\documentclass[11pt]{amsart}
\usepackage{amscd,amsmath,amssymb,amsfonts,verbatim}
\usepackage[cmtip, all]{xy}

\setlength{\textwidth}{5.8in}             % Space saving measures
\setlength{\textheight}{9.0in}
\setlength{\topmargin}{-0.1in}

\setlength{\oddsidemargin}{.25in}
\setlength{\evensidemargin}{.25in}

%Remove in the final version
%\usepackage[color, notref, notcite]{showkeys}     % refs and labels
%\definecolor{refkey}{gray}{.5}   % graylevel for refs
%\definecolor{labelkey}{gray}{.5} % graylevel for labels
%\definecolor{Red}{rgb}{1,0,0}

\newtheorem{thm}{Theorem}[section]
\newtheorem{prop}[thm]{Proposition}
\newtheorem{lem}[thm]{Lemma}
\newtheorem{cor}[thm]{Corollary}

\newtheorem{ques}[thm]{Question}
\theoremstyle{definition}

\newtheorem{defn}[thm]{Definition}
\theoremstyle{remark}
\newtheorem{remk}[thm]{Remark}
\newtheorem{remks}[thm]{Remarks}

\newtheorem{exm}[thm]{Example}
\newtheorem{exms}[thm]{Examples}
\newtheorem{notat}[thm]{Notation}
\numberwithin{equation}{section}

{\hfill$\square$\end{defn}}
{\hfill$\square$\end{remk}}
{\hfill$\square$\end{remks}}
{\hfill$\square$\end{exm}}
{\hfill$\square$\end{exms}}
{\hfill$\square$\end{notat}}

\newcommand{\thmref}{Theorem~\ref}
\newcommand{\propref}{Proposition~\ref}
\newcommand{\corref}{Corollary~\ref}

\newcommand{\lemref}{Lemma~\ref}

\newcommand{\sC}{{\mathcal C}}

\newcommand{\sF}{{\mathcal F}}
\newcommand{\sG}{{\mathcal G}}

\newcommand{\sI}{{\mathcal I}}

\newcommand{\sK}{{\mathcal K}}

\newcommand{\sO}{{\mathcal O}}

\newcommand{\sR}{{\mathcal R}}

\newcommand{\sU}{{\mathcal U}}

\newcommand{\sZ}{{\mathcal Z}}

\newcommand{\C}{{\mathbb C}}

\newcommand{\F}{{\mathbb F}}

\renewcommand{\P}{{\mathbb P}}

\newcommand{\Z}{{\mathbb Z}}

\newcommand{\fm}{{\mathfrak m}}

\newcommand{\CH}{{\rm CH}}

\newcommand{\surj}{\twoheadrightarrow}
\newcommand{\inj}{\hookrightarrow}

\newcommand{\Pic}{{\rm Pic}}

\newcommand{\Spec}{{\rm Spec \,}}

\newcommand{\ds}{{/\kern-3pt/}}

\newcommand{\ov}{\overline}

\renewcommand{\dim}{\text{\rm dim}}

\newcommand{\tuborg}{\left\{\begin{array}{ll}}
\newcommand{\sluttuborg}{\end{array}\right.}

\newcommand{\wt}{\widetilde}
\newcommand{\wh}{\widehat}

\begin{document}
\title[0-cycles and class field theory]{0-cycles on singular schemes and class 
field theory}
\author{Amalendu Krishna}
\address{School of Mathematics, Tata Institute of Fundamental Research,  
1 Homi Bhabha Road, Colaba, Mumbai, India}
\email{amal@math.tifr.res.in}
%\address{Department of Mathematical Sciences, KAIST, 291 Daehak-ro Yuseong-gu, 
%Daejeon, 305-701, Republic of Korea (South)}
%\email{jinhyun@mathsci.kaist.ac.kr; jinhyun@kaist.edu}

\keywords{algebraic cycles, class field theory, singular schemes}

\subjclass[2010]{Primary 14C25; Secondary 14F30, 14G40}

\maketitle

%\tableofcontents

\begin{abstract}
We show that the Chow group of 0-cycles on a singular projective scheme $X$
over a finite field $k$ describes the abelian extensions of its function field
which are unramified over $X_{\rm sm}$. As a consequence, we obtain the
Bloch-Quillen formula for the Chow group of 0-cycles on such
schemes. We deduce simple proofs of results of Kerz-Saito for a 
class of surfaces without any assumption on ${\rm char}(k)$.
\end{abstract} 

\section{Introduction}
The aim of the class field theory in the geometric case is to describe
abelian extensions of a finitely generated field extension of a finite field in
terms of motivic invariants.
One knows that a certain class of the Galois extensions of the function field 
of a normal variety can be described in terms of the finite {\'e}tale covers 
of the same normal variety. Hence the problem of class field theory for 
normal varieties reduces to describing the abelianized {\'e}tale fundamental 
group of a normal variety in terms of its motivic invariants. 

Let $k$ be a finite field of order $q = p^a$ for a prime number $p$.
Let $X$ be a geometrically connected quasi-projective scheme over $k$ and let
$\ov{X}$ denote its base change to a fixed algebraic closure $\ov{k}$.
Then there is a short exact sequence of profinite groups:
\begin{equation}\label{eqn:Fund}
0 \to \pi^{\rm ab}_1(X)^0 \to \pi^{\rm ab}_1(X) \to \wh{\Z} \to 0.
\end{equation}

Let $X$ be a smooth projective connected scheme over $k$.
Let $\CH_0(X)$ denote the Chow group of 0-cycles 
and let $A_0(X)$ denote the kernel of the degree map 
${\rm deg}_X: \CH_0(X) \to \Z$.

The following is the main theorem of the geometric case of the class field 
theory for smooth projective schemes. The $d =1$ case was
earlier proven by Artin.
 
\begin{thm}$($\cite[Theorem~1]{KS}$)$\label{thm:CFT-KS}
Let $X$ be a smooth projective geometrically integral scheme over $k$.
Then the map $\theta^0_X: A_0(X) \to \pi^{\rm ab}_1(X)^0 $ is an isomorphism of 
finite groups.
\end{thm}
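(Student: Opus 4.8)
The plan is to produce the reciprocity map $\theta^0_X$ explicitly on $0$-cycles, show it descends to rational equivalence by bootstrapping from the classical $d=1$ case, and then prove bijectivity via separate surjectivity and injectivity arguments; essentially all of the difficulty is concentrated in injectivity. For the construction: at a closed point $x\in X$ the residue field $k(x)$ is finite over $k$, so $\Gal(\ov{k(x)}/k(x))\cong\wh\Z$ is topologically generated by the Frobenius substitution, and we let $F_x\in\pi^{\rm ab}_1(X)$ be the image of that generator under $\pi_1(\Spec k(x))\to\pi_1(X)\to\pi^{\rm ab}_1(X)$. Then $\theta_X\bigl(\sum_x n_x[x]\bigr)=\sum_x n_xF_x$ is a homomorphism from the group of $0$-cycles to $\pi^{\rm ab}_1(X)$ that sends a cycle of degree $m$ to an element mapping to $m\in\wh\Z$, hence carries degree-$0$ cycles into $\pi^{\rm ab}_1(X)^0$. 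To see that rational equivalences die, one reduces to the normalization of the curves involved: it suffices to check that for an integral normal projective curve $C$ with a finite map $\nu\colon C\to X$ one has $\theta_X\circ\nu_*=\nu_*\circ\theta_C$ (functoriality of $\pi_1$ plus compatibility of Frobenius classes under proper push-forward of $0$-cycles) and that $\theta_C$ kills $\Div(f)$ for $f\in k(C)^\times$; the latter is exactly Artin's reciprocity law, i.e. the $d=1$ case already quoted. This produces $\theta^0_X\colon A_0(X)\to\pi^{\rm ab}_1(X)^0$.

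Next I would dispose of finiteness and surjectivity. That $\pi^{\rm ab}_1(X)^0$ is finite follows, away from $p$, from the Hochschild--Serre sequence for $\pi_1(\ov X)\to\pi_1(X)\to\Gal(\ov k/k)$ and the fact that geometric Frobenius acts on $H^1_{\et}(\ov X,\Q_\ell)$ with eigenvalues of absolute value $\sqrt q\neq 1$ (Weil's theorem, via the Albanese), so $1-\mathrm{Frob}$ is invertible and the Frobenius-coinvariants of the $\ell$-adic Tate module of $\pi^{\rm ab}_1(\ov X)$ are finite; the $p$-part is handled the same way using Artin--Schreier--Witt theory and the unit-root part of the first crystalline cohomology. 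Granting this, surjectivity of $\theta^0_X$ is a standard consequence of Chebotarev density for the function field $k(X)$: the classes $F_x$ topologically generate $\pi^{\rm ab}_1(X)$ (a nontrivial finite abelian {\'e}tale cover is split only on a set of closed points of density $<1$), and since $\pi^{\rm ab}_1(X)^0$ is finite and $X$ has closed points of degrees with greatest common divisor $1$, a small amount of bookkeeping upgrades this to surjectivity of $\theta^0_X$ onto $\pi^{\rm ab}_1(X)^0$.

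The remaining point, injectivity of $\theta^0_X$, is the hard one, and I would attack it by reducing to curves. Given $0\neq\alpha\in A_0(X)$, the goal is a finite abelian {\'e}tale cover of $X$ in which $\alpha$ fails to split. Using a Bertini theorem valid over the finite field $k$ --- of Poonen type, producing a smooth projective curve $C\hookrightarrow X$ as an iterated hyperplane section passing through a prescribed finite set of closed points, suitably refined --- one arranges $\Supp(\alpha)\subseteq C$, so that $\alpha=\nu_*\beta$ for some $\beta\in A_0(C)$, together with, via a Lefschetz theorem for abelian fundamental groups, surjectivity of $\pi^{\rm ab}_1(C)\to\pi^{\rm ab}_1(X)$ with sufficiently controlled kernel. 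One then feeds this into the commutative square relating $\theta_C$ and $\theta_X$ and uses Artin's theorem that $\theta^0_C\colon A_0(C)\to\pi^{\rm ab}_1(C)^0$ is an isomorphism: a class annihilated by $\theta^0_X$ is a universal norm along the abelian covers of $C$ that extend to $X$, and the Lefschetz control shows these already force $\beta=0$, whence $\alpha=\nu_*\beta=0$. A more homological alternative is to invoke the Bloch--Quillen formula $\CH_0(X)\cong H^{d}_{\Nis}(X,\mathcal K^M_{d})$ with $d=\dim X$ together with Kato--Saito's higher unramified class field theory, which identifies the right-hand side with $\pi^{\rm ab}_1(X)$ compatibly with $\theta_X$; injectivity then follows once $A_0(X)$ is known to be finite, so that nothing is lost to profinite completion. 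Either way the essential obstacle is the passage to curves --- proving the Bertini statement over a finite field and controlling the kernel of $\pi^{\rm ab}_1(C)\to\pi^{\rm ab}_1(X)$ precisely enough --- which is where the real work of the theorem lies.
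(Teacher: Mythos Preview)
The paper does not supply a proof of this statement: \thmref{thm:CFT-KS} is quoted verbatim as \cite[Theorem~1]{KS} and used as a black box throughout (for instance in the proof of \thmref{thm:CFT-Sing} and \corref{cor:finite-Chow}). There is therefore no in-paper argument to compare your proposal against.

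That said, your outline is a reasonable reconstruction of how the original Kato--Saito proof proceeds, and your ``homological alternative'' via $H^d(X,\sK^M_d)$ is in fact closer to the machinery the present paper leans on (see \propref{prop:Milnor-res-*} and the appeal to \cite[Theorem~6.2]{Raskind} in the proof of \thmref{thm:B-formula}). One caution about your first injectivity strategy: the Lefschetz step does give surjectivity of $\pi^{\rm ab}_1(C)\to\pi^{\rm ab}_1(X)$, but the kernel is governed by the Jacobian of $C$ modulo the Albanese of $X$, and your phrase ``sufficiently controlled kernel'' is hiding exactly the content of the theorem --- one must show that $\beta$ being a universal norm for the covers of $C$ pulled back from $X$ forces $\beta$ to be rationally equivalent to zero \emph{on $X$}, not on $C$, and this is not a formal consequence of the square you wrote down. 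The paper's own inductive argument for the singular case (see \S\ref{sec:Final}) sidesteps this by choosing hypersurface sections for which $\pi^{\rm ab}_1(Y)^0\to\pi^{\rm ab}_1(U)^0$ is an \emph{isomorphism} (via \cite[Expos{\'e}~XII, Corollaire~3.5]{SGA-2}), which only works once $\dim Y\ge 2$; the base case $d=2$ there is precisely the input from \cite{KS} and \cite{KS-2}, not something reproved.
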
 

\subsection{Main results}
Let us now assume that $X$ is a geometrically integral projective scheme
over $k$ which is not necessarily smooth. 
Let $\CH_0(X)$ denote the Chow group of 0-cycles in the sense 
of Levine-Weibel \cite{LW} (see below). Let $A_0(X)$ denote the kernel of the
degree map ${\rm deg}_X: \CH_0(X) \to \Z$. We shall say that $X$ has only
isolated singularities if the singular locus $X_{\rm sing}$ of $X$ is finite.
Let $j: X_{\rm reg} \inj X$ denote the inclusion of the regular locus of $X$.
We prove the following result in this note.

\begin{thm}\label{thm:CFT-Sing}
Let $X$ be a geometrically integral projective scheme
over $k$ of dimension $d \ge 2$ which is regular in codimension one.
Then there exists a reciprocity map 
\[
\theta_X: \CH_0(X) \to \pi^{\rm ab}_1(X_{\rm reg})
\]
which restricts to a map $\theta^0_X: A_0(X) \to \pi^{\rm ab}_1(X_{\rm reg})^0$.
The map $\theta_X$ is injective with dense image and $\theta^0_X$ is an 
isomorphism of finite groups, if $X$ has only isolated singularities.
\end{thm}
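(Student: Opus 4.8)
The plan is to build $\theta_X$ directly on the level of $0$-cycles, deduce well-definedness and density from the $d=1$ case of \thmref{thm:CFT-KS} (Artin's theorem) together with Chebotarev density, and then --- in the isolated-singularities case --- obtain injectivity and the finiteness statements by a reduction to smooth curves.

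\emph{Construction of $\theta_X$.} Write $Z_0(X_{\rm reg})$ for the free abelian group on the closed points of $X_{\rm reg}$, so that $\CH_0(X)$ is a quotient of $Z_0(X_{\rm reg})$. A closed point $x \in X_{\rm reg}$ induces $\Spec k(x) \to X_{\rm reg}$, hence $\pi^{\rm ab}_1(\Spec k(x)) = \Gal(\ov{k}/k(x)) \to \pi^{\rm ab}_1(X_{\rm reg})$; I would set $F_x$ to be the image of the Frobenius and let $\theta_X(x) = F_x$. To check that this descends to $\CH_0(X)$, take a Cartier curve $C \subset X$ --- reduced, with no component in $X_{\rm sing}$, and locally principal along the finite set $C \cap X_{\rm sing}$ --- and a rational function $f$ on $C$ that is regular and a unit near $C \cap X_{\rm sing}$, so that ${\rm div}_X(f)$ is supported on $C \cap X_{\rm reg}$. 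Passing to the normalization $\pi\colon C^N \to C$ (a smooth projective curve over $k$), the function $g := \pi^{*}f$ has divisor avoiding $\pi^{-1}(C \cap X_{\rm sing})$, the map $\phi\colon \pi^{-1}(C \cap X_{\rm reg}) \to X_{\rm reg}$ is a morphism of smooth curves, and $\phi_{*}({\rm div}(g)) = {\rm div}_X(f)$ as $0$-cycles on $X_{\rm reg}$. On each component, the composite $k(C^N)^{\times} \to Z_0(C^N) \to \pi^{\rm ab}_1(C^N)$ (the second map sending $[y] \mapsto F_y$) is zero --- this is Weil reciprocity, equivalently the statement that $Z_0(C^N) \to \pi^{\rm ab}_1(C^N)$ factors through $\Pic(C^N)$, i.e.\ the content of the theory in dimension one --- so by functoriality of $\pi^{\rm ab}_1$ we get $\theta_X({\rm div}_X(f)) = 0$. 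Thus $\theta_X\colon \CH_0(X) \to \pi^{\rm ab}_1(X_{\rm reg})$ is well defined. A degree-one closed point maps to a lift of $1$ under $\pi^{\rm ab}_1(X_{\rm reg}) \to \pi^{\rm ab}_1(\Spec k) = \wh{\Z}$, so $\theta_X$ is compatible with the degree maps and restricts to $\theta^0_X$ on the degree-zero subgroups appearing in \eqref{eqn:Fund} (applied to $X_{\rm reg}$, which is geometrically integral over $k$).

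\emph{Dense image.} Any open subgroup of $\pi^{\rm ab}_1(X_{\rm reg})$ contains the kernel of a continuous surjection onto a finite abelian group $G$, corresponding to a connected abelian étale $G$-cover $Y \to X_{\rm reg}$. By the Chebotarev density theorem for the variety $X_{\rm reg}/k$ and this cover, every element of $G$ is realized as $F_x$ for some closed point $x \in X_{\rm reg}$; hence $\theta_X$ surjects onto each such $G$ and so has dense image, and likewise $\theta^0_X$ has dense image in $\pi^{\rm ab}_1(X_{\rm reg})^0$.

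\emph{The isolated-singularities case.} Assume now $X_{\rm sing}$ is finite, so $X_{\rm reg}$ has complement of codimension $\geq 2$ in $X$. By the moving lemma for the Levine--Weibel Chow group (the Bertini argument of \cite{LW}, in a form valid over a finite field) I would reduce to $0$-cycles supported on smooth projective curves $C \subset X_{\rm reg}$, realizing $\CH_0(X)$ as the quotient of $\bigoplus_C \CH_0(C)$ by the relations coming from Cartier curves with smooth components in $X_{\rm reg}$. On the other hand $X_{\rm reg}$ is smooth over $k$, so its class field theory, in the curve-wise (Wiesend--Kerz--Saito) style, presents $\pi^{\rm ab}_1(X_{\rm reg})$ as a quotient of $\bigoplus_C \pi^{\rm ab}_1(C)$ by explicit relations. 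Since $\theta_C\colon A_0(C) \to \pi^{\rm ab}_1(C)^0$ is an isomorphism of finite groups (\thmref{thm:CFT-KS} with $d=1$), and these isomorphisms are compatible with push-forward along inclusions of curves into $X_{\rm reg}$, everything comes down to matching the two systems of relations. Granting that, $\theta^0_X$ is injective. For finiteness of $\pi^{\rm ab}_1(X_{\rm reg})^0$: since $X_{\rm sing}$ is $0$-dimensional, a degree-zero connected abelian étale cover of $X_{\rm reg}$ extends to a finite normal cover of $X$ ramified only over the finite set $X_{\rm sing}$; slicing by general hyperplanes down to a normal surface, or comparing with a modification whose centre is $0$-dimensional, one bounds $\pi^{\rm ab}_1(X_{\rm reg})^0$ in terms of the smooth projective case \thmref{thm:CFT-KS}, so it is finite. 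Injectivity of $\theta^0_X$ then forces $A_0(X)$ to be finite as well, and the dense image makes $\theta^0_X$ surjective; hence $\theta^0_X$ is an isomorphism of finite groups and, via the degree sequences, $\theta_X$ is injective with dense image.

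\emph{Main obstacle.} I expect the real work to lie in the comparison step: identifying the curve-wise relations presenting $\pi^{\rm ab}_1(X_{\rm reg})$ with the Levine--Weibel rational equivalences on the \emph{singular} scheme $X$, together with the finiteness of $\pi^{\rm ab}_1(X_{\rm reg})^0$. In effect one must show that the singularities, being of codimension $\geq 2$, are ``invisible'' to both constructions in a mutually compatible way --- a moving lemma for $0$-cycles on $X$ played against a Lefschetz-type statement for the fundamental group of $X_{\rm reg}$. The hypothesis ``regular in codimension one'' is what identifies $\pi^{\rm ab}_1(X_{\rm reg})$ with the Galois group of the maximal abelian extension of $k(X)$ unramified over $X_{\rm reg}$, while ``isolated singularities'' is what makes both the finiteness and the relation-matching go through.
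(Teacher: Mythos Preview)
Your construction of $\theta_X$ has a genuine gap. When a Cartier curve $C$ meets $X_{\rm sing}$, the normalization $C^N$ does not map to $X_{\rm reg}$; only the open subset $V := \pi^{-1}(C\cap X_{\rm reg})$ does. Weil reciprocity tells you that the Frobenius sum $\sum n_y F_y$ vanishes in $\pi^{\rm ab}_1(C^N)$, but what you need is vanishing after pushing forward along $\phi_*\colon \pi^{\rm ab}_1(V)\to\pi^{\rm ab}_1(X_{\rm reg})$, and there is no map $\pi^{\rm ab}_1(C^N)\to\pi^{\rm ab}_1(X_{\rm reg})$ through which to factor. The element $\sum n_y F_y$ typically does \emph{not} vanish in $\pi^{\rm ab}_1(V)$ (open curves have many more abelian covers), so ``functoriality of $\pi^{\rm ab}_1$'' does not bridge the gap. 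In the isolated-singularities case you could repair this by first proving a moving lemma to the effect that rational equivalence is generated by curves contained in $X_{\rm reg}$, but that is itself nontrivial over a finite field, and for the bare regular-in-codimension-one hypothesis it is not available.

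The paper's route is quite different. For $d=2$ it does \emph{not} argue curve-wise: it identifies $\CH_0(X)$ with $H^2(X,\sK^M_{2,X})$ (Propositions~\ref{prop:Milnor-res-*}, \ref{prop:Surface-0}, Corollary~\ref{cor:Surface-0-*1}) and then invokes the Kato--Saito id\`ele-theoretic reciprocity map $\varprojlim_S H^2(X,\sK^M_{2,(X,S)})\to\pi^{\rm ab}_1(U)$ from \cite{KS-2}, checking agreement with $\theta_X$ on closed points. For $d\ge 3$ it proves well-definedness by induction on $d$: given $C\subset U$, it blows up points of $U$ until the strict transform $C'$ is smooth, and then uses Poonen's Bertini theorem over finite fields \cite{Poonen-3} to trap $C'$ inside a hypersurface section $H$ which is again regular in codimension one; the inductive hypothesis on $H$ (plus Lemma~\ref{lem:Rec-Cov}) then kills ${\rm div}(f)$.

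For the isomorphism and finiteness, your ``match the two systems of relations'' program is replaced in the paper by a single Lefschetz step: Poonen's Bertini (Theorem~\ref{thm:Poonen}) produces a smooth geometrically integral hypersurface section $Y\subset U$ of the normalization containing the support of a given cycle, and \cite[Expos\'e~XII, Cor.~3.5]{SGA-2} together with Zariski--Nagata purity give $\pi^{\rm ab}_1(Y)^0\xrightarrow{\sim}\pi^{\rm ab}_1(U)^0$. Injectivity, surjectivity and finiteness of $\theta^0_X$ then all come at once from the smooth projective case \cite{KS}. Your Wiesend-style relation-matching could in principle be made to work, but it is a substantially harder and less direct path than the Bertini/Lefschetz reduction the paper uses.
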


The above result extends \thmref{thm:CFT-KS} to schemes with isolated
singularities and improves the main result of \cite{SS}, where the
tame quotient of $\pi^{\rm ab}_1(X_{\rm reg})$ was described using a
quotient of $A_0(X)$. We shall show using an example that it is 
necessary to assume regularity in codimension one in order to
construct the reciprocity map $\theta^0_X$ and prove its isomorphism.
As an immediate consequence of ~\eqref{eqn:Fund} and \thmref{thm:CFT-Sing},
we get the following.
\begin{cor}\label{cor:Chow-disc}
Let $X$ be a geometrically integral projective scheme of dimension
$d \ge 2$ over a finite field. Assume that $X$ has only isolated
singularities. Assume further that $\ov{X}_{\rm reg}$ is simply connected.
Then $\CH_0(X) \xrightarrow{\simeq} \Z$.
\end{cor}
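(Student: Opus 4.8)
\emph{Proof strategy.} The plan is to derive \corref{cor:Chow-disc} directly from \thmref{thm:CFT-Sing}, reducing it to two standard facts about geometrically integral varieties over finite fields. First one checks that the hypotheses of \thmref{thm:CFT-Sing} hold: since $\dim X = d \ge 2$ and $X_{\rm sing}$ is finite, the singular locus has codimension $\ge 2$, so $X$ is regular in codimension one. Hence \thmref{thm:CFT-Sing} applies and, as $X$ has isolated singularities, yields an isomorphism of finite groups
\[
\theta^0_X \colon A_0(X) \xrightarrow{\simeq} \pi^{\rm ab}_1(X_{\rm reg})^0 .
\]
It therefore remains to show that $\pi^{\rm ab}_1(X_{\rm reg})^0 = 0$ and that the degree map ${\rm deg}_X \colon \CH_0(X) \to \Z$ is surjective.

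For the vanishing of $\pi^{\rm ab}_1(X_{\rm reg})^0$, observe that $X$ being geometrically integral makes $\ov{X}$ integral, and since $k$ is perfect we have $X_{\rm reg} = X_{\rm sm}$, a locus which commutes with the base change to $\ov{k}$; thus $\ov{X}_{\rm reg}$ is a nonempty open subscheme of the integral scheme $\ov{X}$, hence integral, hence connected. Consequently $X_{\rm reg}$ is geometrically connected over $k$, and the homotopy exact sequence
\[
\pi_1(\ov{X}_{\rm reg}) \to \pi_1(X_{\rm reg}) \to \Gal(\ov{k}/k) \to 1,
\]
together with the hypothesis $\pi_1(\ov{X}_{\rm reg}) = 1$, shows that $\pi_1(X_{\rm reg}) \xrightarrow{\simeq} \Gal(\ov{k}/k) \cong \wh{\Z}$. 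In particular $\pi^{\rm ab}_1(X_{\rm reg}) \cong \wh{\Z}$ and the projection $\pi^{\rm ab}_1(X_{\rm reg}) \to \wh{\Z}$ of~\eqref{eqn:Fund} applied to $X_{\rm reg}$ is an isomorphism, so its kernel $\pi^{\rm ab}_1(X_{\rm reg})^0$ vanishes. Combined with the isomorphism $\theta^0_X$ above this gives $A_0(X) = 0$, i.e. ${\rm deg}_X$ is injective.

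For surjectivity of ${\rm deg}_X$, recall that $0$-cycles in the Levine--Weibel group are supported on the regular locus $X_{\rm reg}$, which is itself a geometrically integral variety over the finite field $k$. By the Lang--Weil estimates $\# X_{\rm reg}(\F_{q^n}) = q^{nd} + O(q^{n(d - 1/2)})$, and Möbius inversion over the divisors of $n$ shows that the number of closed points of $X_{\rm reg}$ of degree exactly $n$ is $\sim q^{nd}/n$, hence positive for all $n \gg 0$. Taking two consecutive such integers, the gcd of the degrees of closed points of $X_{\rm reg}$ equals $1$, so ${\rm deg}_X$ is surjective. Together with injectivity this proves $\CH_0(X) \xrightarrow{\simeq} \Z$.

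The argument carries essentially no difficulty beyond \thmref{thm:CFT-Sing} itself, which is the real content of the paper; for the Corollary, the only points that require care are the (routine) verification that isolated singularities in dimension $\ge 2$ force regularity in codimension one, so that the theorem is applicable, and the extraction of $\pi^{\rm ab}_1(X_{\rm reg})^0 = 0$ from the simple-connectedness hypothesis via the homotopy exact sequence. I do not anticipate any genuine obstacle.
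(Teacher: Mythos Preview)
Your proposal is correct and follows essentially the same route the paper intends: the paper presents the corollary as an ``immediate consequence of~\eqref{eqn:Fund} and \thmref{thm:CFT-Sing}'', and your argument is exactly the unpacking of that claim. The only cosmetic difference is in the surjectivity of the degree map, which the paper handles (just before \thmref{thm:Main-Surf}) by choosing a smooth curve $C \subset X_{\rm reg}$ and invoking Lang's density theorem, whereas you apply Lang--Weil directly to $X_{\rm reg}$; both are standard and equivalent in spirit.
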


Let ${\rm Alb}_X$ denote the Albanese variety of $X$ (see \cite[Chapter~2,
\S~3]{Lang-2}). As another consequence of \thmref{thm:CFT-Sing},
one obtains the following.

\begin{cor}\label{cor:finite-Chow}
Let $X$ be a geometrically integral projective scheme of dimension
$d \ge 1$ over a finite field. Assume that $X$ has only isolated
singularities. Then $AJ_X: A_0(X) \to {\rm Alb}_X(k)$ is a surjective map
of finite groups.
\end{cor}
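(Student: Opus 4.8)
The plan is to reduce the statement for $d\ge 2$ to \thmref{thm:CFT-Sing} together with the class field theory of abelian varieties over finite fields, and to dispose of the curve case $d=1$ separately by classical means. Finiteness of the two groups is immediate: $\Alb_X$ is of finite type over the finite field $k$, so $\Alb_X(k)$ is finite, and $A_0(X)$ is finite by \thmref{thm:CFT-Sing} when $d\ge 2$, while for $d=1$ one has $A_0(X)\cong\Pic^0_{X/k}(k)$ (the Brauer obstruction vanishes since $\Br(k)=0$), which is finite as well. For $d=1$ the scheme $X$ is genuinely not regular in codimension one, so \thmref{thm:CFT-Sing} does not apply; instead, writing $\nu\colon\wt X\to X$ for the normalization, Lang's Albanese variety of $X$ is $\Alb_X=\Pic^0_{\wt X/k}$ and $AJ_X$ is the pullback $\nu^*\colon\Pic^0_{X/k}(k)\to\Pic^0_{\wt X/k}(k)$. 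This fits into an exact sequence $0\to L\to\Pic^0_{X/k}\to\Alb_X\to 0$ of smooth connected commutative group schemes over $k$ in which $L$, the local contribution of the singular points, is linear; Lang's theorem gives $H^1(k,L)=0$, hence the surjectivity of $\nu^*$ on $k$-rational points.

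Now suppose $d\ge 2$. Since $X$ has isolated singularities we have $\dim X_{\rm sing}=0<d-1$, so $X$ is regular in codimension one and \thmref{thm:CFT-Sing} provides an isomorphism of finite groups $\theta^0_X\colon A_0(X)\xrightarrow{\simeq}\pi^{\rm ab}_1(X_{\rm reg})^0$. It thus suffices to produce a surjection $\rho\colon\pi^{\rm ab}_1(X_{\rm reg})^0\surj\Alb_X(k)$ with $\rho\circ\theta^0_X=AJ_X$. Let $a\colon X_{\rm reg}\to\Alb_X$ be the Albanese morphism; it is defined on all of $X_{\rm reg}$ because a rational map from a smooth variety to an abelian variety is everywhere defined. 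Functoriality of the reciprocity map along $a$ gives the identity $\theta^0_{\Alb_X}\circ a_*=a_*\circ\theta^0_X$, where in the left-hand composite $a_*$ is the push-forward of $0$-cycles and in the right-hand one it is the map induced by $a$ on $\pi^{\rm ab}_1$. For the abelian variety $\Alb_X$ there is the classical isomorphism $\pi^{\rm ab}_1(\Alb_X)^0\cong\Alb_X(k)$ under which $\theta^0_{\Alb_X}$ becomes the canonical summation map $AJ_{\Alb_X}$, and the universal property of $\Alb_X$ yields $AJ_{\Alb_X}\circ a_*=AJ_X$. Therefore the composite $\rho$ of $a_*$ with this isomorphism satisfies $\rho\circ\theta^0_X=AJ_X$.

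It remains to see that $\rho$ is surjective. Lang's Albanese variety of the projective scheme $X$ is generated as an algebraic group by the image of $a$, so the induced homomorphism $\pi^{\rm ab}_1(\ov{X}_{\rm reg})\to\pi^{\rm ab}_1(\Alb_{\ov X})=\wh T(\Alb_{\ov X})$ of geometric abelianized fundamental groups is surjective; surjectivity is preserved upon passing to $\Gal(\ov k/k)$-coinvariants, which compute $\pi^{\rm ab}_1(X_{\rm reg})^0$ and $\pi^{\rm ab}_1(\Alb_X)^0\cong\Alb_X(k)$ respectively, so $\rho$, and hence $AJ_X$, is onto. The main obstacle is not this last formal step but the two inputs I am using as black boxes: first, the functoriality of the reciprocity map $\theta_X$ of \thmref{thm:CFT-Sing} along the Albanese morphism $a$, in particular its compatibility with push-forward of $0$-cycles given that $a$ extends only to a rational map on $X$; and second, the identification, with the correct normalization, of the reciprocity map of an abelian variety over $k$ with the tautological isomorphism $\pi^{\rm ab}_1(A)^0\cong A(k)$. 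The remaining facts about Lang's Albanese of a singular projective scheme---that it is an abelian variety, generated by the image of $X_{\rm reg}$, with $\pi^{\rm ab}_1$ of its base change equal to the Tate module of the geometric Albanese---are standard, and with all of this in place the argument above is formal.
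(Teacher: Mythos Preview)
Your argument is correct, but it follows a genuinely different route from the paper. The paper proceeds by induction on $d$: for $d=1$ it factors $AJ_X$ through the normalization; for $d=2$ it passes to the normalization (via \propref{prop:Surface-0}) and then to a resolution $\wt X\to X$, reducing to the surjectivity of $A_0(\wt X)\to\Alb_X(k)$ in the smooth case, which is \cite[Proposition~9]{KS}; for $d\ge 3$ it takes a good hypersurface section $Y\subset X$ as in the proof of \thmref{thm:CFT-Sing}, uses that $A_0(Y)\to A_0(X)$ and $\Alb_Y\to\Alb_X$ are both isomorphisms, and invokes induction. Your approach instead uses \thmref{thm:CFT-Sing} once to identify $A_0(X)$ with $\pi^{\rm ab}_1(X_{\rm reg})^0$ for all $d\ge 2$, and then transfers everything to the Albanese side via the morphism $a:X_{\rm reg}\to\Alb_X$ together with Lang's isomorphism $\pi^{\rm ab}_1(\Alb_X)^0\cong\Alb_X(k)$. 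This is more direct and avoids the Bertini/Lefschetz induction entirely; on the other hand it imports two outside ingredients (compatibility of reciprocity with $a_*$, and the explicit description of $\theta^0$ for an abelian variety) that the paper's argument sidesteps by reducing to the smooth projective case and citing \cite{KS}. The one point worth tightening in your write-up is the push-forward $a_*:A_0(X)\to A_0(\Alb_X)$: this is fine because for $X$ with isolated singularities the Cartier curves generating $\sR_0(X)$ may be taken to lie entirely in $X_{\rm reg}$ and are therefore projective, so $a$ restricts to a proper morphism on each of them; you note the issue but could make this explicit rather than flagging it as a black box.
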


%\vskip .3cm

\subsection{Results of Kerz-Saito}
As further application of \thmref{thm:CFT-Sing}, 
we obtain simple proofs of the main results (Theorems~II and III)
of \cite{KeS} for the following  class of smooth surfaces 
without any assumption on the characteristic of $k$.

Let $X$ be a normal projective scheme over $k$ and let $U \inj X$ be an open 
subset which is smooth over $k$. Given an effective Cartier divisor
$D \subsetneq X$ with $|D| \subseteq X \setminus U$, let $\CH_0(X,D)$ denote 
the Chow group of 0-cycles on $X$ with modulus $D$ in the sense of \cite{KeS}.
Let $C(U) = {\underset{D}\varprojlim} \ \CH_0(X,D)$, where the
limit is taken over all effective Cartier divisors which are disjoint from $U$.
It is known that $C(U)$ depends only on $U$ and not on the choice of its
compactification $X$ (see \cite[Lemma~3.1]{KeS}).
Let $C_0(U)$ denote the kernel of the degree map $C(U) \to \Z$.

\begin{thm}\label{thm:KES-surf}
Let $U$ be the regular locus of a normal projective surface over $k$.
Then there is a reciprocity map $C(U) \to \pi^{\rm ab}(U)$ which 
induces an isomorphism of finite groups
$\rho_U: C_0(U) \xrightarrow{\simeq} \pi^{\rm ab}_1(U)^0$.
\end{thm}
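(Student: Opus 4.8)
The plan is to deduce the theorem from \thmref{thm:CFT-Sing} applied to the normal projective surface $X$ of which $U$ is the regular locus, after identifying $C(U)$ with the Levine--Weibel Chow group $\CH_0(X)$. Being a normal surface, $X$ is regular in codimension one and its singular locus $X\setminus U = X_{\rm sing}$ is finite, so $X$ has only isolated singularities; replacing $k$ by the algebraic closure of $k$ in $k(X)$ we may also assume $X$ is geometrically integral, which changes neither $C_0(U)$ nor $\pi^{\rm ab}_1(U)^0$. Hence \thmref{thm:CFT-Sing} supplies a reciprocity map $\theta_X\colon \CH_0(X)\to \pi^{\rm ab}_1(U)$, injective with dense image, whose restriction $\theta^0_X\colon A_0(X)\to \pi^{\rm ab}_1(U)^0$ is an isomorphism of finite groups. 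It therefore suffices to produce an isomorphism $C(U)\xrightarrow{\ \simeq\ }\CH_0(X)$ that is compatible with the degree maps and carries the Kerz--Saito reciprocity map to $\theta_X$.

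To construct this identification I would first choose a resolution of singularities $f\colon\wt{X}\to X$; such an $f$ exists in every characteristic by Lipman's theorem for excellent two-dimensional schemes, and it is this input --- rather than resolution or alterations in higher dimensions --- that lets us avoid any hypothesis on $\Char(k)$. Since $f$ is an isomorphism over $U$, the smooth projective surface $\wt{X}$ is a compactification of $U$ with boundary $\wt{X}\setminus U = |E|$, where $E$ is the reduced exceptional divisor. The effective Cartier divisors on $\wt{X}$ disjoint from $U$ are exactly those supported on $|E|$, so $\{nE\}_{n\ge 1}$ is cofinal among them, and by the independence of $C(U)$ from the compactification we get $C(U)=\varprojlim_n \CH_0(\wt{X},nE)$. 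The crux is then the identification
\[
\varprojlim_n \CH_0(\wt{X},nE)\ \xrightarrow{\ \simeq\ }\ \CH_0(X),
\]
induced by proper push-forward along $f$: both sides are quotients of the free abelian group on the closed points of $U$ (the regular closed points of $X$), so one must check that a rational function with modulus $nE$ on a curve in $\wt{X}$ pushes forward to a rational function on a Cartier curve in $X$ which is regular and invertible along $X_{\rm sing}$ --- whence $f_*$ is well defined and visibly surjective --- and, conversely, that every Levine--Weibel rational equivalence on $X$ lifts, after pulling back to $\wt{X}$ and passing to a sufficiently large multiple $nE$, to a rational equivalence with modulus. This is the surface case of the comparison between $0$-cycles with modulus and $0$-cycles on singular schemes, and it is where the real work lies: it rests on moving Cartier curves through $X_{\rm sing}$ into good position and on a quantitative bound, in terms of $n$, for the order of vanishing along $E$ of the functions witnessing the equivalences.

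Granting this comparison, the degree maps match (both being induced by $x\mapsto[k(x):k]$ on closed points of $U$), so the isomorphism restricts to $C_0(U)\xrightarrow{\ \simeq\ }A_0(X)$; composing with $\theta^0_X$ shows that the induced $\rho_U\colon C_0(U)\to\pi^{\rm ab}_1(U)^0$ is an isomorphism of finite groups, and that $C(U)\to\pi^{\rm ab}_1(U)$ is a reciprocity map on the full groups. Finally, to see that this map is the Kerz--Saito reciprocity map of \cite{KeS}, it suffices to note that both it and $\theta_X$ send a closed point $x\in U$ to the image of the Frobenius substitution under $\pi^{\rm ab}_1(\Spec k(x))\to\pi^{\rm ab}_1(U)$; since such classes generate and the comparison map is already known to be an isomorphism, the relevant square commutes. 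I expect the principal obstacle to be exactly the injectivity in the limit of the displayed isomorphism --- following a Cartier curve and the defining function of a rational equivalence through the resolution $f$, and determining which multiple $nE$ of the exceptional divisor is forced on it.
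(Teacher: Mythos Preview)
Your global plan coincides with the paper's: reduce to \thmref{thm:CFT-Sing} for the normal projective surface $X$ by identifying $C(U)$ with $\CH_0(X)$ via a resolution $f\colon\wt{X}\to X$ and the cofinality of $\{mE\}_{m\ge 1}$. The reciprocity maps agree on closed points of $U$, so the compatibility check you outline is exactly what the paper does.

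The substantive divergence is in how the identification $\CH_0(X)\xrightarrow{\simeq}\varprojlim_m\CH_0(\wt{X},mE)$ is proven. You propose to argue directly at the level of cycles, moving curves and tracking orders of vanishing. The paper instead routes the comparison through relative $K$-theory: it builds cycle class maps $\CH_0(X)\to F^2K_0(X,mS)$ and $\CH_0(\wt{X},mE)\to F^2K_0(\wt{X},mE)$ sitting in a commutative square over $f^*$, shows (via Pedrini--Weibel) that $\CH_0(X)\xrightarrow{\simeq}F^2K_0(X)$ for surfaces with isolated singularities, and then invokes the main theorem of \cite{KSri} to get $F^2K_0(X)\xrightarrow{\simeq}F^2K_0(\wt{X},mE)$ for $m\gg 0$; a diagram chase finishes. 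The finiteness of $C_0(U)$ is likewise obtained $K$-theoretically (\propref{prop:Nor-fin}) rather than as a formal consequence. What the $K$-theoretic detour buys is precisely the injectivity you flag as the obstacle: it is the content of \cite{KSri}, and a direct cycle-level proof would essentially reprove that result.

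One point to correct in your sketch: the easy direction is $f^*\colon\CH_0(X)\to\CH_0(\wt{X},mE)$, not the push-forward you describe. On a surface with isolated singularities one may take Cartier curves to miss $X_{\rm sing}$ entirely (\lemref{lem:RE}), so their transforms miss $E$ and the modulus condition is vacuous; this gives well-definedness and surjectivity of $f^*$ immediately. By contrast, a curve $C\subset\wt{X}$ meeting $E$ pushes down to a curve through $X_{\rm sing}$, and it is not at all clear that the modulus-$nE$ condition on a function forces it to lie in $\sO^\times_{f(C),\,X_{\rm sing}}$ --- this is the genuinely delicate direction, and it is exactly what the $K$-theory input of \cite{KSri} handles.
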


This result was proven (without finiteness assertion) 
in \cite{KeS} for any smooth surface $U$
over $k$. However, the proof given there has a high level of complexity
and works under the assumption that ${\rm char}(k) \neq 2$. 

\begin{comment}
Recall that a resolution of singularities always exists in dimension two.
Note also that given any smooth projective surface $\wt{X}$ over $k$
and an open subset $U \inj \wt{X}$ whose complement $E$ is a normal 
crossing divisor, we can tell whether $\wt{X}$ is a resolution of
singularities of a projective normal surface by looking at the intersection
matrix of $E$.
\end{comment}

 %\vskip .3cm

\subsection{Bloch-Quillen formula}
As another byproduct of the class field theory for singular projective schemes,
we obtain the following Bloch-Quillen formula for the Chow group of
0-cycles on projective schemes over a finite field which have only isolated
singularities.

\begin{thm}\label{thm:B-formula}
Let $X$ be a geometrically integral projective scheme of dimension
$d$ over a finite field. Assume that $X$ has only isolated
singularities. Then there are canonical isomorphisms
\[
\CH_0(X) \xrightarrow{\simeq} H^d(X, \sK^M_{d,X}) 
\xrightarrow{\simeq} H^d(X, \sK_{d,X}).
\]
\end{thm}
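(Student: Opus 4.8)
\textbf{Proof proposal for Theorem~\ref{thm:B-formula}.}

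The plan is to deduce the Bloch--Quillen formula from the class field theory isomorphism of \thmref{thm:CFT-Sing} together with the corresponding étale-cohomological description of $\pi^{\rm ab}_1(X_{\rm reg})^0$. First I would dispose of the low-dimensional cases: for $d = 0$ the scheme $X$ is a finite set of points over $k$ and all three groups are free of rank equal to the number of points, while for $d = 1$ a projective curve over a finite field is automatically regular in codimension one (indeed smooth away from finitely many closed points), and the statement reduces to the classical identification of $\CH_0$ of a curve with $H^1$ of the units sheaf, i.e. the degree-refined Picard group; the Levine--Weibel Chow group agrees with the usual one here. So assume $d \ge 2$. Note that a scheme with only isolated singularities is automatically regular in codimension one once $d \ge 2$, so \thmref{thm:CFT-Sing} applies and gives $\theta^0_X \colon A_0(X) \xrightarrow{\simeq} \pi^{\rm ab}_1(X_{\rm reg})^0$, an isomorphism of finite groups, together with injectivity of $\theta_X$ with dense image on the full Chow group.

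Next I would construct the cycle class map $\CH_0(X) \to H^d(X,\sK^M_{d,X})$. Since $X$ has isolated singularities, a closed point $x \in X$ of the regular locus defines, via the Gersten-type complex for the Milnor $K$-sheaf, a class in $H^d(X,\sK^M_{d,X})$; one checks that rational equivalence via the Levine--Weibel definition (using $K$-theory, or rather $K_1$, of curves meeting $X_{\rm sing}$ suitably) maps to zero, exactly as in the smooth Bloch--Quillen theory. This produces a homomorphism $\lambda_X \colon \CH_0(X) \to H^d(X,\sK^M_{d,X})$ compatible with degree maps. The second arrow $H^d(X,\sK^M_{d,X}) \to H^d(X,\sK_{d,X})$ is induced by the natural map from Milnor to Quillen $K$-theory sheaves, which is an isomorphism in the relevant degrees on the regular locus and, because the singular locus is zero-dimensional, contributes nothing to $H^d$; a local computation at the isolated singular points, or a comparison of the two Gersten complexes which differ only in sheaves supported in codimension $< d$, shows this map is an isomorphism. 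Thus it suffices to prove $\lambda_X$ is an isomorphism.

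The heart of the argument is then to compare $\lambda_X$ with $\theta_X$ through the reciprocity isomorphism of Kerz--Saito--type class field theory on the étale side: there is an identification $H^d(X,\sK^M_{d,X})$ with a motivic-cohomology / idèle-class-type group whose degree-zero part is dual, by finiteness and Pontryagin duality over a finite field, to $\pi^{\rm ab}_1(X_{\rm reg})^0$, and the cycle class map $\lambda_X$ is compatible with $\theta^0_X$ under these identifications. Concretely, restricting to $X_{\rm reg}$ one has on the smooth open part the Bloch--Quillen isomorphism $\CH_0$ (with suitable support conditions) $\simeq H^d(\sK^M_d)$, and one must show the boundary contributions at the isolated singular points match on both sides; here the vanishing of the relevant local cohomology (the singular points being isolated, so of the expected codimension $d$) is what makes the comparison work. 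Combining: $\theta^0_X$ is an isomorphism of finite groups, hence so is its ``dual'' on $H^d_{\deg 0}$, and feeding in the degree map on each side upgrades this to an isomorphism $\CH_0(X) \xrightarrow{\simeq} H^d(X,\sK^M_{d,X})$. I expect the main obstacle to be precisely this last compatibility: verifying that the cycle class map built from the Levine--Weibel definition is intertwined with the reciprocity map $\theta_X$ of \thmref{thm:CFT-Sing} under the cohomological duality, and controlling the local terms at the singular points so that passing from $X_{\rm reg}$ to $X$ introduces no discrepancy — essentially a careful diagram chase relating the cycle-theoretic, $K$-cohomological, and étale-fundamental-group descriptions, all of which are already reconciled on the smooth locus but must be shown to glue correctly across the (zero-dimensional) singular locus.
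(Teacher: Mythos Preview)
Your overall strategy matches the paper's: factor $\theta_X$ as $\CH_0(X)\xrightarrow{\lambda_X} H^d(X,\sK^M_{d,X})\to \pi^{\rm ab}_1(U)$, use \thmref{thm:CFT-Sing} to see the composite is an isomorphism on degree-zero parts, use an independently known isomorphism for the second arrow, and conclude that $\lambda_X$ is an isomorphism; the Milnor--Quillen comparison is exactly \propref{prop:Milnor-res-*}.

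There is, however, one genuine misstep. The identification of $H^d(X,\sK^M_{d,X})^0$ with $\pi^{\rm ab}_1(U)^0$ is \emph{not} via Pontryagin duality. It is the direct Kato--Saito reciprocity map of \cite{KS-2} (expounded in \cite[Theorem~6.2]{Raskind}), which sends the id\`ele class group to the abelianized fundamental group and is shown to be an isomorphism on degree-zero parts. If you really used duality you would obtain only a non-canonical isomorphism of finite abelian groups, and the commutativity $\theta^0_X = (\text{second arrow})\circ\lambda_X$ --- which is exactly what your diagram chase needs --- would be lost. With the correct input the argument goes through verbatim. One further technical point the paper records (an easy higher-dimensional analogue of \corref{cor:Surface-0-*1}): since Kato--Saito's theorem is phrased as an inverse limit over closed subschemes $S$ supported on $X_{\rm sing}$, one must check that $H^d(X,\sK^M_{d,(X,S)})\to H^d(X,\sK^M_{d,X})$ is an isomorphism for all such $S$, which is immediate from the fact that $S$ is zero-dimensional.
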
 

We remark here that the Bloch-Quillen type formula for the 
Chow group of 0-cycles in known to be false if we allow non-isolated
singularities in dimension three or more (see \cite[\S~3.2]{Srinivas}).

\begin{comment}
Our final result is motivated by the following question of Kerz-Saito.
Given a smooth variety $U$ of dimension $d \ge 2$ over an infinite perfect
field $k$ of positive characteristic, let $C(U)$ be as defined above
and let $C^{KS}(U)$ denote the group
\[
C^{KS}(U) = {\underset{D}\varprojlim}\ H^d_{Nis}(X, K^M_{d,(X, D)}),
\]
where $D$ runs through all effective Cartier divisors on a normal
compactification $X$ of $U$ with $|D| \subseteq X \setminus U$.

\begin{ques}$($\cite[Question~V]{KeS}$)$\label{ques:Inf}
Is the natural map $\tau_U: C(U) \to C^{KS}(U)$ an isomorphism?
\end{ques}

We prove the following:
\begin{thm}\label{thm:Final}
Let $X$ be a normal quasi-projective surface over an infinite perfect field $k$
and let $U$ denote the regular locus of $X$. Then 
$\tau_U$ an isomorphism.
\end{thm} 
\end{comment}
\vskip .2cm

\subsection{Outline}
Our strategy for proving \thmref{thm:CFT-Sing} is to first prove it for  
surfaces. This case requires us to use a result of Kato and Saito
\cite{KS-2} which describes the class field theory of regular open
subsets of projective schemes in terms of a generalized id{\`e}le class group.
The general case is deduced from surfaces using induction
on dimension with the aid of Lefschetz type theorems for fundamental groups
and Bertini type theorems over finite fields. 
Theorem~\ref{thm:KES-surf} is proven by combining \thmref{thm:CFT-Sing}
with some results of \cite{KSri} and cycle class maps for Chow
groups with modulus. We prove Theorems~\ref{thm:B-formula} 
using a combination of \thmref{thm:CFT-Sing} and main results of \cite{KS-2}.

\section{Chow group of 0-cycles on singular schemes}
\label{sec:CFT*-0}
In this section, we recall the definition of the Chow group of 0-cycles
for singular schemes from \cite{LW}. Specializing to the schemes with
isolated singularities, we show that there exist canonical
maps from this Chow group to the top Zariski cohomologies of the Milnor
and the Quillen $K$-sheaves.

\subsection{Chow group of 0-cycles}\label{section:LW}
We first define the Chow group of 0-cycles for curves. 
A {\sl curve} will mean an equi-dimensional
quasi-projective scheme over $k$ of dimension one.

Let $C$ be a reduced curve and let $D \subsetneq C$ be a closed subscheme 
such that $C_{\rm sing} \subseteq D$ and $D$ contains no irreducible component 
of $C$. Let $\sZ_0(C,D)$ denote the free abelian group on closed points in 
$C \setminus D$. Let $\{C_1, \cdots , C_r\}$ denote the set of irreducible
components of $C$ with generic points $\{\eta_1, \cdots , \eta_r\}$,
respectively. Let $k(C)$ denote the ring of total quotients on $C$.
Since $C$ is a reduced curve, it is Cohen-Macaulay. This implies in particular
that the map $k(C) \to \stackrel{r}{\underset{i=1}\prod} \sO_{C, \eta_i}$ is
an isomorphism and hence the map $\iota_{(C,D)}: \sO^{\times}_{C,D} \to 
\stackrel{r}{\underset{i=1}\prod} \sO_{C, \eta_i}$ is injective.
 
Given $f \in \sO^{\times}_{C,D}$, let $(f)_C$ denote the divisor of zeros and 
poles of $f$ on $C$ in the sense of \cite{Fulton}. 
Since $C_{\rm sing} \subseteq D$, it is clear from the above definitions that
$(f)_C = \stackrel{r}{\underset{i=1}\sum} (f_i)_{C_i}$. Let $\sR_0(C,D)$
denote the subgroup of $\sZ_0(C,D)$ generated by the set 
$\{(f)_C| f \in \sO^{\times}_{C,D}\}$. The group $\sR_0(C,D)$ is called the
group of rational equivalences on $C$ relative to $D$.
The {\sl Chow group of 0-cycles} on $(C,D)$ is defined
\nolinebreak
by 
\[
\CH_0(C,D):= \frac{\sZ_0(C,D)}{\sR_0(C,D)}.
\]
 
\begin{prop}$($\cite[Proposition~1.4]{LW}$)$\label{prop:0-cyc-curve}
Let $(C,D)$ be as above. Then there is a cycle class map $cyc_C: 
\CH_0(C,D) \to K_0(C)$ which induces an isomorphism $cyc_C: \CH_0(C,D) 
\xrightarrow{\simeq} \Pic(C)$. In particular, $\CH_0(C,D)$ is independent of 
$D$.
\end{prop}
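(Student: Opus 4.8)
The plan is to construct the cycle class map $cyc_C$ directly on the level of cycles, check it kills rational equivalences, and then identify its image with $\Pic(C)$. For the construction, a closed point $x \in C \setminus D$ has $\sO_{C,x}$ regular of dimension one, so the skyscraper sheaf $k(x)$ has a Koszul-type finite free resolution locally; globally, sending $[x] \mapsto [\sO_x] \in K_0(C)$ (the class of the structure sheaf of the closed subscheme $x$, which is a perfect complex since $C$ is Cohen--Macaulay and $x$ avoids the singular locus) defines a homomorphism $cyc_C: \sZ_0(C,D) \to K_0(C)$. The first thing to verify is that this lands in the subgroup generated by classes of line bundles, equivalently that one can detect it via $\det$. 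Since each point $x$ is a Cartier divisor on $C$ near $x$ (regularity of $\sO_{C,x}$), one has $[\sO_x] = [\sO_C] - [\sI_x]$ with $\sI_x$ an invertible ideal sheaf, so in fact $cyc_C([x]) = [\sO_C(x)] - [\sO_C]$ and the map factors through $\Pic(C) \subseteq K_0(C)$ via $x \mapsto \sO_C(x)$. This realizes $cyc_C$ as the classical map $\sZ_0(C,D) \to \Pic(C)$ sending a 0-cycle to its associated invertible sheaf.

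Next I would show $cyc_C$ kills $\sR_0(C,D)$. Given $f \in \sO^\times_{C,D}$, one has $(f)_C = \sum_i (f_i)_{C_i}$ by the discussion preceding the proposition, and $\sO_C((f)_C)$ is the invertible sheaf generated by $f^{-1}$ — that is, $f$ gives a trivialization of $\sO_C((f)_C)$ on $C \setminus |(f)_C|$ and, because $f$ is a unit in $\sO_{C,D}$, also near $D$; patching these shows $\sO_C((f)_C) \cong \sO_C$. Hence $cyc_C$ descends to $\overline{cyc}_C: \CH_0(C,D) \to \Pic(C)$. Surjectivity is the statement that every line bundle on a reduced quasi-projective curve has a rational section whose divisor of zeros and poles avoids any prescribed finite closed subset containing $C_{\rm sing}$; this is a standard moving/prime-avoidance argument using that $C$ is quasi-projective, so an ample twist has a section avoiding $D$, and the difference of two such sections has divisor supported away from $D$.

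The main obstacle is injectivity: if a 0-cycle $\alpha \in \sZ_0(C,D)$ has $\sO_C(\alpha) \cong \sO_C$, I must produce $f \in \sO^\times_{C,D}$ — a function that is a genuine unit along all of $D$, not merely regular — with $(f)_C = \alpha$. A global trivialization of $\sO_C(\alpha)$ gives a rational function $f$ on $C$ with $(f)_C = \alpha$; since $\alpha$ is supported in $C \setminus D$ and $C_{\rm sing} \subseteq D$, the function $f$ has neither zero nor pole at any point of $D$, but at a singular point $f$ could still fail to be a unit in $\sO_{C,x}$ (it could be a non-unit, non-zerodivisor). The resolution is to work with the semilocal ring $\sO_{C,D}$, which is a one-dimensional Cohen--Macaulay ring whose non-maximal primes are the generic points $\eta_i$; an element of $\sO_{C,D}$ that is a unit at each $\eta_i$ (i.e. a non-zerodivisor, since $C$ is reduced and $k(C) = \prod_i \sO_{C,\eta_i}$) and has no zeros or poles at the closed points of $D$ is automatically a unit in $\sO_{C,D}$ because its divisor is empty there. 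So one replaces $f$ by a suitable unit multiple using that $\Pic(\Spec \sO_{C,D})$ is trivial (semilocal ring), arranging $f \in \sO^\times_{C,D}$ with $(f)_C = \alpha$, which gives $\alpha \in \sR_0(C,D)$. The last sentence of the proposition, independence of $D$, is then immediate: $\Pic(C)$ does not involve $D$, so $\CH_0(C,D) \cong \Pic(C)$ for every admissible $D$; alternatively one checks directly that enlarging $D$ induces an isomorphism on Chow groups by the same semilocal-unit argument.
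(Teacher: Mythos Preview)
The paper does not prove this proposition; it is quoted from \cite[Proposition~1.4]{LW} and stated without argument, so there is no proof in the paper to compare against.

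Your argument is correct and follows the natural line. One remark on the injectivity step, where you make the argument harder than necessary: when $\sO_C(\alpha)\cong\sO_C$, the trivialization hands you $f$ not merely as an element of $k(C)^\times$ with Weil divisor $\alpha$, but as a local generator of the \emph{Cartier} divisor $\alpha$ at every point of $C$. At each $x\in D$ the local equation of $\alpha$ is $1$, so $f\in\sO_{C,x}^\times$ directly; hence $f\in\sO_{C,D}^\times$ with $(f)_C=\alpha$, and the detour through ``replace $f$ by a unit multiple using $\Pic(\Spec\sO_{C,D})=0$'' is not needed. Your residual worry --- that at a singular $x\in D$ an element of $\sO_{C,x}$ could be a non-unit non-zerodivisor with empty Weil divisor at $x$ --- is in any case unfounded: if $f\in\sO_{C,x}$ and each restriction $f_i\in\sO_{C_i,x}$ has $\mathrm{ord}_x(f_i)=0$ in Fulton's sense, then each $f_i$ is a unit in $\sO_{C_i,x}$, so $f(x)=f_i(x)\neq 0$ and $f$ is a unit.
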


Let $X$ now be a reduced and connected equi-dimensional quasi-projective 
scheme over $k$ of dimension $d \ge 2$. A reduced curve $C \inj X$ is called 
{\sl Cartier} if: 
%the following hold.
\begin{enumerate}
\item
Each component of $C$ intersects $X_{\rm sing}$ properly.
\item
For every closed point $x \in X_{\rm sing} \cap C$, the inclusion
$C \inj X$ is a local complete intersection at $x$. 
\end{enumerate}

%\begin{remk}
%If $k$ is infinite, then (1) can be replaced by the weaker condition that no component of $C$ is contained in $X_{\rm sing}$.
%\end{remk}
\enlargethispage{25pt}

Let $\sZ_0(X)$ denote the free abelian group on the set of closed points
in $X_{\rm reg}$. Given a Cartier curve $\iota: C \inj X$, let $D = C \cap
X_{\rm sing}$ and let $\sO^{\times}_{C,D} \inj k(C) \simeq 
\stackrel{r}{\underset{i=1}\prod} \sO_{C, \eta_i}$ be the inclusion as 
before. Here, $\{C_1, \cdots , C_r\}$ denotes the set of irreducible
components of $C$ with generic points $\{\eta_1, \cdots , \eta_r\}$.
 
Given $f \in \sO^{\times}_{C,D}$ and $1 \le i \le r$, 
let $f_i$ denote the $i$th component
of $f$ in $k(C)$ and let $(f_i)_{C_i}$ denote the divisor of zeros and 
poles of $f_i$ on $C_i$ in the sense of \cite{Fulton}. 
We set $(f)_C := \stackrel{r}{\underset{i=1}\sum} \iota_*((f_i)_{C_i})$. 
Let $\sR_0(X)$ denote the subgroup of $\sZ_0(X)$ generated by the set 
$\{(f)_C| f \in \sO^{\times}_{C,D}, \ C \ \ \mbox{Cartier \ on} \ X\}$. 
The group $\sR_0(X)$ will be called the
group of rational equivalences.
The {\sl Chow group of 0-cycles} on $X$ is defined
by 
\begin{equation}\label{eqn:0-cycl}
\CH_0(X):= \frac{\sZ_0(X)}{\sR_0(X)}.
\end{equation}

The following result simplifies the definition of rational equivalence in
special cases. 
%This is a consequence of Bertini theorems.

\begin{lem}$($\cite[\S~2]{BS-1}$)$\label{lem:RE}
In the definition of the group of rational equivalences of 0-cycles
above, we can assume that a Cartier curve $C$ is irreducible 
if $X$ is so.
\end{lem}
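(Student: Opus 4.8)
The goal is to show that in the definition of $\sR_0(X)$ for an integral scheme $X$ of dimension $d\ge 2$, it suffices to use only \emph{irreducible} Cartier curves. One inclusion is trivial: irreducible Cartier curves are among all Cartier curves, so the subgroup they generate is contained in $\sR_0(X)$. The content is the reverse inclusion: given an arbitrary (reducible, reduced) Cartier curve $C \inj X$ with components $C_1,\dots,C_r$, local parameter data $f \in \sO^\times_{C,D}$ with $D = C\cap X_\sing$, and resulting cycle $(f)_C = \sum_i \iota_*((f_i)_{C_i})$, I must exhibit the $0$-cycle $(f)_C$ as a $\Z$-linear combination of cycles $(g_j)_{C'_j}$ coming from \emph{irreducible} Cartier curves $C'_j \inj X$.

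**Key steps.** First I would reduce to handling one component at a time: it is enough to show that for each $i$, the $0$-cycle $\iota_*((f_i)_{C_i})$ is a rational equivalence supported on the irreducible curve $C_i$ (viewed as a closed subscheme of $X$), since then $(f)_C$ is a sum of such. So fix an irreducible component $C_i$, with generic point $\eta_i$ and $f_i \in \sO_{C,\eta_i}^\times = k(C_i)^\times$. The subtlety is that $C_i$, equipped with the scheme structure it inherits as a reduced irreducible component of $C$, need not itself satisfy the local complete intersection condition (2) at the points of $C_i \cap X_\sing$, even though $C$ does. I would argue that, since $C\inj X$ is a local complete intersection at each $x\in C\cap X_\sing$ and $x$ is an isolated point of $C\cap X_\sing$ lying on $C_i$ (as each component meets $X_\sing$ properly), after possibly shrinking one can find an irreducible Cartier curve $C'_i\inj X$ that agrees with $C_i$ away from a finite set and such that $f_i$ extends to an element $g_i \in \sO^\times_{C'_i, C'_i\cap X_\sing}$ with $\iota_*((f_i)_{C_i}) = (g_i)_{C'_i}$ in $\sZ_0(X)$; the divisors of zeros and poles live on $X_\reg$, where $C_i$ and $C'_i$ coincide, so the cycles match. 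Concretely, the l.c.i.\ property of $C$ at $x$ lets one choose a regular sequence cutting out $C$ near $x$; discarding the equations that vanish on the other branches through $x$ produces an l.c.i.\ curve through $x$ whose local branch at $x$ is exactly that of $C_i$, and one glues these local modifications to the rest of $C_i$. This is the standard device, and the reference to \cite[\S 2]{BS-1} (Biswas--Srinivas) is presumably exactly this argument.

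**The main obstacle.** The crux is the gluing/extension step: one must produce a \emph{global} irreducible Cartier curve $C'_i$ in $X$ that realizes the prescribed behavior of $f_i$ at the finitely many points of $C_i\cap X_\sing$ while remaining equal to $C_i$ on the regular locus, and one must check that the function $f_i$, a priori only in $k(C_i)^\times$ with controlled zeros/poles on $X_\reg$, actually lifts to a unit in $\sO^\times_{C'_i, C'_i\cap X_\sing}$ so that it is a legitimate datum for a rational equivalence on $C'_i$. Away from $X_\sing$ there is nothing to prove, since $\sO^\times_{C,D}$ is designed to make $f_i$ a unit near $D$; the work is entirely local at the singular points of $X$ lying on $C$, and the l.c.i.\ hypothesis (2) is precisely what makes it go through. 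Once this local picture is set up, assembling it into the statement is bookkeeping, so I would not belabor it here.
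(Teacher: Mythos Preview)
Your reduction to one component at a time is fine, and you have correctly located the difficulty: an irreducible component $C_i$ of a Cartier curve $C$ need not itself be l.c.i.\ in $X$ at points of $X_{\rm sing}$. But the proposed repair does not work, for two reasons.

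First, the phrase ``discarding the equations that vanish on the other branches through $x$ produces an l.c.i.\ curve through $x$ whose local branch at $x$ is exactly that of $C_i$'' does not make sense. If $C$ is cut out near $x$ by a regular sequence $g_1,\dots,g_{d-1}$ in $\sO_{X,x}$, then dropping any $g_j$ yields a subscheme of dimension two, not a curve. The branches of $C$ at $x$ are the minimal primes of $\sO_{X,x}/(g_1,\dots,g_{d-1})$, and there is no mechanism that produces, from the regular sequence for $C$, a regular sequence of the same length cutting out just one branch; indeed that branch need not be a local complete intersection at all.

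Second, and more fundamentally, an integral closed subscheme of $X$ is determined by its generic point. If $C'_i$ is an integral curve that ``agrees with $C_i$ away from a finite set'', then $C'_i$ and $C_i$ have the same generic point, hence $C'_i = C_i$. There is no room for the local surgery you describe: you cannot modify an integral curve at finitely many closed points while keeping the rest fixed and obtain a genuinely different closed subscheme of $X$.

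The argument in \cite[\S~2]{BS-1} is global rather than local. One embeds the given curve in a reduced complete intersection $V=H_1\cap\cdots\cap H_{d-1}$ of sufficiently ample hypersurface sections of $X$; because $C$ is l.c.i.\ at the relevant points, the residual curve is again Cartier, and extending $f$ by $1$ on the residual rewrites $(f)_C$ as a relation supported on $V$. The irreducibility of $X$ enters through Bertini-type statements for the linear systems $|H_j|$, which let one arrange that the curves arising in this linkage process are irreducible. The point is that one replaces $C$ by genuinely different curves via linkage in a global complete intersection, not by attempting to alter $C_i$ near $X_{\rm sing}$.
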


\begin{exm}\label{exm:failure}
Let $C$ be the projective plane curve over $k$ which has a simple cusp along
the origin and is regular elsewhere. Its local ring at the singular point
is analytically isomorphic to $k[[t^2, t^3]]$ which is canonically a 
subring of its normalization $k[[t]]$.
Let $\pi: \P^1 \to C$ denote the normalization map. Let $S \simeq
\Spec({k[t]}/{(t^2,t^3)})$ denote the reduced conductor and let
$\wt{S} \simeq \Spec({k[t]}/{(t^2)})$ denote its scheme-theoretic inverse image
in $\P^1$. We then have a commutative diagram with exact rows:
\begin{equation}\label{eqn:failure-0}
\xymatrix@C1pc{
0  \ar[r] & {\underset{m}\varprojlim} {\sO^{\times}(mS)}/{k^{\times}} 
\ar[r] \ar[d] & {\underset{m}\varprojlim} K_0(C, mS) \ar[r] \ar[d]_{\simeq} &
\Pic(C) \ar[r] \ar[d]^{\pi^*} & 0 \\
0  \ar[r] & {\underset{m}\varprojlim} {\sO^{\times}(m\wt{S})}/{k^{\times}} 
\ar[r] & {\underset{m}\varprojlim} K_0(\P^1, m\wt{S}) \ar[r]  &
\Pic(\P^1) \ar[r] & 0.}
\end{equation}
The isomorphism of the middle vertical map follows from the known result
that the double relative $K$-groups $K_0(C, \P^1, mS)$ and $K_{-1}(C, \P^1, mS)$
vanish. 

It is easy to check from the $K$-theory localization sequence
that $\Pic(\P^1, m\wt{S}) \xrightarrow{\simeq} K_0(\P^1, m\wt{S})$.
On the other hand, the known class field theory for curves (with modulus)
tells us that there is a canonical isomorphism
${\underset{m}\varprojlim} \Pic^0(\P^1, m\wt{S}) \xrightarrow{\simeq} 
\pi^{\rm ab}(C_{\rm reg})^0$.
It follows that there is an isomorphism
%\begin{equation}\label{eqn:failure-1}
$(1 + tk[[t]])^{\times} \xrightarrow{\simeq} \pi^{\rm ab}(C_{\rm reg})^0$.
%\end{equation}
On the other hand, $A_0(C) \simeq \Pic^0(C) \simeq k$ is finite which shows 
that there is no reciprocity map $A_0(C) \to \pi^{\rm ab}(C_{\rm reg})^0$ 
and the two can not be isomorphic. 
\end{exm}

\subsection{0-cycles and cohomology of $K$-theory sheaves}
\label{sec:Coh-K}
In this note, all cohomology groups will be with respect to the Zariski
topology unless we explicitly mention otherwise.
Given a $k$-scheme $X$, let $\sK^M_{m,X}$ denote the Zariski sheaf on $X$
whose stalk at a point $P \in X$ is the Milnor $K$-group $K^M_m(\sO_{X,P})$
of the local ring $\sO_{X,P}$ (see \cite[\S~1.3]{KS}).
The corresponding sheaf of Quillen $K$-theory will be denoted by $\sK_{m,X}$.
Let us now assume that $X$ is a quasi-projective scheme of
dimension $n \ge 1$ over $k$ with only singularities and let $S$ denote the 
singular locus of $X$. For $j \ge 0$, let $X^j$ denote the set of 
codimension $j$ points on $X$.
Consider the maps of Zariski sheaves:
\begin{equation}\label{eqn:Milnor-K}
\sK^M_{m, X} \xrightarrow{\epsilon}
\left(\begin{array}{c}{\underset{x \in X^0}\coprod}\ (i_x)_* K^M_{m}(k(x)) 
\\ \oplus \\
{\underset{P \in S}\coprod}\ (i_P)_* K^M_{m}(\sO_{X,P}) 
\end{array}\right) \xrightarrow{d_0}
\left(\begin{array}{c}
{\underset{x \in X^1}\coprod} \ (i_x)_* K^M_{m-1}(k(x)) 
\\ \oplus \\
{\underset{P \in S}\coprod}
{\underset{P \in \ov{\{x\}}}\coprod}\ (i_x)_* K^M_{m}(k(x)) 
\end{array}\right)
\xrightarrow{d_1} \cdots
\end{equation}
\[
\cdots \xrightarrow{d_{n-1}}
\left(\begin{array}{c}
{\underset{x \in X^n}\coprod} \ (i_x)_* K^M_{m-n}(k(x)) 
\\ \oplus \\
{\underset{P \in S}\coprod}
{\underset{P \in \ov{\{x\}}}\coprod}\ (i_x)_* K^M_{m-n+1}(k(x)) 
\end{array}\right)
\xrightarrow{d_n} 
\left(\begin{array}{c}
0 \\ \oplus \\
{\underset{P \in S}\coprod} \ (i_P)_* K^M_{m-n}(k(P)) 
\end{array}\right)
\to 0.
\]
Here, the map $\epsilon$ is induced by the inclusion into both terms and
the other maps are given by the matrices
\[
d_0 = \left(\begin{array}{cc}
\partial_1 & 0 \\
-\Delta & \epsilon
\end{array}\right), 
d_1 = \left(\begin{array}{cc}
\partial_1 & 0 \\
\Delta & \partial_2
\end{array}\right),
\cdots ,
d_n = \left(\begin{array}{cc}
0 & 0 \\
\Delta & \partial_2
\end{array}\right)
\]
with $\partial_1$ and $\partial_2$ being the differentials of
the Gersten-Quillen complex for Milnor $K$-theory sheaves as
described in \cite{Kato} (see also \cite{EM}) and $\Delta$'s being the 
diagonal maps.

\begin{lem}\label{lem:Milnor-res}
The above sequence of maps forms a complex which gives a flasque resolution
of the sheaf $\epsilon(\sK^M_{m,X})$.
\end{lem}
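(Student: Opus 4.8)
The plan is to verify three things in turn: (i) the displayed sequence is a complex; (ii) each term is a flasque sheaf; (iii) the complex is exact in positive degrees, with $H^0$ equal to $\epsilon(\sK^M_{m,X})$. The overall organizing principle is to separate the ``generic'' summands (those indexed by points $x \in X^j$) from the ``singular'' summands (those indexed by pairs $(P,x)$ with $P \in S$, $P \in \ov{\{x\}}$), and to recognize that, up to a mapping-cone construction, our complex is built from two copies of the Gersten--Quillen complex for Milnor $K$-theory glued along the diagonal maps $\Delta$.

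First I would treat (i) and (ii). For flasqueness: each summand is a skyscraper-type sheaf of the form $(i_x)_* M$ for a point $x$ and an abelian group $M$ (the points $P \in S$ are closed, so $(i_P)_* K^M_\bullet(\sO_{X,P})$ is a genuine skyscraper; the summands $(i_x)_* K^M_\bullet(k(x))$ for $x \in X^j$ are flasque because a constant sheaf on the irreducible set $\ov{\{x\}}$ pushed forward is flasque), and a direct sum of flasque sheaves is flasque. For the complex property, writing the differentials in the block form given, one computes $d_{j+1} \circ d_j$ block by block: the $(1,1)$ entry is $\partial_1 \circ \partial_1 = 0$ by the Gersten--Quillen complex being a complex (cited to \cite{Kato}); the $(2,2)$ entry is $\partial_2 \circ \partial_2 = 0$ for the same reason; and the $(2,1)$ entry is $\pm\Delta \circ \partial_1 \mp \partial_2 \circ \Delta$, which vanishes because $\Delta$ is a morphism of complexes (the diagonal commutes with the Gersten differential). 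The compatibility at the two ends — the $d_0$ involving $\epsilon$ and the final $d_n$ — is checked the same way, using that $\epsilon$ is a map of complexes and that at a closed point $P \in S$ the residue field appears as the terminal term of the local Gersten complex of $\sO_{X,P}$.

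Next, exactness. The key input is the Gersten conjecture / Gersten resolution for Milnor $K$-theory, known for regular local rings containing a field (this is what makes \eqref{eqn:Milnor-K} ``almost'' a resolution of $\sK^M_{m,X}$ away from $S$): the complex $\sK^M_{m,X} \to \coprod_{x\in X^0}(i_x)_*K^M_m(k(x)) \to \coprod_{x\in X^1}(i_x)_*K^M_{m-1}(k(x)) \to \cdots$ is exact at every point of $X_{\rm reg}$. Checking exactness of our total complex is a stalkwise computation. At a point $y \notin S$, the singular summands all vanish in a neighborhood of $y$, so the stalk of our complex is just the Gersten complex, which is a resolution of $(\sK^M_{m,X})_y = \epsilon(\sK^M_{m,X})_y$. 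At a point $y \in S$ (necessarily closed, since $S$ is finite), the stalk is the total complex of a double complex: the first row is the stalk of the global Gersten complex for $X$ at $y$, the second row is the local Gersten complex of the regular (by resolution of singularities being irrelevant here — rather, one uses that $\sO_{X,P}$ appears, and one needs the Gersten resolution for $K^M_\bullet(\sO_{X,P})$, which one gets from the normalization or directly) — more precisely, the second row computes $K^M_{m}(\sO_{X,P})$ via its own Gersten complex, shifted, and $\epsilon$ together with $\Delta$ assembles these into a cone. So exactness at $y \in S$ reduces, via the long exact sequence of the cone, to the statement that $\epsilon$ induces an isomorphism on the relevant cohomology of the two Gersten complexes in the appropriate degree, with $H^0$ of the total complex being exactly the image $\epsilon(\sK^M_{m,X})_y = K^M_m(\sO_{X,P})$ sitting inside $K^M_m(k(P))$'s column — i.e.\ the subsheaf $\epsilon(\sK^M_{m,X})$, not $\sK^M_{m,X}$ itself, is what is resolved, which is precisely why the lemma is stated with $\epsilon(\sK^M_{m,X})$.

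The main obstacle I anticipate is the bookkeeping at the singular points: correctly identifying the stalk at $P \in S$ as a cone (or iterated cone) of known Gersten-type resolutions, getting all the signs in the block matrices to cooperate so that the cone differential is exactly what the $d_j$'s restrict to, and — most delicately — pinning down $H^0$. One must show the cokernel of $\epsilon$ into the degree-zero term has the property that the induced differential $d_0$ has kernel exactly $\epsilon(\sK^M_{m,X})$; this uses that the map $K^M_m(\sO_{X,P}) \to \prod_i K^M_m(\sO_{X,\eta_i}) = \prod_i K^M_m(k(\eta_i))$ (coming from $C$ being reduced Cohen--Macaulay, as exploited earlier in the paper for curves) is injective, so that an element killed by $d_0$ which lies in the generic summands and in $K^M_m(\sO_{X,P})$ via the two legs must actually come from $K^M_m(\sO_{X,P})$. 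Once the stalk at each $P$ is resolved and the regular locus is handled by the classical Gersten resolution, flasqueness of the terms finishes the proof that this is a flasque resolution of $\epsilon(\sK^M_{m,X})$.
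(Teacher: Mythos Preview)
Your overall architecture---flasqueness from skyscraper-type summands, the block-matrix check that the sequence is a complex, and stalkwise exactness separating regular points (handled by the Gersten resolution for Milnor $K$-theory, which the paper cites as \cite[Proposition~4.3]{EM}) from the singular points---is exactly the argument the paper has in mind; the paper simply outsources the details to the parallel Quillen $K$-theory computation in \cite[\S~5]{PW-1}.

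There is, however, a real confusion in your treatment of the stalk at $P\in S$. You write that one ``needs the Gersten resolution for $K^M_\bullet(\sO_{X,P})$'' and later that one must use injectivity of $K^M_m(\sO_{X,P})\to\prod_i K^M_m(k(\eta_i))$. Neither is required, and the first would be a serious problem since Gersten exactness is not available for singular local rings. The point of the construction is precisely to \emph{avoid} needing Gersten at $S$. At the stalk at $P$, the first row is the (unaugmented) Gersten complex $G^\bullet$ of $\Spec\sO_{X,P}$, and the second row is the \emph{same} complex $G^\bullet$ shifted by one, augmented by $K^M_m(\sO_{X,P})$ in degree $0$; the vertical map $\Delta$ between them is literally the identity $G^j\to G^j$. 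Thus in degrees $\ge 1$ the total complex is the mapping cone of $\mathrm{id}\colon G^\bullet\to G^\bullet$, which is acyclic irrespective of the cohomology of $G^\bullet$. (Equivalently, in the long exact sequence of the short exact sequence of complexes $0\to\tilde C^\bullet\to T^\bullet\to C^\bullet\to 0$ the connecting maps are $\pm\mathrm{id}$.) For $H^0$, an element $(a,b)\in G^0\oplus K^M_m(\sO_{X,P})$ lies in $\ker d_0$ iff $a=\epsilon(b)$, so $\ker d_0=\{(\epsilon(b),b)\}=\im(\epsilon_P)$ on the nose---no injectivity hypothesis enters. This is what the paper means by ``an immediate consequence of the way the differentials are defined.''
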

\begin{proof}
A similar complex for the Quillen $K$-theory sheaves is constructed in 
\cite[\S~5]{PW-1} and it is shown there that this complex is 
a flasque resolution of $\epsilon(\sK_{m,X})$. 
The same proof works here in verbatim. On all stalks except at $S$, the 
exactness follows from \cite[Proposition~4.3]{EM}. The
exactness at the points of $S$ is an immediate consequence of the way the
differentials are defined in ~\eqref{eqn:Milnor-K} (see
\cite{PW-1} for details). 
\end{proof}

For a scheme $X$, let $\CH^F_0(X)$ denote the Chow group of 0-cycles on $X$
in the sense of \cite{Fulton}.
Our proof of the main results will be based on the following
descriptions of the Chow groups of 0-cycles.

\begin{prop}\label{prop:Milnor-res-*}
Let $X$ be an integral quasi-projective scheme of dimension $d \ge 1$ over $k$ 
with only isolated singularities. Then there are canonical maps
\[
\CH_0(X) \to H^d(X, \sK^M_{d,X}) \to H^d(X, \sK_{d,X}) \to \CH^F_0(X),
\]
where the second map is an isomorphism.
\end{prop}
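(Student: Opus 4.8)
The strategy is to use the flasque resolution of $\epsilon(\sK^M_{d,X})$ from \lemref{lem:Milnor-res} to identify $H^d(X, \sK^M_{d,X})$ with the cokernel of the last differential in \eqref{eqn:Milnor-K}, and then match this explicitly with the Levine--Weibel Chow group presentation in \eqref{eqn:0-cycl}. First I would note that since $\epsilon$ is injective on stalks, $\epsilon(\sK^M_{d,X}) \cong \sK^M_{d,X}$, so that computing cohomology of the image sheaf computes cohomology of $\sK^M_{d,X}$. Taking global sections of the flasque resolution in \lemref{lem:Milnor-res}, the group $H^d(X, \sK^M_{d,X})$ is the cohomology at the $n = d$ spot, i.e. the cokernel of $d_{d-1}$ landing in the degree-$d$ term; because $d \ge 1$ and $X$ has only isolated singularities, the degree-$d$ term is $\coprod_{x \in X^d} K^M_0(k(x)) \oplus \coprod_{P \in S} (\text{contribution from } P)$, which after unwinding is generated by closed points of $X$ together with terms supported at the finitely many singular points.

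The heart of the argument is the identification of this cokernel with $\CH_0(X)$. The free group on closed points of $X_{\rm reg}$ maps into the degree-$d$ term of the resolution; I would show this induces a surjection onto $H^d(X, \sK^M_{d,X})$ by using that every closed point of $X$ specializing from a regular codimension-$(d-1)$ point can be moved off $S$ (here regularity in codimension one, hence $\dim S \le d-2$, is what makes $X_{\rm reg}$ contain enough points), and that the singular-point contributions in the degree-$d$ term are hit by the diagonal maps $\Delta$ from the preceding term. For injectivity modulo rational equivalence, the key point is that the image of $d_{d-1}$ is generated by divisors of functions on curves: a codimension-$(d-1)$ point $x$ with closure a curve $C$ contributes, via $\partial_1$ and $\partial_2$, exactly the divisor $(f)_C$ of a rational function on the normalization of $C$, and the correction terms involving $\Delta$ and the local rings $\sO_{X,P}$ at $P \in S$ are precisely what forces $C$ to be a Cartier curve (the local complete intersection condition at singular points matches the requirement that $f$ lie in $\sO^\times_{C,D}$). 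So the subgroup of relations coming from the resolution coincides with $\sR_0(X)$. This gives the first map $\CH_0(X) \to H^d(X, \sK^M_{d,X})$.

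The map $H^d(X, \sK^M_{d,X}) \to H^d(X, \sK_{d,X})$ is induced by the canonical map of sheaves $\sK^M_{d,X} \to \sK_{d,X}$ (equivalently, the comparison of the Milnor-type flasque resolution with the analogous Quillen one from \cite[\S~5]{PW-1}); to see it is an isomorphism, I would compare the two resolutions spot by spot, using that $K^M_m \to K_m$ is an isomorphism in the relevant low degrees ($K^M_0 = K_0 = \Z$, $K^M_1 = K_1$) that appear in the degree-$d$ and degree-$(d-1)$ terms, together with the fact that only these bottom degrees contribute to the cokernel computing $H^d$. Finally, the map $H^d(X, \sK_{d,X}) \to \CH^F_0(X)$ is the one sending the class of a closed point in $X_{\rm reg}$ to itself, well-defined because the Quillen Gersten complex relations are Fulton's rational equivalences on the (regular) components they live on.

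\textbf{Main obstacle.} The delicate step is the injectivity part of the first map: showing that the relations produced by the differential $d_{d-1}$ of \eqref{eqn:Milnor-K}, which a priori involve auxiliary data at the singular points $P \in S$ through the $\Delta$ and $\sO_{X,P}$ terms, are exactly the Levine--Weibel relations defined via Cartier curves with the local complete intersection condition. One must check that the ad hoc-looking local correction terms in the resolution encode precisely the Cartier condition; I expect to invoke \lemref{lem:RE} to reduce to irreducible curves and the structure of the Gersten complex on the normalization to make the matching transparent, but reconciling the two bookkeeping schemes is where the real work lies.
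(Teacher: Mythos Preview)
Your plan misreads what the proposition claims. It asserts only the \emph{existence} of the chain of maps and that the \emph{second} arrow (Milnor to Quillen) is an isomorphism; it does not claim that the first map $\CH_0(X)\to H^d(X,\sK^M_{d,X})$ is an isomorphism. Your ``heart of the argument'' and ``main obstacle'' both aim at that isomorphism, which is not proven here at all---it is established only at the end of the paper (\thmref{thm:B-formula}), and the argument there passes through \thmref{thm:CFT-Sing} and the Kato--Saito class field theory rather than any direct comparison of relations. A bare-hands matching of the relations in the resolution with the Levine--Weibel relations, as you propose, is neither needed for this proposition nor likely to succeed in dimension $\ge 3$.

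The paper's construction of the first and last maps is much simpler. Since $d\ge 2$ and $X_{\rm sing}=S$ is finite, condition~(1) in the definition of a Cartier curve (proper intersection with the $0$-dimensional $S$) forces $C\cap S=\emptyset$; combined with \lemref{lem:RE}, this means $\sR_0(X)$ is generated by divisors of functions on integral curves avoiding $S$. Hence $\CH_0(X)=H_1(\sC^0_X)$, where $\sC^0_X$ is the two-term subcomplex $\coprod_{x\in X^{d-1}_S}K_1(k(x))\xrightarrow{{\rm div}}\coprod_{x\in X^d_S}K_0(k(x))\to 0$ built from points whose closure avoids $S$. The inclusion $\sC^0_X\hookrightarrow\sC_X$ gives the first map and the projection $\sC_X\twoheadrightarrow\sC^{F,0}_X$ (drop the $S$-summands) gives the last; the middle isomorphism holds because only $K_0,K_1,K_2$ of fields appear in the relevant three terms of both resolutions, and $K^M_i=K_i$ for $i\le 2$. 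Thus your ``main obstacle''---matching the $\Delta$ and $\sO_{X,P}$ terms with the l.c.i.\ condition on Cartier curves through $S$---rests on a misconception: no Cartier curve passes through $S$ in this setting, so there is nothing to reconcile. Note also that $H^d$ is not the cokernel of $d_{d-1}$: the resolution has one further term $\coprod_{P\in S}K_0(k(P))$, and $H^d$ is the \emph{middle} homology of the resulting three-term complex $\sC_X$.
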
 
\begin{proof}
The $d = 1$ case is already dealt with in \propref{prop:0-cyc-curve}.
We can thus assume that $d \ge 2$. Let $S$ denote the singular locus of $X$ and
let $X^{j}_S$ denote the set of points on $X$ of codimension $j$ such that
$S \cap \ov{\{x\}} = \emptyset$. 
We first observe that the map of sheaves $\sK^M_{d, X} \surj 
\epsilon(\sK^M_{d, X})$ is generically an isomorphism
and the same holds for the Quillen $K$-theory sheaves.
It follows that (see \cite[Exer.~II.1.19, Lemma~III.2.10]{Hart})
that the map $H^d(X, \sK^M_{d,X}) \to H^d(X, \epsilon(\sK^M_{d,X}))$
is an isomorphism and ditto for the Quillen $K$-theory sheaves.
It follows from \lemref{lem:Milnor-res} that both $H^d(X, \sK^M_{d,X})$ 
and $H^d(X, \sK_{d,X})$ are given by the middle homology of the complex
$\sC_X$: 
\[
%\sC_X:
\left(\begin{array}{c}
{\underset{x \in X^{d-1}}\coprod} \ K_{1}(k(x)) 
\\ \oplus \\
{\underset{P \in S}\coprod}
{\underset{P \in \ov{\{x\}}}\coprod}\ K_{2}(k(x)) 
\end{array}\right) 
%\hspace*{8cm}
%\]
%\[
%\hspace*{2cm}
\xrightarrow{d_{1}}
\left(\begin{array}{c}
{\underset{x \in X^d}\coprod} \ K_{0}(k(x)) 
\\ \oplus \\
{\underset{P \in S}\coprod}
{\underset{P \in \ov{\{x\}}}\coprod}\ K_{1}(k(x)) 
\end{array}\right)
\xrightarrow{d_2} 
\left(\begin{array}{c}
0 \\ \oplus \\
{\underset{P \in S}\coprod} \ K_{0}(k(P)) 
\end{array}\right).
\]
On the other hand, letting $\sC^0_X$ and $\sC^{F,0}_X$ denote the complexes
\[
{\underset{x \in X^{d-1}_S}\coprod} \ K_{1}(k(x)) 
\xrightarrow{{\rm div}}  {\underset{x \in X^d_S}\coprod} \ 
K_{0}(k(x)) \to 0 \ \ 
{\rm and}  
\]
\[
{\underset{x \in X^{d-1}}\coprod} \ K_{1}(k(x)) 
\xrightarrow{{\rm div}}  {\underset{x \in X^d}\coprod} \ K_{0}(k(x)) \to 0, 
\]
respectively, we see that there are canonical maps of chain complexes
$\sC^0_X \inj \sC_X \surj \sC^{F,0}_X$. This 
yields canonical maps $H_1(\sC^0_X) \to H^d(X, \sK^M_{d,X}) \xrightarrow{\simeq} 
H^d(X, \sK_{d,X}) \to H_1(\sC^{F,0}_X)$. 
It follows however by using the definition of 
$\CH_0(X)$ and \lemref{lem:RE} that $H_1(\sC^0_X) \simeq \CH_0(X)$.
It is also clear that $H_1(\sC^{F,0}_X) \simeq \CH^F_0(X)$.
This finishes the proof.
\end{proof}

\subsection{Cycle class map for singular schemes}\label{sec:CCS}
For a scheme $X$ of dimension $d \ge 2$ and a closed subscheme $D \subseteq X$,
let $K(X,D)$ denote the {\sl relative $K$-theory} spectrum given by
the homotopy fiber of the restriction map of spectra $K(X) \to K(D)$.
Given a closed point $x \in X_{\rm reg} \setminus D$, the map of pairs
$(\Spec(k(x)), \emptyset) \to (X,D)$ is of finite tor-dimension.
Hence, it yields a map
$K_0(k(x)) \to K_0(X,D)$ which allows us to define a cycle class $cyc([x]) \in
K_0(X,D)$ as the image of $1 \in K_0(k(x))$.
Let $F^dK_0(X,D)$ denote the subgroup of $K_0(X,D)$ generated by the
cycle classes of all closed points in $X_{\rm reg} \setminus D$.
Let $F^dK_0(X)$ denote the subgroup of $K_0(X)$ generated by cycle classes
of closed points of $X_{\rm reg}$. 
It follows from \cite[Proposition~2]{LW} that the cycle class map
$cyc_X: \sZ_0(X) \to F^dK_0(X)$ kills the group of rational equivalences to 
yield a surjective map
\begin{equation}\label{eqn:Cyc-class-Sing}
cyc_X: \CH_0(X) \surj F^dK_0(X).
\end{equation}

\begin{lem}\label{lem:CCS-0}
Suppose that $X$ has only isolated singularities and let $S \inj X$ be
a closed subscheme supported on $X_{\rm sing}$. Then the maps
$\sZ_0(X) \to K_0(X, S) \to K_0(X)$ induce surjective maps
\begin{equation}\label{eqn:CCS-0-0}
\CH_0(X) \surj F^dK_0(X,S) \surj F^dK_0(X)
\end{equation}
such that the second map is an isomorphism. 
\end{lem}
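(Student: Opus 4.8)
The plan is to reduce the whole statement to one assertion: that the restriction map $\rho\colon K_0(X,S)\to K_0(X)$ is injective on the subgroup $F^dK_0(X,S)$. First I note that since $|S|\subseteq X_{\rm sing}$, every closed point $x\in X_{\rm reg}$ is disjoint from $S$, so the cycle class $cyc([x])\in K_0(X,S)$ of \S\ref{sec:CCS} is defined, and by functoriality it maps to $cyc([x])\in K_0(X)$ under $\rho$; as these classes generate $F^dK_0(X,S)$ and $F^dK_0(X)$ respectively, $\rho$ restricts to a surjection $F^dK_0(X,S)\surj F^dK_0(X)$, which is the second of the two maps in \eqref{eqn:CCS-0-0}. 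Granting injectivity of $\rho|_{F^dK_0(X,S)}$, this map is an isomorphism. Moreover, the composite $\sZ_0(X)\to K_0(X,S)\xrightarrow{\rho}K_0(X)$ is the cycle class map of \eqref{eqn:Cyc-class-Sing}, hence kills $\sR_0(X)$; so $cyc_X(\sR_0(X))\subseteq F^dK_0(X,S)$ lies in $\ker\rho$ and is therefore $0$, which shows that $\sZ_0(X)\to K_0(X,S)$ descends to a surjection $\CH_0(X)\surj F^dK_0(X,S)$ --- the first map in \eqref{eqn:CCS-0-0}. Thus the existence and surjectivity of both maps, and the factorization through $\CH_0(X)$, are all formal once injectivity is known.

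It should be stressed that $\rho$ itself is \emph{not} injective in general. From the fibre sequence $K(X,S)\to K(X)\to K(S)$ one gets the exact sequence $K_1(X)\to K_1(S)\xrightarrow{\partial}K_0(X,S)\xrightarrow{\rho}K_0(X)$, so $\ker\rho\cong\cok\bigl(K_1(X)\to K_1(S)\bigr)$, and this is typically nonzero when $S$ is nonreduced, because the group $1+\fm_S$ of infinitesimal units in $K_1(S)$ need not lie in the image of $K_1(X)$. The content of the lemma is therefore a \emph{transversality} statement: the subgroup of $K_0(X,S)$ generated by $0$-cycle classes of regular closed points --- all supported away from $S$ --- meets $\ker\rho=\im\partial$ only in $0$. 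Proving this is the crux.

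To prove it I would localize near $S$ by Zariski descent. Choose an affine open $V\subseteq X$ with $|S|\subseteq V$ and set $W:=X\setminus|S|$; then $\{V,W\}$ covers $X$, one has $S\subseteq V$, $S\cap W=S\cap(V\cap W)=\emptyset$, and $X_{\rm reg}\subseteq W$. Since $S$ is entirely contained in the single open $V$, relative $K$-theory satisfies descent for this cover: the square with vertices $K(X,S)$, $K(V,S)$, $K(W)$, $K(V\cap W)$ is homotopy cartesian, yielding a Mayer--Vietoris exact sequence, and comparing it with the analogous square for the absolute theories (the $K(W)$- and $K(V\cap W)$-components of $\rho$ being identities) reduces injectivity of $\rho$ on cycle classes to the corresponding statement for $\rho_V\colon K_0(V,S)\to K_0(V)$ over the affine scheme $V$, where it can be attacked directly on the semilocalization of $V$ at $|S|$. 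A second, perhaps cleaner, route is to compare coniveau filtrations: by \lemref{lem:Milnor-res} and the proof of \propref{prop:Milnor-res-*}, the top piece of the coniveau filtration on $K_0(X,S)$ is computed by the subcomplex $\sC^0_X\inj\sC_X$ supported away from $S$, and is thus unchanged if $S$ is removed, whereas $\im\partial\cong\cok(K_1(X)\to K_1(S))$ is carried by the $S$-columns of the complex \eqref{eqn:Milnor-K}, which contribute nothing to $H_1(\sC^0_X)$; this forces $\im\partial\cap F^dK_0(X,S)=0$.

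The hard part is exactly this last step --- pinning down the transversality. Both routes demand care: the descent route requires checking that relative $K$-theory really does satisfy Zariski descent in the form stated and then running a somewhat delicate diagram chase on the two Mayer--Vietoris sequences, and the coniveau route requires identifying the Levine--Weibel group $F^dK_0$ with the top coniveau graded piece and bookkeeping exactly which columns of \eqref{eqn:Milnor-K} receive $\im\partial$. Everything else --- the construction of $cyc_X$ into $K_0(X,S)$, its compatibility with \eqref{eqn:Cyc-class-Sing}, surjectivity of both maps, and the passage to $\CH_0(X)$ --- is formal manipulation of the relative $K$-theory fibre sequence together with the Levine--Weibel definition of rational equivalence.
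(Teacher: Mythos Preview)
Your reduction is exactly the paper's: once one knows that $\rho\colon F^dK_0(X,S)\to F^dK_0(X)$ is an isomorphism, everything else follows formally from \eqref{eqn:Cyc-class-Sing}. The paper, however, does not prove this isomorphism here at all --- it simply invokes \cite[Lemma~3.1]{AK-1}. So what you call ``the crux'' is handled by citation, and your proposal is really an attempt to supply a self-contained argument for that cited lemma.

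Of your two routes, the coniveau one (b) has a genuine confusion. The complex \eqref{eqn:Milnor-K} and the complex $\sC_X$ of \propref{prop:Milnor-res-*} are flasque resolutions computing $H^d(X,\sK^M_{d,X})$ and $H^d(X,\sK_{d,X})$; they are not presentations of $K_0(X,S)$ or of any coniveau filtration on it. In particular there is no sense in which $\im\partial\subseteq K_0(X,S)$ is ``carried by the $S$-columns of \eqref{eqn:Milnor-K}'': the boundary $\partial\colon K_1(S)\to K_0(X,S)$ lives in the relative $K$-theory long exact sequence, not in that Gersten-type complex. Nor does the paper identify $F^dK_0(X,S)$ with $H_1(\sC^0_X)$ --- the map in \propref{prop:Milnor-res-*} goes from $\CH_0(X)$ to sheaf cohomology, and the comparison with $F^dK_0$ is established only later, for surfaces, in \propref{prop:Surface-0} by an independent argument. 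So route (b) as written does not work.

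Route (a) is more promising but, as you yourself note, not finished. The Mayer--Vietoris square for relative $K$-theory is fine, and one does get $\alpha|_W=0$ and $\rho_V(\alpha|_{V,S})=0$. But passing to the semilocal ring $A=\sO_{X,|S|}$ only tells you that $\alpha$ eventually dies in $K_0(A,S)$, i.e.\ $\alpha|_{V,S}=0$ for \emph{some} small $V$; you then still have to chase through the boundary map $K_1(V\cap W)\to K_0(X,S)$ and compare kernels in the relative and absolute Mayer--Vietoris sequences, which amounts to controlling the discrepancy between $K_1(V)$ and $K_1(V,S)$ --- precisely the cokernel you started with. This can be made to work, but it is not the one-line reduction you suggest, and it is essentially the content of the lemma being cited.
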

\begin{proof}
In any case, we have the maps $\sZ_0(X) \surj F^dK_0(X,S) \surj F^dK_0(X)$
and it follows from ~\eqref{eqn:Cyc-class-Sing} that the composite map kills
rational equivalences. So it suffices to show that the map
$F^dK_0(X,S) \to F^dK_0(X)$ is an isomorphism. 
But this follows from \cite[Lemma~3.1]{AK-1}.
\end{proof}

\section{Class field theory for surfaces}\label{sec:Surf*}
In this section, we prove our main results for surfaces.
\thmref{thm:CFT-Sing} is proven by combining the results of 
\cite{KS-2} and the isomorphism of the canonical map $\CH_0(X) \to
H^2(X, \sK^M_{2,X})$. \thmref{thm:KES-surf} is proven with the aid of
\thmref{thm:CFT-Sing} and an explicit formula for the Chow group
of 0-cycles on normal surfaces.

For a $k$-scheme $X$, let $\wt{K}_0(X) := 
{\rm Ker}({\rm rk}: K_0(X) \to H^0(X, \Z))$ and let
$SK_0(X) = {\rm Ker}({\rm det}:\wt{K}_0(X) \to \Pic(X))$.
Let us now assume that $X$ is a surface with isolated singularities.
If $x \in X_{\rm reg}$, then $\sO_{\{x\}}$ is locally defined by a 
regular sequence of length two. It follows that $[\sO_{\{x\}}] \in K_0(X)$ lies
in $SK_0(X)$. We conclude that the cycle class map
$cyc_X: \CH_0(X) \to K_0(X)$ factors through $\CH_0(X) \surj F^2K_0(X) \inj
SK_0(X)$. 

\begin{prop}\label{prop:Surface-0}
Let $X$ be a quasi-projective surface over $k$ with isolated singularities
and let $S \inj X$ be a closed subscheme supported on $X_{\rm sing}$.
Then the following hold.
\begin{enumerate}
\item
The maps $\CH_0(X) \to F^2K_0(X,S) \to F^2K_0(X) \inj SK_0(X)$
are all isomorphisms.
\item
There are isomorphisms
\[
F^2K_0(X,S) \simeq F^2K_0(X) \simeq H^2_{\sC}(X, \sK_2) \simeq
H^2_{\sC}(X, \sK_{2, (X,S)}) 
\]
for $\sC = Zar$ or $Nis$.
\item
If $\pi: \wt{X} \to X$ is the normalization, then the maps 
$\pi^*: \CH_0(X) \to \CH_0(\wt{X})$ and $\pi^{\rm ab}_1(X_{\rm reg}) \to 
\pi^{\rm ab}_1(\wt{X}_{\rm reg})$ are isomorphisms.
\end{enumerate}
\end{prop}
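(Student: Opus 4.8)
The plan is to deduce all three parts from a combination of the relative $K$-theory comparisons already assembled in Section 2, the theorem of Weibel (via \cite{AK-1}, \cite{PW-1}) identifying $SK_0$ with top $K$-cohomology for surfaces, and the normalization argument for the fundamental group. I will treat the three parts in the order (2), (1), (3) rather than as stated, since (2) provides the cohomological model that makes (1) and (3) transparent.

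For part (2): The resolution of \lemref{lem:Milnor-res}, specialized to $d = 2$ and to the Quillen $K$-theory sheaves, shows that $H^2_{\sC}(X, \sK_{2,X})$ is computed by the middle homology of the complex $\sC_X$ appearing in the proof of \propref{prop:Milnor-res-*}, and the same holds for the relative sheaf $\sK_{2,(X,S)}$ once one checks that the relative Gersten-type complex agrees with $\sC_X$ away from $S$ and records the relative $K$-groups of the Artinian local rings at the points of $S$ in the bottom term; since $S$ is $0$-dimensional and supported on $X_{\rm sing}$, the flasqueness argument of \cite{PW-1} goes through verbatim. The independence of the Zariski versus Nisnevich computation follows because the complex $\sC_X$ is a complex of flasque sheaves in either topology (skyscraper-type sheaves on points of $X$), so both $H^2_{Zar}$ and $H^2_{Nis}$ are the hypercohomology of the same complex. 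Finally, the identification of $H^2_{\sC}(X, \sK_2)$ with $F^2K_0(X)$ is the Bloch–Quillen formula for surfaces with isolated singularities in the form proved by Pedrini–Weibel \cite{PW-1}, and similarly for the relative version; alternatively one invokes the coniveau spectral sequence for $K$-theory with supports on $X \setminus X_{\rm reg}$ and observes that, in the relevant degree, only the top row contributes because the codimension-$\le 1$ points lie in $X_{\rm reg}$.

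For part (1): By \eqref{eqn:Cyc-class-Sing} we already have a surjection $\CH_0(X) \surj F^2K_0(X)$, and by \lemref{lem:CCS-0} the map $F^2K_0(X,S) \surj F^2K_0(X)$ is an isomorphism, factoring the previous surjection through $\CH_0(X) \surj F^2K_0(X,S)$. It remains to show $\CH_0(X) \to F^2K_0(X,S)$ is injective and that $F^2K_0(X) \inj SK_0(X)$ is an isomorphism. For injectivity I will use \propref{prop:Milnor-res-*}: the canonical map $\CH_0(X) \to H^2(X, \sK^M_{2,X}) \xrightarrow{\simeq} H^2(X, \sK_{2,X})$ together with part (2) expresses $\CH_0(X)$ as a quotient of $H_1(\sC^0_X)$ mapping to the middle homology of $\sC_X$, and inspection of the two complexes shows this map is injective precisely because $\sC^0_X \inj \sC_X$ is injective in the relevant degree and the extra generators of rational equivalence (from Cartier curves meeting $S$) are exactly those killed in passing from $\sC^0_X$ to $\sC_X$; this is where \lemref{lem:RE} and the definition of $\CH_0(X)$ via Cartier curves are essential. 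The surjectivity of $F^2K_0(X) \inj SK_0(X)$ for a surface is the statement that $SK_0$ of a surface with isolated singularities is generated by smooth closed points; this is classical for normal surfaces (Levine, Srinivas) and reduces to that case via the conductor square, or one cites it directly.

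For part (3): Since $\CH_0(X)$ depends only on $X_{\rm reg}$ through the complex $\sC^0_X$ and the rational equivalences generated by Cartier curves, and normalization $\pi$ is an isomorphism over a dense open set containing all points of $X$ of codimension $\le 1$ (as $X$ is a surface, so $\pi$ is finite and birational, hence an isomorphism outside the finite set $X_{\rm sing}$), the complexes $\sC^0_X$ and $\sC^0_{\wt X}$ have identical codimension-one and codimension-two terms; one must check that $\pi$ induces a bijection on the generating Cartier curves and their rational equivalences, which holds because a Cartier curve on $X$ avoiding $X_{\rm sing}$ pulls back isomorphically, giving $\pi^*: \CH_0(X) \xrightarrow{\simeq} \CH_0(\wt X)$. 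The statement $\pi^{\rm ab}_1(X_{\rm reg}) \xrightarrow{\simeq} \pi^{\rm ab}_1(\wt X_{\rm reg})$ will follow because $\pi$ restricts to an isomorphism of open subschemes over the complement of the finite singular set, and $X_{\rm reg}$, $\wt X_{\rm reg}$ differ from these open subschemes only by removing, respectively, the preimages of $X_{\rm sing}$ — but $X_{\rm reg} \cap \pi^{-1}(X_{\rm sing})$ is a finite set of smooth points, and removing finitely many closed points of codimension $\ge 2$ from a normal (indeed regular) scheme of dimension $2$ does not change $\pi^{\rm ab}_1$ by Zariski–Nagata purity.

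The main obstacle I anticipate is part (1), specifically the injectivity $\CH_0(X) \hookrightarrow F^2K_0(X,S)$: one must match the generators of $\sR_0(X)$ coming from Cartier curves that pass through the singular points with exactly the image of the differential $d_1$ restricted to the $S$-summand of $\sC_X$, and verify that no extra relations are imposed. This is a careful but essentially bookkeeping comparison of the Levine–Weibel definition with the $K$-cohomological complex; the subtlety lies entirely at the points of $S$, and it is the reason the hypothesis of isolated singularities is used.
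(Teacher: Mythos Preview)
Your overall plan is reasonable, but you are making part (1) much harder than it needs to be, and in doing so you leave a genuine gap. The injectivity of $\CH_0(X) \to SK_0(X)$ does \emph{not} follow from the fact that $\sC^0_X \inj \sC_X$ is termwise injective; an injection of complexes does not in general induce an injection on homology, and the ``bookkeeping'' you anticipate is exactly the content of the result you should be citing. The paper's argument is a single line: by \cite[Theorem~2.2]{PW-1} there is an exact sequence
\[
SK_1(X_{\rm sing}) \to \CH_0(X) \xrightarrow{cyc_X} SK_0(X) \to 0,
\]
and since $X_{\rm sing}$ is a finite set of closed points, $SK_1(X_{\rm sing}) = 0$. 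This immediately gives both injectivity and the equality $F^2K_0(X) = SK_0(X)$, and together with \lemref{lem:CCS-0} finishes (1). Your complex-comparison route would essentially amount to reproving the Pedrini--Weibel theorem.

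For part (2) the paper also takes a different and cleaner path. The identification $SK_0(X) \simeq H^2_{\sC}(X,\sK_2)$ comes from the Thomason--Trobaugh spectral sequence together with the split surjection $K_1(X) \to \sO^\times(X)$, not from the flasque resolution. For the relative isomorphism $H^2_{\sC}(X,\sK_{2,(X,S)}) \simeq H^2_{\sC}(X,\sK_2)$, rather than building a Gersten resolution for the relative sheaf (which is not available in the literature and would need real work), the paper simply uses the four-term exact sequence $0 \to \sK_{3,S}/\sK_{3,X} \to \sK_{2,(X,S)} \to \sK_{2,X} \to \sK_{2,S} \to 0$, splits it into two short exact sequences, and observes that the outer sheaves are supported on the $0$-dimensional $S$ and hence have no higher Zariski or Nisnevich cohomology (for Nisnevich one uses that $i_*$ is exact and that $S$ has Nisnevich cohomological dimension zero). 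Your claim that the terms of $\sC_X$ are flasque for the Nisnevich topology is not obviously correct for the terms indexed by non-closed points and would need justification.

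For part (3), your fundamental-group argument via Zariski--Nagata purity is exactly the paper's. For the Chow groups, however, your direct comparison of Cartier curves only handles one direction cleanly: a Cartier curve on $\wt{X}$ avoiding $\wt{X}_{\rm sing}$ may still meet $\pi^{-1}(X_{\rm sing})$, so it need not come from a Cartier curve on $X$ avoiding $X_{\rm sing}$. The paper avoids this by deducing the Chow-group isomorphism from parts (1), (2) and \cite[Proposition~2.3]{AK-2}, which compares the $K$-cohomology groups directly.
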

\begin{proof}
Since the first two maps in (1) are anyway surjective, it suffices to show that
the composite map $\CH_0(X) \to SK_0(X)$ is an isomorphism.
Since $X$ is a surface with isolated singularities, 
it follows from the above definition of
Cartier curves and \lemref{lem:RE} that we can assume that the group of rational
equivalences in $\sZ_0(X)$ is generated by the divisors of rational functions 
on integral curves on $X$ which do not meet $X_{\rm sing}$. 
It follows now from \cite[Theorem~2.2]{PW-1} that there is an exact sequence
\[
SK_1(X_{\rm sing}) \to \CH_0(X) \xrightarrow{cyc_X} SK_0(X) \to 0.
\]
As $X_{\rm sing}$ is finite, the first term in this sequence is zero
and we get (1).

We now show (2). The isomorphism $SK_0(X) \simeq  H^2_{\sC}(X, \sK_2)$ follows
directly from the Thomason-Trobaugh spectral sequence
$E^{p,q}_2 = H^p_{\sC}(X, \sK_q) \Rightarrow K_{q-p}(X)$ with differential
$d_r: E^{p,q}_r \to E^{p+r,q+r-1}_r$ and the fact that the
map $K_1(X) \to H^0_{\sC}(X, \sK_1) \simeq \sO^{\times}(X)$ is split surjective
(see \cite[pg. 162]{KSri}).

We are left with showing that the map 
$H^2_{\sC}(X, \sK_{2, (X,S)}) \simeq H^2_{\sC}(X, \sK_2)$ is an isomorphism
for $\sC = Zar$ or $Nis$.
We have an exact sequence of sheaves
\[
0 \to {\sK_{3,S}}/{\sK_{3,X}} \to \sK_{2, (X,S)} \to \sK_{2,X} \to
\sK_{2,S} \to 0.
\]
This yields two short exact sequences 
$0 \to {\sK_{3,S}}/{\sK_{3,X}} \to \sK_{2, (X,S)} \to
\sF \to 0$ and $0 \to \sF \to \sK_{2,X} \to
\sK_{2,S} \to 0$. 

Since ${\sK_{3,S}}/{\sK_{3,X}}$ and $\sK_{2,S}$ are supported on $S$ which is
0-dimensional, the isomorphism 
$H^2_{Zar}(X, \sK_{2, (X,S)}) \simeq H^2_{Zar}(X, \sK_2)$ follows at once by 
considering the long cohomology exact sequences.
To prove this for the Nisnevich site, we apply the same argument
using the following inputs: if $\sG$ is a sheaf supported on $i:S \inj X$, then
$\sG \xrightarrow{\simeq} i_*(\sG|_S)$. In particular,
$H^i_{Nis}(X, \sG) = 0$ for $i > 0$ because $i_*$ is exact in Nisnevich
topology and the Nisnevich cohomological dimension of $S$ is zero.
This finishes the proof of (2).

The isomorphism $\CH_0(X) \xrightarrow{\simeq} \CH_0(\wt{X})$ follows
directly from (1), (2) and \cite[Proposition~2.3]{AK-2}.
It should be observed here that the relevant part of the cited result holds
over any field even though the result there is stated over $\C$.
%see also Pedrini-Weibel Asterisque
The isomorphism $\pi^{\rm ab}_1(X_{\rm reg}) \xrightarrow{\simeq} 
\pi^{\rm ab}_1(\wt{X}_{\rm reg})$ follows from the Zariski-Nagata
purity theorem (see \cite[Expos{\'e}~X, Th{\'e}or{\`e}me~3.1]{SGA-1})
because $X_{\rm reg} \inj \wt{X}_{\rm reg}$ is an open immersion of smooth
schemes whose complement has codimension two. This proves (3).
\end{proof}
%To show that the map $\CH_0(X) \to K_0(X)$ factors through $\CH_0(X) \to H^2(K_2)$, we write this cohomology as the homology of the complex $\sC^0_X$ and directly define a map from this group to $K_0(X)$ starting with the case of smooth points of $X$.

\begin{cor}\label{cor:Surface-0-*1}
Let $X$ be as in \propref{prop:Surface-0} and let $\sI_S$ denote
the sheaf of ideals defining a closed subscheme $i: S \inj X$
supported on the singular points.
Let $\sK^M_{2,(X,S)}$ denote the Zariski sheaf defined by the exact sequence
\[
0 \to \sK^M_{2,(X,S)} \to  \sK^M_{2,X} \to i_*(\sK^M_{2,S}) \to 0.
\]
Then the maps $\CH_0(X) \to H^2(X, \sK^M_{2,X})$ and 
$H^2(X, \sK^M_{2,(X,S)}) \to  H^2(X, \sK^M_{2,X})$ are
isomorphisms.
\end{cor}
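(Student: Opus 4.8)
The plan is to reduce the two assertions of \corref{cor:Surface-0-*1} to the Quillen $K$-theory statements already established in \propref{prop:Surface-0}, via the maps constructed in \propref{prop:Milnor-res-*}. First I would invoke \propref{prop:Milnor-res-*} (in the case $d=2$) to get the chain of canonical maps $\CH_0(X) \to H^2(X, \sK^M_{2,X}) \xrightarrow{\simeq} H^2(X, \sK_{2,X}) \to \CH^F_0(X)$, where the middle map is an isomorphism. Combining this with \propref{prop:Surface-0}(1)–(2), which identify $\CH_0(X)$ with $SK_0(X) \simeq H^2_{Zar}(X,\sK_2) = H^2(X,\sK_{2,X})$ via $cyc_X$, I would argue that the composite $\CH_0(X) \to H^2(X,\sK^M_{2,X}) \xrightarrow{\simeq} H^2(X,\sK_{2,X})$ agrees with $cyc_X$ (both are built from the same class of a closed point in $X_{\rm reg}$), hence is an isomorphism; since the second arrow is already known to be an isomorphism, so is the first. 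This gives the first claim.

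For the second claim, about the relative Milnor sheaf, I would mimic the Nisnevich part of the proof of \propref{prop:Surface-0}(2): the defining exact sequence $0 \to \sK^M_{2,(X,S)} \to \sK^M_{2,X} \to i_*(\sK^M_{2,S}) \to 0$ has its third term supported on the $0$-dimensional scheme $S$. I would pass to the long exact cohomology sequence on the Zariski site; since $i_*(\sK^M_{2,S})$ is a skyscraper-type sheaf supported on finitely many points, its higher Zariski cohomology vanishes (a flasque/skyscraper sheaf on a $0$-dimensional scheme pushed forward along a closed immersion has no higher cohomology), which forces $H^2(X,\sK^M_{2,(X,S)}) \xrightarrow{\simeq} H^2(X,\sK^M_{2,X})$ and likewise kills the obstruction coming from $H^1$. (One small point to check: that the quotient $\sK^M_{2,X}/\sK^M_{2,(X,S)}$ really is $i_*(\sK^M_{2,S})$ as a sheaf on $X$, which is exactly the definition imposed in the statement, so there is nothing to prove there.)

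The main obstacle I anticipate is the compatibility in the first part: verifying that the canonical Milnor-to-Quillen comparison map of \propref{prop:Milnor-res-*} is genuinely compatible with the cycle class maps $cyc_X$ into $SK_0(X)$ and with the Thomason–Trobaugh identification $SK_0(X) \simeq H^2_{Zar}(X,\sK_2)$. This is "morally obvious" — everything in sight is the class of a reduced point of $X_{\rm reg}$, represented by a Koszul complex — but pinning it down requires matching the explicit Gersten-type complex $\sC_X^0 \inj \sC_X$ of \propref{prop:Milnor-res-*} against the Brown–Gersten/Thomason–Trobaugh spectral sequence filtration used in \propref{prop:Surface-0}, including the fact (already noted there) that $K_1(X) \to H^0(X,\sK_1)$ is split surjective so that $E_2^{2,2}$ survives to $E_\infty$. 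Once that identification of cycle maps is in hand, the rest is formal, and the Milnor statement follows from the Quillen statement of \propref{prop:Surface-0} together with the isomorphism $H^2(X,\sK^M_{2,X}) \xrightarrow{\simeq} H^2(X,\sK_{2,X})$ of \propref{prop:Milnor-res-*}.
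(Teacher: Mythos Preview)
Your argument for the first isomorphism is essentially the paper's: combine \propref{prop:Milnor-res-*} with \propref{prop:Surface-0}. The compatibility concern you raise (that the Gersten-complex map of \propref{prop:Milnor-res-*} into $H^2(X,\sK_{2,X})$ agrees with the cycle-class/spectral-sequence identification of \propref{prop:Surface-0}) is legitimate and is exactly the content hidden in the paper's one-line ``combining''; it is routine once one notes that both maps send a closed point of $X_{\rm reg}$ to its Koszul class.

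For the second isomorphism your route diverges from the paper's, and yours is the more direct one. You use the defining short exact sequence and the vanishing of $H^i(X,i_*\sK^M_{2,S})$ for $i\ge 1$ (a sheaf supported on a $0$-dimensional closed subscheme has no higher Zariski cohomology), which immediately gives $H^2(X,\sK^M_{2,(X,S)})\xrightarrow{\simeq}H^2(X,\sK^M_{2,X})$. The paper instead passes through Quillen $K$-theory: it uses that $\sK^M_{2,X}\to\sK_{2,X}$ is an isomorphism, so the comparison map $\sK_{2,(X,S)}\to\sK^M_{2,(X,S)}$ is surjective and an isomorphism off $S$, hence induces an isomorphism on $H^2$; one then quotes \propref{prop:Surface-0}(2) for the Quillen relative sheaf. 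Your argument is self-contained and avoids the detour through Quillen relative $K$-theory (and in fact it is the argument the paper itself used inside the proof of \propref{prop:Surface-0}(2) for the Quillen case); the paper's version has the virtue of making the Milnor/Quillen parallel explicit, but as a proof of this corollary your approach is cleaner and generalizes verbatim to the higher-dimensional analogue invoked later in the proof of \thmref{thm:B-formula}.
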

\begin{proof}
The first isomorphism follows by combining 
Propositions~\ref{prop:Milnor-res-*} and ~\ref{prop:Surface-0}.
For the second isomorphism, observe that the map
$\sK^M_{2,X} \to \sK_{2,X}$ is an isomorphism. In particular, the map
$\sK_{2, (X,S)} \to \sK^M_{2,(X,S)}$ is surjective and an isomorphism away
from $S$. It follows that the map $H^2(X, \sK_{2,(X,S)}) \to
H^2(X, \sK^M_{2,(X,S)})$ is an isomorphism. We now apply 
\propref{prop:Surface-0} again. 
\end{proof}

Let $X$ be a connected and projective scheme over $k$ of dimension $d \ge 2$
which is regular in codimension one.
Let ${\rm deg}: \CH_0(X) \to \Z$ denote the degree map. 
Observe that this degree map is surjective.
The reason is that we can find a smooth curve $C \inj X$ which does not
meet $X_{\rm sing}$ and it is a consequence of Lang's density theorem
\cite{Lang-1} that the map ${\rm deg}: \CH_0(C) \to \Z$ is surjective.
Let $A_0(X)$ denote the kernel of the degree map.

\enlargethispage{25pt}

\begin{thm}\label{thm:Main-Surf}
Let $X$ be a geometrically integral and projective surface over $k$ with
isolated singularities. Let $U \inj X$ denote the regular locus of $X$.
Then there exists a reciprocity map
$\theta_X: \CH_0(X) \to \pi^{\rm ab}_1(X)$ which induces an isomorphism
$\theta^0_X: A_0(X) \xrightarrow{\simeq} \pi^{\rm ab}_1(U)^0$.
\end{thm}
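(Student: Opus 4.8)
The plan is to reduce everything to the class field theory for the regular open surface $U$ as developed by Kato and Saito, and then to identify the generalized idèle class group appearing there with $\CH_0(X)$ via the $K$-cohomological description already established. Since $X$ has isolated singularities, $U = X_{\rm reg}$ is the complement in $X$ of a finite closed set, so $U$ is a smooth quasi-projective surface over $k$ with a preferred projective compactification $X$ whose boundary is $0$-dimensional. First I would invoke the main result of \cite{KS-2}: the reciprocity map of Kato--Saito furnishes an isomorphism between a suitably defined idèle class group $C(U)$ (built from the Parshin--Kato complex attached to the pair $(X, X_{\rm sing})$, using that the boundary has dimension zero so there are no modulus conditions to impose) and $\pi^{\rm ab}_1(U)$; restricting to degree zero gives an isomorphism with $\pi^{\rm ab}_1(U)^0$, and the target is finite by the Kato--Saito finiteness theorem. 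The point is then to match this idèle class group with $H^2(X, \sK^M_{2,X})$.

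Next I would compare the Kato--Saito idèle complex with the complex $\sC^0_X$ (equivalently, with the $K$-cohomology resolution from \lemref{lem:Milnor-res} and \propref{prop:Milnor-res-*}). Concretely: the generalized idèle class group of Kato--Saito for the pair $(X, S)$ with $S = X_{\rm sing}$ is by construction the cokernel of a map
\[
\coprod_{x \in X^1} K^M_1(k(x)) \oplus (\text{local terms at } S) \to \coprod_{x \in X^2} K^M_0(k(x)) \oplus (\text{terms at } S),
\]
and after discarding the curves and points meeting the finite set $S$ — which contribute nothing to $H^2$ because $S$ is $0$-dimensional, exactly as in the proof of \propref{prop:Surface-0}(2) and \corref{cor:Surface-0-*1} — this cokernel is identified with $H^2(X, \sK^M_{2,X})$. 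By \corref{cor:Surface-0-*1}, the canonical map $\CH_0(X) \to H^2(X, \sK^M_{2,X})$ is an isomorphism. Composing, we obtain $\theta_X \colon \CH_0(X) \xrightarrow{\simeq} C(U) \to \pi^{\rm ab}_1(X)$, and on degree-zero parts $\theta^0_X \colon A_0(X) \xrightarrow{\simeq} C_0(U) \xrightarrow{\simeq} \pi^{\rm ab}_1(U)^0$, a map of finite groups. I would also check that the degree maps are compatible — both sides count degrees of closed points of $U$ — so that $\theta_X$ restricts to $\theta^0_X$ as claimed, and note that $\theta_X$ being an isomorphism onto a dense subgroup of $\pi^{\rm ab}_1(X)$ is automatic once $\theta^0_X$ is an isomorphism and the degree map is surjective, via \eqref{eqn:Fund} applied to $X$ together with $\pi^{\rm ab}_1(U) \surj \pi^{\rm ab}_1(X)$.

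The main obstacle I expect is the bookkeeping in the identification of the Kato--Saito idèle class group with $H^2(X,\sK^M_{2,X})$: one must be careful that \cite{KS-2} works with the regular scheme $U$ and a compactification together with the boundary, and that the reciprocity map there is built from local reciprocity maps whose compatibility with the cycle class map $cyc_X$ of Section~\ref{sec:CFT*-0} needs to be verified — essentially that a closed point of $U$ maps to its Frobenius in the expected way on both sides. A secondary point is to make sure the comparison isomorphism $\sK^M_{2,X} \xrightarrow{\simeq} \sK_{2,X}$ and the excision of the $S$-supported terms is valid in precisely the form needed to pass between the Parshin complex of \cite{KS-2} and the flasque resolution of \lemref{lem:Milnor-res}; this is where \propref{prop:Surface-0} and \corref{cor:Surface-0-*1} do the real work, so the proof is mostly a matter of assembling those statements with the Kato--Saito theorem rather than proving anything new.
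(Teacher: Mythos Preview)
Your proposal is correct and follows essentially the same route as the paper. The paper first defines $\theta_X$ explicitly on closed points by Frobenius and then checks it factors through $\CH_0(X)$ by identifying it with the Kato--Saito reciprocity map $\rho_X: \varprojlim_{S} H^2(X,\sK^M_{2,(X,S)}) \to \pi^{\rm ab}_1(U)$ via \corref{cor:Surface-0-*1} and \cite[Proposition~3.6]{KS-2}, whereas you build $\theta_X$ as the composite from the outset; the isomorphism of $\theta^0_X$ is then deduced (the paper cites \cite[Theorem~6.2]{Raskind}, which packages the relevant Kato--Saito result), and your anticipated ``main obstacle'' of Frobenius compatibility is exactly the content of the cited \cite[Proposition~3.6]{KS-2}.
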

\begin{proof}
To construct the reciprocity map 
$\theta_X: \CH_0(X) \to \pi^{\rm ab}_1(U)$,
we let $x \in U$ be a closed point and consider the inclusion
$\iota^x: \Spec(k(x)) \inj U$. This induces a natural map
$\iota^x_*: \pi^{\rm ab}_1(\Spec(k(x))) \to \pi^{\rm ab}_1(U)$.
We set $\theta_X([x]) = \iota^x_*(F_x)$, where $F_x$ is the Frobenius
element of $\pi^{\rm ab}_1(\Spec(k(x))) \simeq {\rm Gal}({\ov{k(x)}}/{k(x)})$.
Extending linearly, we get the reciprocity map
\begin{equation}\label{eqn:Rec-Normal}
\theta_X: \sZ_0(X) \to \pi^{\rm ab}_1(U).
\end{equation}

To show that $\theta_X$ kills rational equivalences,
recall from \cite[\S~3]{KS-2} that there is a reciprocity
map $\rho_X: {\underset{S \subseteq X_{\rm sing}}\varprojlim} \ 
H^2(X, \sK^M_{2,(X,S)}) \to \pi^{\rm ab}_1(U)$.
It follows from \corref{cor:Surface-0-*1} that this map factors through  
a reciprocity map $\rho_X: H^2(X, \sK^M_{2,X}) \to \pi^{\rm ab}_1(U)$.

We now consider the maps $\sZ_0(X) \xrightarrow{\eta_X} H^2(X, \sK^M_{2,X}) 
\xrightarrow{\rho_X} \pi^{\rm ab}_1(U)$, where the first map is the composite
$\sZ_0(X) \surj \CH_0(X) \to H^2(X, \sK^M_{2,X})$ given by 
\propref{prop:Milnor-res-*}. To prove that $\theta_X$ kills rational
equivalences, it suffices therefore to show that $\theta_X = \rho_X \circ
\eta_X$. For this, it suffices to show that given any closed point
$x \in U$ and any finite separable field extension $k(X) \inj L$ in which
$U$ is unramified, the composite
$K_0(k(x)) \simeq H^2_{x}(X, \sK^M_{2,X}) \to H^2(X, \sK^M_{2,X}) \to 
\pi^{\rm ab}_1(U) \to {\rm Gal}(L/{k(X)})$ takes the element $1 \in K_0(k(x))$ 
to the Frobenius substitution $F_x$ over $x$. But this follows from
\cite[Proposition~3.6]{KS-2}.

It is clear from the above construction that $\theta_X$ induces a 
map $\theta^0_X: A_0(X) \to \pi^{\rm ab}_1(U)^0$. The assertion that this
map is an isomorphism follows from \corref{cor:Surface-0-*1} and
\cite[Theorem~6.2]{Raskind}. 
\end{proof}

\subsection{Proof of  \thmref{thm:KES-surf}}\label{sec:Main-2-prf}
Let $X$ be an integral normal projective surface over $k$ and let
$S$ denote the singular locus of $X$ with reduced induced closed subscheme
structure. Set $U = X_{\rm reg}$. 
Let $f: \wt{X} \to X$ denote a resolution of singularities 
of $X$. Recall that such a resolution of singularities exists for surfaces 
over any field.
Let $E \subsetneq \wt{X}$ denote the reduced exceptional divisor.
We identify $f^{-1}(U)$ with $U$ in what follows. 
We begin by recalling the Chow group of 0-cycles with modulus from 
\cite{KeS}.
%\begin{prop}\label{prop:Cyc-class-mod}

Let $D \subsetneq \wt{X}$ be an effective Cartier divisor supported on $E$.
Let $\sZ_0(\wt{X}, D)$ denote the free abelian group
on closed points in $\wt{X} \setminus D$. Let $C \inj \wt{X} \times \P^1_k$
be a closed irreducible curve satisfying
\begin{enumerate}
\item
$C$ is not contained in $\wt{X} \times \{0, 1, \infty\}$.
\item
If $\nu: C^N \to \wt{X} \times \P^1_k$ denotes the composite map from the
normalization of $C$, then one has an inequality of Weil divisors on $C^N$:
\[
\nu^*(D \times \P^1_k) \le \nu^*(\wt{X} \times \{1\}).
\]

\end{enumerate}
We call such curves admissible. Let $\sZ_1(\wt{X}, D)$ denote the
free abelian group on admissible curves and let $\sR_0(\wt{X}, D)$ denote
the image of the boundary map 
$(\partial_{0} - \partial_{\infty}): \sZ_1(\wt{X}, D) \to \sZ_0(\wt{X}, D)$.
The Chow group of 0-cycles on $\wt{X}$ with modulus $D$ is defined as
the quotient
\[
\CH_0(\wt{X}, D) := \frac{\sZ_0(\wt{X}, D)}{\sR_0(\wt{X}, D)}.
\]

\begin{prop}\label{prop:Cyc-class-mod}
There is a cycle class map $cyc_{(\wt{X},D)}:\CH_0(\wt{X}, D) \to K_0(X,D)$.
\end{prop}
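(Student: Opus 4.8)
The plan is to build the cycle class map by sending a closed point $x \in \wt{X} \setminus D$ to the class of its structure sheaf, suitably lifted to the relative $K$-group $K_0(X,D)$, and then to check that admissible rational equivalences are killed. First I would note that since $D$ is supported on $E = f^{-1}(S)$, we have $\wt{X}\setminus D \subseteq f^{-1}(U) = U$, which we have identified with the regular locus of $X$; hence every closed point $x$ of $\wt{X}\setminus D$ is a regular point of $X$ lying away from $D$, and in particular away from $f(D)$. As in the discussion preceding \lemref{lem:CCS-0}, the map of pairs $(\Spec k(x),\emptyset) \to (\wt X, D)$ has finite tor-dimension (regularity of $x$ on $\wt X$), so one gets $cyc([x]) \in K_0(\wt X, D)$ as the image of $1 \in K_0(k(x))$. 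But I actually want to land in $K_0(X,D)$: since $x$ lies in $U$, which $f$ maps isomorphically onto an open of $X$ missing $S$, I can equally regard $(\Spec k(x),\emptyset)\to(X,D)$ directly and take the image of $1\in K_0(k(x))$ under the induced map $K_0(k(x))\to K_0(X,D)$. Extending $\Z$-linearly gives a homomorphism $cyc: \sZ_0(\wt X, D) \to K_0(X,D)$.

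The substantive step is to show this homomorphism kills $\sR_0(\wt X, D)$, i.e.\ that for an admissible curve $C \inj \wt X \times \P^1_k$ with normalization $\nu: C^N \to \wt X \times \P^1_k$, the $0$-cycle $\partial_0(C) - \partial_\infty(C)$ has zero cycle class in $K_0(X,D)$. The key point is the modulus condition $\nu^*(D\times\P^1) \le \nu^*(\wt X\times\{1\})$: this says the rational function on $C^N$ obtained by composing with the projection to $\P^1$ is congruent to $1$ modulo the pullback of $D$, so it defines an element of $\sO^\times_{C^N, \nu^*D}$ in the relative sense (it lies in $1 + \sI$ at each point above $D$). One then pushes forward along $C^N \to \wt X$ (which lands in $X$ once we remember $C$ meets $D$ only in the allowed way): the divisor $\partial_0(C)-\partial_\infty(C)$ is the pushforward of the divisor of this relative unit on $C^N$, and the standard argument — that the relative $K$-theory cycle class is insensitive to the divisor of a relative unit — shows the class vanishes in $K_0(X,D)$. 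Concretely, one factors through $\pi_*: K_0(C^N, \nu^*D\cap C^N) \to K_0(X,D)$ and uses that the divisor of a function in $1+\sI$ maps to $0$ in $K_0$ of the curve relative to $\nu^*D$, because $K_0$ of a curve relative to a divisor is $\Pic$ of the curve relative to that divisor, and a relative unit has trivial relative divisor class.

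I expect the main obstacle to be bookkeeping about which ambient scheme the relevant $K$-groups live over and why the pushforward $C^N \to X$ exists at the level of relative $K$-theory — one must be careful that although admissible curves live in $\wt X\times\P^1$, the resulting $0$-cycles live in $U$ and hence can be transferred to $X$, and that the modulus condition is exactly what guarantees the lift to $K_0(X,D)$ rather than merely to $K_0(X)$. Once that framework is set up, the vanishing of $cyc(\partial_0 C - \partial_\infty C)$ reduces to the classical fact (already invoked via \propref{prop:0-cyc-curve} and the relative Picard/$K_0$ identification in \exref{exm:failure}) that on a curve, relative rational equivalence is detected by the relative Picard group. I would therefore organize the proof as: (i) define $cyc$ on $\sZ_0(\wt X, D)$ using finite tor-dimension and the identification $\wt X \setminus D \subseteq U$; (ii) given an admissible $C$, produce from the modulus inequality a relative unit on $C^N$; (iii) identify $\partial_0 C - \partial_\infty C$ with the pushforward of its relative divisor and conclude vanishing in $K_0(X,D)$ via the relative Picard group of the curve. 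This yields the desired factorization $cyc_{(\wt X, D)}: \CH_0(\wt X, D) \to K_0(X,D)$.
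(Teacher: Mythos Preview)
Your overall strategy matches the paper's: define $cyc$ on generators via the finite-tor-dimension inclusion of a closed point, then for an admissible curve $C$ push forward from the pair $(C^N, g^*D)$ along the finite map $g:C^N\to\wt{X}$ and reduce to the curve case, where the identification $\CH_0(C^N,g^*D)\simeq\Pic(C^N,g^*D)\simeq K_0(C^N,g^*D)$ (the paper isolates this as \lemref{lem:Cyc-class-curve}) gives the vanishing. The paper makes explicit the step you flag as the main obstacle: the existence of $g_*:K(C^N,g^*D)\to K(\wt{X},D)$ requires tor-independence of $C^N$ and $D$ over $\wt{X}$, and since $D$ is Cartier the only possible obstruction is $\sI_D$-torsion in $\sO_{C^N}$, which vanishes because $C^N$ is integral and $g^*D$ is a \emph{proper} divisor (this last is where the modulus condition enters).

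One genuine issue: the target in the statement is a typo for $K_0(\wt{X},D)$ --- note that $D$ is a divisor on $\wt{X}$, not on $X$, and the paper's own proof as well as the subsequent \propref{prop:RES-Norm} work with $K_0(\wt{X},mE)$. Your attempt to interpret the statement literally and route the construction through $X$ would break down: the composite $C^N\to\wt{X}\xrightarrow{f}X$ need not be finite once the image of $C^N$ meets the exceptional locus $E$, so the relative push-forward on $K$-theory is not available in that direction. If you work entirely over $\wt{X}$, your outline goes through and coincides with the paper's argument.
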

\begin{proof}
This is a special case of the more general construction of the 
cycle class map by Binda and Krishna \cite{BK}.
They give a functorial construction of the cycle class map
$cyc_{(\wt{X},D)}:\CH_0(\wt{X}|D, n) \to K_n(X,D)$ from the
higher Chow groups with modulus to the higher relative $K$-groups.
A completely different construction of this cycle class map is
also given in \cite{Binda}.
We reproduce the construction of \cite{BK} in the present special case
for the sake of completeness.

Let $x \in \wt{X} \setminus D$ be a closed point with residue field $k(x)$
and let $\iota^x: \Spec(k(x)) \inj \wt{X}$ denote the inclusion.
Then the composite map $K(\Spec(k(x))) \xrightarrow{\iota^x_*} K(\wt{X}) \to
K(D)$ is null-homotopic and hence there is a unique factorization
$\iota^x_*:K(\Spec(k(x))) \to K(\wt{X}, D) \to K(\wt{X})$.
We set $[x] = \iota^x_*(1) \in K_0(\wt{X}, D)$.  
Extending this linearly, we get a cycle class map
$cyc_{(\wt{X}, D)}: \sZ_0(\wt{X}, D) \to K_0(\wt{X}, D)$. 

Our next task is to show that this map has the desired factorization.
So let $C \subsetneq \wt{X} \times \P^1_k$ be an irreducible admissible curve.
Let $C^N$ denote the normalization of $C$ and let
$g: C^N \to \wt{X}$ be the projection map.
If $C$ lies over a closed point of $\wt{X}$, then it is immediate that 
$cyc_{(\wt{X}, D)}(\partial([C])) = 0$.
So we can assume that $C$ does not lie over a closed point.
In that case, the map $g: C^N \to \wt{X}$ is finite. 
This gives rise to a Cartesian square
\begin{equation}\label{eqn:Cyc-mod-*2}
\xymatrix@C1pc{
C^N \ar@{^{(}->}[dr]^{\phi} \ar@/_2pc/[ddr]_{{\rm id}} \ar@/^1pc/[drr]^{\nu} & & \\
& C^N \times \P^1_k \ar[r] \ar[d]_{p'} &
\wt{X} \times \P^{1}_k \ar[d]^{p} \\
& C^N \ar[r]_{g} & \wt{X}.}
\end{equation}

The finiteness of $g$ and admissibility of $C$ imply that 
$C^N = \phi(C^N)$ is an admissible cycle on $C^N \times \P^1_k$.
We set $D' = g^*(D)$. Notice that a consequence
of the admissibility condition is that $D'$ is a proper Cartier divisor on 
$C^N$. Our first claim is the following:

\vskip .3cm

\noindent
{\bf Claim:}
The finite map $g$ induces a push-forward map between $K$-theory spectra
$g_*: K(C^N,D') \to K(\wt{X}, D)$.

\vskip .1cm

{\sl Proof of the claim:}
Since $g$ is a morphism between non-singular schemes, it is of finite
tor-dimension and hence there is a push-forward map of $K$-theory spectra
$g_* : K(C^N) \to K(\wt{X})$. To prove the claim, it is enough to show
that the map $D' \to D$ is a also of finite tor-dimension.
To show this, all we need to know is that $C^N$ and $D$
are tor-independent over $\wt{X}$. 
Since $D$ is an effective Cartier divisor, only 
possible tor term will be ${\rm Tor}^1_{\sO_{\wt{X}}}(\sO_{C^N}, \sO_{D})$
which is same as $\sI_{D}$-torsion subsheaf of $\sO_{C^N}$. Since
$C^N$ is integral, this torsion subsheaf is non-zero if and only if
the ideal $\sI_{D'}$ is zero. But this can not happen as $D'$ is a proper
divisor on $C^N$. This proves the claim.

It is easy to check from the above construction that
\begin{equation}\label{eqn:RES-Norm-0}
\partial^{\epsilon}([C]) = g_*(\partial^{\epsilon}([C^N]) \ \ \mbox{for} \ \
\epsilon = 0, \infty \ \ \mbox{and}
\end{equation}
\[
cyc_{(\wt{X},D)} \circ g_*(\alpha) = g_* \circ cyc_{(C^N, D')}(\alpha) 
\ \ \mbox{for} \ \alpha \in \sZ_0(C^N, D').
\]

This reduces the problem of constructing the cycle class map to the case
when $\wt{X}$ is replaced by a smooth curve $C$ and $D$ is 
an effective Cartier divisor on $C$. 
This case follows from Lemma~\ref{lem:Cyc-class-curve} below.
\end{proof} 

%\enlargethispage{25pt}

\begin{lem}\label{lem:Cyc-class-curve}
Let $C$ be a smooth curve and let $D \subsetneq C$ be an effective Cartier
divisor. Then there is a cycle class map 
$cyc_{(C,D)}: \CH_0(C,D) \to K_0(C,D)$
which induces an isomorphism $\CH_0(C,D) \xrightarrow{\simeq} \Pic(C,D)$.
\end{lem}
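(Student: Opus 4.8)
The plan is to identify $\CH_0(C,D)$ with the relative Picard group $\Pic(C,D)$ directly from the definitions and then to produce the cycle class map as the composite of this identification with the standard determinant/first-Chern-class map $\Pic(C,D) \to K_0(C,D)$. First I would recall that for a smooth curve $C$ the Chow group with modulus $\CH_0(C,D)$ as defined in \secref{sec:Main-2-prf} is simply $\sZ_0(C,D)$ modulo divisors of rational functions $f \in k(C)^\times$ that are congruent to $1$ modulo $D$ (i.e., lying in $\sO^{\times}_{C,D}$ in the earlier notation); this is because for a curve an admissible cycle on $C \times \P^1$ either lies over a point (contributing nothing) or is finite over $C$, in which case after normalizing and pushing forward one sees the boundary $\partial_0 - \partial_\infty$ produces exactly $(f)_C$ for such an $f$. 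On the other side, the relative Picard group $\Pic(C,D)$ fits into the exact sequence $\sO^\times_{C,D}$-cosets $\to \Div(C\setminus D) \to \Pic(C,D) \to 0$ coming from the Mayer–Vietoris / relative units sequence $0 \to \sO^\times_{C,D} \to k(C)^\times \to \sZ_0(C\setminus D) \to \Pic(C,D) \to 0$, where I use that on a smooth curve every effective Cartier divisor disjoint from $D$ is a sum of closed points disjoint from $D$ and that $H^1$ of the constant-sheaf comparison vanishes for the function field. Comparing the two presentations gives the canonical isomorphism $\CH_0(C,D) \xrightarrow{\simeq} \Pic(C,D)$.

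\medskip

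It then remains to construct the map to $K_0(C,D)$ and check it is compatible. For a closed point $x \in C \setminus D$ the inclusion $\iota^x\colon \Spec(k(x)) \inj C$ has the property that the composite $K(\Spec k(x)) \to K(C) \to K(D)$ is null-homotopic (the image misses $D$), so as in the proof of \propref{prop:Cyc-class-mod} it lifts canonically to $K(C,D)$; setting $[x] \mapsto \iota^x_*(1)$ and extending linearly gives $cyc_{(C,D)}\colon \sZ_0(C,D) \to K_0(C,D)$. To see it kills $\sR_0(C,D)$, one observes that for $f \in \sO^\times_{C,D}$ the class $\sum_i (\ord_{x_i}f)[x_i]$ is the image under the relative units sequence of $f$, hence maps to $0$ in the relative Picard group, and the cycle class map factors (via the localization sequence for relative $K$-theory of the pair $(C,D)$ with the open $C \setminus D$) through $\Pic(C,D) \hookrightarrow K_0(C,D)$; alternatively, one invokes the relative $K$-theory localization/excision sequence directly. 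This yields the factorization $\CH_0(C,D) \xrightarrow{\simeq} \Pic(C,D) \to K_0(C,D)$, and the first map's inverse composed with the $c_1$ map shows the induced map $\CH_0(C,D) \to \Pic(C,D)$ is the stated isomorphism.

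\medskip

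The main obstacle I expect is bookkeeping with the relative $K$-theory and Picard groups of the pair $(C,D)$ rather than anything deep: one must be careful that $\Pic(C,D)$ is defined as $H^1$ of the relative unit sheaf (equivalently as isomorphism classes of line bundles with a trivialization along $D$) and that the localization sequence for the relative $K$-theory spectrum of $(C,D)$ along the closed complement $D' = $ points of $D$ (here $D$ itself, viewed as a Cartier divisor) splits off the required $\Pic$-summand; the comparison of the admissible-cycle boundary with divisors of units in $\sO^\times_{C,D}$ is routine for curves but should be spelled out since the modulus condition $\nu^*(D\times\P^1) \le \nu^*(\wt X \times \{1\})$ specializes, on a curve finite over $C$, exactly to the condition $f \equiv 1 \bmod D$. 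No new geometric input beyond smoothness of $C$ and the elementary structure of Cartier divisors on a curve is needed.
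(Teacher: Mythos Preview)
Your proposal is correct and follows essentially the same route as the paper: both arguments first identify the modulus rational equivalence on a smooth curve with divisors of rational functions $f \equiv 1 \pmod{\sI_D}$, and both then appeal to a localization exact sequence (you phrase it via the relative-units presentation of $\Pic(C,D)$, while the paper runs it directly in relative $K$-theory via the homotopy fiber sequence $\coprod_{x\notin D} K(k(x)) \to K(C,D) \to K(\sO_{C,D}, I)$, which amounts to the same computation). One small slip to fix: in the paper's \S\ref{section:LW} notation, $\sO^{\times}_{C,D}$ denotes \emph{all} units of the semi-local ring $\sO_{C,D}$, not those congruent to $1$ modulo $D$; the subgroup you actually want is $(1+I)^{\times} \simeq K_1(A,I)$ with $A=\sO_{C,D}$, which is exactly what the paper uses.
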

\begin{proof}
We let $\sR'_0(C,D)$ denote the subgroup of $\sZ_0(C,D)$ generated by
the cycles ${\rm div}(f)$, where $f \in \sO^{\times}_{C,D} \inj k(C)^{\times}$
is such that $f \equiv 1$ modulo $\sI_D$.
Using ~\eqref{eqn:Cyc-mod-*2}, it is not difficult to check that 
$\sR'_0(C,D) = \sR_0(C,D)$.
So we shall use this new definition of rational equivalences.

Let $A$ denote the semi-local ring $\sO_{C,D}$ and let $I$ denote the ideal
of $D$ in $A$ giving an exact sequence
\[
0 \to K_1(A,I) \to K_1(A) \to K_1(A/I) \to 0.
\]

We now consider the commutative diagram of homotopy fiber sequences:
\[
\xymatrix@C1pc{
{\underset{x \notin D}\coprod} K(k(x)) \ar[r] & K(C) \ar[r] \ar[d] & 
K(A) \ar[d] \\
& K(A/I) \ar@{=}[r] & K(A/I).}  
\]
This yields a homotopy fiber sequence
\[
{\underset{x \notin D}\coprod} K(k(x)) \to K(C,D) \to K(A,I)
\]
and in particular, an exact sequence
\[
K_1(A,I) \xrightarrow{\partial} \sZ_0(C, D) \to K_0(C,D) \to 0
\]
and we conclude from this that 
\[
{\rm Coker}(\partial) = \CH_0(C, D) \xrightarrow{\simeq} K_0(C, D).
\]
Finally, it is easy to check that $\Pic(C, D) \simeq K_0(C,D)$. 
\end{proof}

\begin{prop}\label{prop:RES-Norm}
Let $f: \wt{X} \to X$ be a resolution of singularities of a normal
projective surface $X$ as above. 
For any $m \ge 1$, there exists a commutative diagram
\begin{equation}\label{eqn:0-C-map-I-0}
\xymatrix@C2pc{
\CH_0(X) \ar@{->>}[r]^-{cyc_{(X,mS)}} \ar@{->>}[d]_{f^*} & F^{2}K_0(X,mS) 
\ar@{->>}[d]^{f^*} \ar[dr]^{\simeq} & \\
\CH_0(\wt{X},mE) \ar@{->>}[r]_>>>>{cyc_{(\wt{X}, mE)}} \ar@{->>}[d] & 
F^{2}K_0(\wt{X},mE)
\ar@{->>}[d] & F^{2}K_0(X) \ar@{->>}[dl]^{f^*} \\
\CH_0(\wt{X}) \ar@{->>}[r]_-{cyc_{\wt{X}}} & F^{2}K_0(\wt{X}). &}
\end{equation}
such that all arrows are surjective.
Moreover, all arrows in the top square are isomorphisms for all $m \gg 1$.
In particular, the cycle class map $\CH_0(\wt{X},mE) \to K_0(\wt{X},mE)$
is injective for all $m \gg 1$.
\end{prop}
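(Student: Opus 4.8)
The plan is to realize every map in the diagram at the level of cycle groups and then isolate the one substantive input, a continuity theorem of Krishna--Srinivas.

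Since $f$ restricts to an isomorphism over $U := X_{\rm reg}$ and $\wt{X} \setminus E = f^{-1}(U)$, the groups $\sZ_0(X)$ and $\sZ_0(\wt{X}, mE)$ are literally equal --- both are the free abelian group on the closed points of $U$ --- and I take every ``$f^*$'' in the diagram to be induced by this identification. The point is that it descends to Chow groups, which amounts to the inclusion $\sR_0(X) \subseteq \sR_0(\wt{X}, mE)$: by \lemref{lem:RE} and the reduction in the proof of \propref{prop:Surface-0}, $\sR_0(X)$ is generated by divisors $(f)_C$ of rational functions on smooth curves $C \subseteq U$; such a $C$, viewed as a closed curve in $\wt{X}$, is disjoint from $E$, so the graph $\Gamma \subseteq \wt{X} \times \P^1$ of $f\colon C \dto \P^1$ is admissible for the modulus $mE$ --- the inequality $\nu^*(mE \times \P^1) \le \nu^*(\wt{X} \times \{1\})$ holding with vanishing left-hand side --- and $(\partial_0 - \partial_\infty)(\Gamma) = (f)_C$. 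Hence $f^*$ induces a surjection $\CH_0(X) \surj \CH_0(\wt{X}, mE)$ for every $m \ge 1$. Combining this with the cycle class isomorphisms $\CH_0(X) \xrightarrow{\simeq} F^2K_0(X, mS) \xrightarrow{\simeq} F^2K_0(X)$ of \propref{prop:Surface-0}(1) and \lemref{lem:CCS-0}, with the surjection $cyc_{(\wt{X}, mE)}\colon \CH_0(\wt{X}, mE) \surj F^2K_0(\wt{X}, mE)$ of \propref{prop:Cyc-class-mod} (onto $F^2K_0(\wt{X}, mE)$ by definition of the target), and with the ``forget the modulus'' surjections $\CH_0(\wt{X}, mE) \surj \CH_0(\wt{X}) \surj F^2K_0(\wt{X})$ --- surjective because a $0$-cycle on the smooth projective surface $\wt{X}$ is rationally equivalent to one supported off $E$ --- yields all arrows of the diagram; the remaining $f^*$'s between the $F^2K_0$-groups are then forced on cycle classes and are well-defined, since $\sR_0(X)$ is by \propref{prop:Surface-0}(1) and \lemref{lem:CCS-0} exactly the kernel of $\sZ_0(X)$ mapping to $F^2K_0(X,mS)$ and to $F^2K_0(X)$. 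All arrows are surjective, and every square and triangle commutes because all composites agree on the generating $0$-cycles of $U$.

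It remains to prove the four arrows of the top square are isomorphisms for $m \gg 1$. Since $cyc_{(X,mS)}$ is an isomorphism, the square commutes, and all its arrows are surjective, it is enough to show $f^*\colon F^2K_0(X, mS) \to F^2K_0(\wt{X}, mE)$ is injective for $m \gg 1$: the square then forces $f^*\colon \CH_0(X) \to \CH_0(\wt{X}, mE)$ and $cyc_{(\wt{X}, mE)}$ to be isomorphisms as well, so in particular $\CH_0(\wt{X}, mE) \to K_0(\wt{X}, mE)$ is injective. Now $f^*$ is surjective and respects degrees --- the degree map on $F^2K_0(\wt{X}, mE)$ being induced by forgetting the modulus and the Bloch--Quillen isomorphism $F^2K_0(\wt{X}) \cong \CH_0(\wt{X})$ for the smooth surface $\wt{X}$ --- so $\ker(f^*|_m)$ is contained in $A_0(X)$, which is \emph{finite} by \thmref{thm:Main-Surf}. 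The subgroups $\ker(f^*|_m)$ decrease with $m$ (the transition surjections $F^2K_0(\wt{X}, m'E) \surj F^2K_0(\wt{X}, mE)$, $m' \ge m$, being $f^*$-compatible), and their intersection is $\ker\bigl(F^2K_0(X, mS) \to \varprojlim_m F^2K_0(\wt{X}, mE)\bigr)$; this is zero by a continuity theorem of Krishna--Srinivas \cite{KSri} (the natural map $F^2K_0(X) \to \varprojlim_m F^2K_0(\wt{X}, mE)$ is an isomorphism), using \lemref{lem:CCS-0} to identify $F^2K_0(X, mS)$ with $F^2K_0(X)$. A decreasing chain of subgroups of a finite group with trivial intersection is eventually trivial, so $f^*|_m$ is injective for all $m \gg 1$.

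The hard part is the input from \cite{KSri}: that passing to large modulus on a resolution produces no rational equivalences beyond the Levine--Weibel ones coming from $X$, i.e.\ $\CH_0(X) \inj \varprojlim_m \CH_0(\wt{X}, mE)$ (equivalently, in $K$-theory, $F^2K_0(X) \xrightarrow{\simeq} \varprojlim_m F^2K_0(\wt{X}, mE)$). Granting this, the finiteness of $A_0(X)$ from \thmref{thm:Main-Surf} is precisely the device that upgrades ``vanishing in the inverse limit'' to ``vanishing at a finite stage'', and the rest is formal. The only routine points requiring attention are the moving lemma for $0$-cycles on the smooth surface $\wt{X}$ over a finite field used above (standard, e.g.\ via a Bertini theorem of Poonen type), the identification of $F^2K_0(\wt{X})$ with $\CH_0(\wt{X})$, and the harmless fact that the cycle class map of \propref{prop:Cyc-class-mod} takes values in $K_0(\wt{X}, mE)$.
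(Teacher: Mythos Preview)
Your construction of the diagram and the verification of surjectivity and commutativity are essentially the paper's argument: the same identification $\sZ_0(X) = \sZ_0(\wt{X}, mE)$, the same graph-of-a-rational-function device to show $\sR_0(X) \subseteq \sR_0(\wt{X}, mE)$, the same appeals to \lemref{lem:CCS-0}, \propref{prop:Surface-0}, \propref{prop:Cyc-class-mod}, and the moving lemma on $\wt{X}$.

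The divergence is in the second part, and there your argument is circular. The paper invokes \cite[Theorem~1.1]{KSri} for the statement that $F^2K_0(X) \to F^2K_0(\wt{X}, mE)$ is an isomorphism \emph{for each} $m \gg 1$, not merely in the inverse limit; from this the diagram chase is immediate. You instead quote only the limit version of \cite{KSri} and try to descend to a finite stage via the finiteness of $A_0(X)$, citing \thmref{thm:Main-Surf}. But \thmref{thm:Main-Surf} asserts only the isomorphism $A_0(X) \simeq \pi^{\rm ab}_1(U)^0$; it says nothing about finiteness of either side, and for an open surface over a finite field $\pi^{\rm ab}_1(U)^0$ is certainly not finite \emph{a priori} (Artin--Schreier covers). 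In the paper's logic, finiteness of $A_0(X)$ is \propref{prop:Nor-fin}, whose proof \emph{uses} \propref{prop:RES-Norm} --- precisely the isomorphism $\CH_0(X) \xrightarrow{\simeq} F^2K_0(\wt{X}, mE)$ for $m \gg 1$ that you are trying to establish. So the stabilization step ``a decreasing chain of subgroups of a finite group with trivial intersection is eventually trivial'' has no available input for the finiteness hypothesis.

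The fix is simply to use the full strength of \cite[Theorem~1.1]{KSri}, which already delivers the isomorphism at each large $m$; your detour through $A_0(X)$ is then unnecessary, and the remainder of your diagram chase goes through.
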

\begin{proof}
Let $F^{2}K_0(\wt{X},mE)$ denote the image of $\CH_0(\wt{X}, mE)$ under
the cycle class map of \propref{prop:Cyc-class-mod}.
The maps on the $K$-theory side are obvious maps induced by the
pull-back $f^*:K(X, mS) \to K(\wt{X}, mE)$. The surjectivity of arrows in
the triangle on the right is immediate from the construction of the
underlying groups. The map $cyc_{(X, mS)}$ exists and is surjective by
\lemref{lem:CCS-0}. 
The maps $cyc_{(\wt{X}, mE)}$ and $cyc_{\wt{X}}$ clearly
are surjective.  
The map $\CH_0(\wt{X},mE) \to \CH_0(\wt{X})$ is the forgetful map which
is surjective by the moving lemma for $\CH_0(\wt{X})$. 

It is clear that there is a pull-back map $f^*: \sZ_0(X) \to
\sZ_0(\wt{X}, mE)$ which is surjective. To show that it preserves rational
equivalences, let $C \inj X$ be an integral
curve not meeting $X_{\rm sing}$ and let $h \in k(C)^{\times}$.
Let $\Gamma_h \inj C \times \P^1 \inj X \times \P^1$ be the graph of the
function $h: C \to \P^1$. It is then clear that $\Gamma_h \cap (X_{\rm sing}
\times \P^1) = \emptyset$. In particular, $f^{-1}(\Gamma_h) \cap
(E \times \P^1) = \emptyset$. This shows that $[\Gamma_h] \in
\sZ_1(\wt{X}, mE)$ is an admissible 1-cycle such that
\[
f^*({\rm div}(h)) = f^*([h^*(0)] - [h^*(\infty)]) =
f^*(\partial_{0}([\Gamma_h]) - \partial_{\infty}([\Gamma_h]))
= (\partial_{0} - \partial_{\infty})([\Gamma_h]).
\]
This shows that $f^*({\rm div}(h)) \subset \sR_0(\wt{X}, mE)$
and yields the pull-back $f^*: \CH_0(X) \surj \CH_0(\wt{X}, mE)$.
It is clear from the constructions that the diagram~\ref{eqn:0-C-map-I-0}
commutes. This proves the first part of the proposition.

We now prove the second part. \propref{prop:Surface-0} says that 
$cyc_{(X, mS)}$ is an isomorphism
and so is the map $F^2K_0(X, mS) \to F^2K_0(X)$.
On the other hand, it follows from \cite[Theorem~1.1]{KSri} that the
map $F^2K_0(X) \to F^2K_0(\wt{X})$ factors through $F^2K_0(X) \to
F^2K_0(\wt{X}, mE) \to F^2K_0(\wt{X})$ and the map
$F^2K_0(X) \to F^2K_0(\wt{X}, mE)$ is an isomorphism for all $m \gg 1$.
It follows that the right vertical arrow in the top square is an isomorphism 
for all $m \gg 1$. A simple diagram chase now shows that all arrows in the top
square are isomorphisms for all $m \gg 1$.       
\end{proof}

\begin{prop}\label{prop:Nor-fin}
Let $X$ be a connected projective surface over $k$ with isolated 
singularities. Then $A_0(X)$ is finite.
\end{prop}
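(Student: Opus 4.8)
The plan is to deduce the finiteness of $A_0(X)$ from the known finiteness of $A_0$ of a smooth projective surface over $k$, by reducing to the smooth case via normalization followed by resolution of singularities, and by controlling the resulting error term through a $K$-group of a projective curve over $k$, which is finite precisely because $k$ is finite.

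First I would reduce to the case that $X$ is an integral normal projective surface. By \propref{prop:Surface-0}(3), the normalization $\pi\colon \wt X\to X$ induces an isomorphism $\CH_0(X)\xrightarrow{\simeq}\CH_0(\wt X)$; since $\pi$ restricts to an isomorphism over the dense regular locus, on whose closed points all the relevant $0$-cycles are supported, this isomorphism is compatible with the degree maps, whence $A_0(X)\cong A_0(\wt X)$ and we may replace $X$ by $\wt X$. So assume $X$ is integral and normal, and choose a resolution of singularities $f\colon \wt X\to X$ (available for surfaces over any field), with $\wt X$ a smooth projective surface and $E\subsetneq \wt X$ the reduced exceptional divisor. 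By \propref{prop:RES-Norm}, for all $m\gg 1$ the pull-back $f^{*}\colon \CH_0(X)\to \CH_0(\wt X, mE)$ is an isomorphism, again compatible with degrees (a closed point of $X_{\rm reg}=\wt X\setminus E$ maps to itself), so $A_0(X)\cong A_0(\wt X, mE)$, where $A_0(\wt X, mE)$ denotes the kernel of ${\rm deg}\colon\CH_0(\wt X, mE)\to\Z$. The same proposition furnishes a surjective forgetful map $\CH_0(\wt X, mE)\twoheadrightarrow\CH_0(\wt X)$ through which the degree factors; it therefore restricts to a surjection $A_0(\wt X, mE)\twoheadrightarrow A_0(\wt X)$ whose kernel is exactly $\ker\big(\CH_0(\wt X, mE)\to\CH_0(\wt X)\big)$.

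Now $A_0(\wt X)$ is finite, because $\wt X$ is a smooth projective surface over the finite field $k$: this is part of \thmref{thm:CFT-KS}, which identifies $A_0(\wt X)$ with the finite group $\pi^{\rm ab}_1(\wt X)^0$. So the proposition will follow once we know that $\ker\big(\CH_0(\wt X, mE)\to\CH_0(\wt X)\big)$ is finite. For this I would pass to relative $K$-theory: combining \propref{prop:RES-Norm} with \propref{prop:Surface-0}, the cycle class maps identify $\CH_0(\wt X, mE)\cong F^{2}K_0(\wt X, mE)$ and $\CH_0(\wt X)\cong F^{2}K_0(\wt X)$, as subgroups of $K_0(\wt X, mE)$ and $K_0(\wt X)$ respectively. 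The long exact sequence of the relative spectrum $K(\wt X, mE)=\hofib\big(K(\wt X)\to K(mE)\big)$ contains $K_1(mE)\to K_0(\wt X, mE)\to K_0(\wt X)$, so any class of $F^{2}K_0(\wt X, mE)$ mapping to $0$ in $K_0(\wt X)$ lies in the image of $K_1(mE)$; hence $\ker\big(\CH_0(\wt X, mE)\to\CH_0(\wt X)\big)$ is a subquotient of $K_1(mE)$.

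It thus remains to show that $K_1(mE)$ is finite, and this is the crux of the argument, the one point where the hypothesis that $k$ is finite is genuinely used (the statement fails over $\ov k$ or over $\C$). If $E=\emptyset$ then $X$ is already smooth and there is nothing to prove, so assume $mE$ is a nonempty projective $k$-scheme of dimension $1$. Then all its coherent cohomology groups are finite-dimensional $k$-vector spaces, hence finite; in particular $H^{0}(mE,\sO_{mE})$ is a finite ring, so $H^{0}(mE,\sO^{\times}_{mE})=H^{0}(mE,\sK_{1,mE})$ is finite. Filtering $\sO_{mE}$ by the powers of its nilradical and using the conductor square for the normalization of $(mE)_{\rm red}$ reduces the computation of $K_1(mE)$ to $H^{0}(mE,\sK_{1,mE})$, to $K_1$ of Artinian $k$-algebras (finite), and to $H^{1}(-,\sK_{2})$ of the smooth projective curves lying over the components of $(mE)_{\rm red}$, which is finite by the classical finiteness results in the class field theory of curves over finite fields (alternatively, one may quote the corresponding finiteness statements from \cite{KSri}). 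Substituting into $0\to\ker\big(\CH_0(\wt X, mE)\to\CH_0(\wt X)\big)\to A_0(\wt X, mE)\to A_0(\wt X)\to 0$ exhibits $A_0(X)\cong A_0(\wt X, mE)$ as an extension of two finite groups, hence finite. Everything except the finiteness of $K_1(mE)$ is routine bookkeeping with results already in place; that finiteness is the main obstacle.
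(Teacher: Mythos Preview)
Your overall architecture matches the paper's exactly: reduce to normal $X$ via \propref{prop:Surface-0}(3), resolve to $\wt X$, identify $A_0(X)\cong A_0(\wt X,mE)$ for $m\gg 1$ via \propref{prop:RES-Norm}, and then bound $\ker\bigl(\CH_0(\wt X,mE)\to\CH_0(\wt X)\bigr)$ by a $K$-group of $mE$. The paper uses $SK_1(mE)$ (via \cite[Lemma~2.2]{KSri}) where you use all of $K_1(mE)$, but since these differ by the finite group $H^0(mE,\sO^\times_{mE})$ this is harmless.

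Where your write-up is genuinely thinner than the paper is the finiteness of $K_1(mE)$. Your trichotomy --- $H^0(mE,\sK_1)$, $K_1$ of Artinian $k$-algebras, and $H^1(C_i,\sK_2)$ for the smooth components $C_i$ --- does not account for the graded pieces of the nilradical filtration. The relative $K_1$ of a square-zero thickening $jE\hookrightarrow (j{+}1)E$ is \emph{not} $K_1$ of an Artinian ring; it is governed by $H^1$ of the relative sheaf $\sK_{2,((j+1)E,\,jE)}$, and the substantive input is that this sheaf is a quotient of a coherent $\sO_E$-module (\cite[Theorem~7.1]{KSri}), hence has finite $H^1$ over a finite field. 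The paper makes this explicit: it runs an induction on $m$, first splitting the SNC divisor into its smooth components by a Mayer-Vietoris sequence (your conductor-square idea is a fine alternative for this step), and then invoking the coherence result for each smooth $E_i$. Your parenthetical ``one may quote \cite{KSri}'' is pointing at exactly the right place, but that citation is carrying the actual weight --- the nilradical filtration alone does not do it.
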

\begin{proof}
Using \propref{prop:Surface-0}, we can assume that $X$ is normal
and hence integral.
Let $f: \wt{X} \to X$ be a resolution of singularities such that
the reduced exceptional divisor $E$ is strict normal crossing.
Using \propref{prop:RES-Norm} and the finiteness of $A_0(\wt{X})$ 
(see \cite[Theorem~1]{KS}), we reduce to showing that
${\rm Ker}(F^2K_0(\wt{X}, mE) \surj F^2K_0(\wt{X}))$ is finite.
By \cite[Lemma~2.2]{KSri}, this kernel is contained in the image of the
boundary map
$\partial:SK_1(mE) \to K_0(\wt{X}, mE)$. Hence, it suffices to prove 
inductively that $SK_1(mE)$ is finite for $m \ge 1$. 

Let $U \inj E$ be a dense open subscheme of $E$ containing
the singular locus of $E$ and set $Z = (E \setminus U)_{\rm red}$. The
localization sequence of Thomason-Trobaugh yields an exact
sequence $K_1(E \ \mbox{on} \ Z) \to SK_1(E) \to SK_1(U)$.
Since $U$ is affine, $SK_1(U)$ vanishes by \cite[Proposition~2.1]{Nestler}.
Since $Z \inj E_{\rm reg}$, the excision isomorphism
$K_1(E \ \mbox{on} \ Z) \xrightarrow{\simeq} K_1(E_{\rm reg} \ \mbox{on} \ Z)$
(see \cite[Proposition~3.19]{TT}) and the localization fiber sequence
$K(Z) \to K(E_{\rm reg}) \to K(E_{\rm reg}\setminus Z)$ show that
the map $K_1(E \ \mbox{on} \ Z) \to K_1(Z)$ is an isomorphism.
Since $K_1(Z) \simeq \sO^{\times}_Z$ is finite, the surjection
$K_1(E \ \mbox{on} \ Z) \surj SK_1(E)$ shows that $SK_1(E)$ is finite.

Assume now that $SK_1(mE)$ is finite for some $m \ge 1$. 
Let $S = \{E_1, \cdots , E_r\}$ denote the set of irreducible components
of $E$ and let $S_{ij} = E_i \cap E_j$.
By \cite[\S~6.4]{Milnor}, there is an exact sequence of Zariski sheaves
\[
{\sK}_{2,(m+1)E} \to {\oplus}_{E_i \in  S} {\sK}_{2,(m+1)E_i}
\to {\oplus}_{E_i \in S} {\oplus}_{P \in S_{ij}} {\sK}_{2,P_{(m+1)}}
\to 0
\]
on $(m+1)E$, where $P_{(m+1)}$ is the closed subscheme of $\wt{X}$ with support 
$P \in S_{ij}$ whose local ring is analytically isomorphic to
${l[[x,y]]}/{(x^{m+1}, y^{m+1})}$ for some finite field extension $k \inj l$. 
The surjectivity of the last map can be 
easily checked locally. This yields a long cohomology exact sequence
\[
{\underset{E_i \in S}\oplus} \
{\underset{P \in S_{ij}}\oplus} \ K_2(P_{(m+1)})
\to H^1((m+1)E, \sK_{2,(m+1)E}) \to 
{\underset{E_i \in S}\oplus} \
H^1((m+1)E_i, {\sK}_{2,(m+1)E_i}) \to 0.
\]
The surjection $\sO^{\times}_{P_{(m+1)}} {\underset{\Z}\otimes} 
\ \sO^{\times}_{P_{(m+1)}} \surj K_2(P_{(m+1)})$ 
implies that the first term of this exact sequence
is finite. 

Using the isomorphism
$SK_1((m+1)E) \xrightarrow{\simeq} H^1((m+1)E, \sK_{2,(m+1)E})$
(see \cite[Lemma~2.3]{KSri}), it suffices to show that each
$SK_1((m+1)E_i)$ is finite. We can thus assume that
$E$ is smooth.

The short exact sequence of sheaves
\[
\sK_{2,((m+1)E, mE)} \to \sK_{2,(m+1)E} \to \sK_{2, mE} \to 0
\]
yields an exact sequence
\[
H^1((m+1)E, \sK_{2,((m+1)E, mE)}) \to SK_1((m+1)E) \to SK_1(mE) \to 0.
\]
Since $k$ is finite, it follows from \cite[Theorem~7.1]{KSri} that  
$\sK_{2,((m+1)E, mE)}$ is a quotient of a coherent sheaf on $E$.
In particular, $H^1((m+1)E, \sK_{2,((m+1)E, mE)})$ is finite.
We conclude by induction that $ SK_1((m+1)E)$ is finite and this
finishes the proof of the proposition.
\end{proof}

{\sl Proof of \thmref{thm:KES-surf}:}
Let $U$ be the regular locus of a normal projective surface $X$ over $k$.
Since $C(U)$ does not depend on the choice of a compactification of $U$,
we take this compactification to be $\wt{X}$, where $f:\wt{X} \to X$ is
a resolution of singularities of $X$. Note that such a resolution always
exists for surfaces.

If $D \subsetneq \wt{X}$ is an effective Cartier divisor with
support $|D| \subseteq E$, then $mE - D$ must be an effective Cartier
divisor some $m \gg 1$. This implies that the canonical map
$C(U) \to {\underset{m}\varprojlim} \ \CH_0(\wt{X}, mE)$ is an
isomorphism. 
Moreover, it follows from \propref{prop:RES-Norm} that
the maps $\CH_0(X) \to C(U) \to \CH_0(\wt{X}, mE)$ are isomorphisms
for all $m \gg 1$.

Since the map $\sZ_0(X) \xrightarrow{f^*} 
\sZ_0(\wt{X}, mE)$ is identity, the reciprocity map for $X$ defines
a similar map $\sZ_0(\wt{X}, mE) \to \pi^{\rm ab}_1(U)$.
It follows now from \thmref{thm:Main-Surf} that this induces
a reciprocity map $\rho_U: C(U) \to \pi^{\rm ab}_1(U)$ and  
an isomorphism $C_0(U) \xrightarrow{\simeq}
\pi^{\rm ab}_1(U)^0$. Moreover, it follows from Propositions~\ref{prop:RES-Norm} 
and 
~\ref{prop:Nor-fin} that $C_0(U)$ is finite. This proves \thmref{thm:KES-surf}.
$\hspace*{10cm} \hfill \square$

\section{Class field theory in higher dimensions}\label{sec:CFTH}
In this section, we shall prove \thmref{thm:CFT-Sing} using an inductive
argument on the dimension of the scheme.
Let $X$ be a geometrically integral projective scheme over $k$ of dimension
$d \ge 2$ which is regular in codimension one and let $j: U \inj X$ denote the 
inclusion of the regular locus of $X$. 
We can define the reciprocity map $\theta_X: \sZ_0(X) \to \pi^{\rm ab}_1(U)$ 
exactly as we did for surfaces in \thmref{thm:Main-Surf}. 
We shall prove that this
map kills rational equivalences using the following steps.

\begin{lem}\label{lem:Rec-Cov}
Let $f:X' \to X$ be a morphism of projective schemes
such that $f^{-1}(X_{\rm sing}) \subseteq X'_{\rm sing}$. Then there is 
a commutative diagram
\begin{equation}\label{eqn:Rec-Cov-0}
\xymatrix@C1pc{
\sZ_0(X') \ar[r]^{\theta_{X'}} \ar[d]_{f_*} &  \pi^{\rm ab}_1(X'_{\rm reg}) 
\ar[d]^{f_*} \\
\sZ_0(X) \ar[r]_{\theta_{X}} &  \pi^{\rm ab}_1(X_{\rm reg}).}
\end{equation}
\end{lem}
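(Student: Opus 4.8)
The plan is to show first that the hypothesis $f^{-1}(X_{\rm sing})\subseteq X'_{\rm sing}$ is exactly what makes every arrow of~\eqref{eqn:Rec-Cov-0} meaningful, and then to verify commutativity on a single generator. First I would observe that if $x'\in X'_{\rm reg}$ then $x'\notin f^{-1}(X_{\rm sing})$, hence $f(x')\in X_{\rm reg}$; thus $f$ restricts to a morphism $f_{\rm reg}\colon X'_{\rm reg}\to X_{\rm reg}$, which induces $f_*\colon\pi^{\rm ab}_1(X'_{\rm reg})\to\pi^{\rm ab}_1(X_{\rm reg})$ by functoriality of the abelianized {\'e}tale fundamental group (in all applications below both schemes are integral, so their regular loci are connected and this is unambiguous). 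Since $f$ is a morphism of projective $k$-schemes it is proper, so it sends a closed point $x'$ to a closed point $y:=f(x')$ with $k(x')/k(y)$ a finite extension; this gives the proper push-forward $f_*\colon\sZ_0(X')\to\sZ_0(X)$, $[x']\mapsto[k(x'):k(y)]\,[y]$, and the target $[y]$ is a legitimate generator of $\sZ_0(X)$ precisely because $y\in X_{\rm reg}$.

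Next, because $\sZ_0(X')$ is free on the closed points of $X'_{\rm reg}$, it suffices to verify $f_*\circ\theta_{X'}=\theta_X\circ f_*$ on a generator $[x']$. Writing $y=f(x')$ and $n=[k(x'):k(y)]$, the key point is the factorization $f_{\rm reg}\circ\iota^{x'}=\iota^{y}\circ g$ of morphisms $\Spec k(x')\to X_{\rm reg}$, where $\iota^{x'},\iota^{y}$ are the canonical inclusions of the points used in the construction of $\theta$ and $g\colon\Spec k(x')\to\Spec k(y)$ is the morphism corresponding to the residue-field extension $k(y)\hookrightarrow k(x')$. Applying $\pi^{\rm ab}_1(-)$ to this commutative triangle of schemes yields $f_*\circ\iota^{x'}_*=\iota^{y}_*\circ g_*$.

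The remaining step is to evaluate $g_*$ on the Frobenius. Under the canonical identifications $\pi^{\rm ab}_1(\Spec k(x'))\simeq\Gal(\ov{k(x')}/k(x'))$ and $\pi^{\rm ab}_1(\Spec k(y))\simeq\Gal(\ov{k(y)}/k(y))$, and for a compatible choice of geometric points, $g_*$ is the inclusion $\Gal(\ov{k(y)}/k(x'))\hookrightarrow\Gal(\ov{k(y)}/k(y))$; since $|k(x')|=|k(y)|^{\,n}$, the Frobenius substitution $F_{x'}$ of $k(x')$ is the $n$-th power of the Frobenius substitution $F_{y}$ of $k(y)$, i.e. $g_*(F_{x'})=n\,F_{y}$ in additive notation. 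Hence
\[
f_*\bigl(\theta_{X'}([x'])\bigr)=f_*\bigl(\iota^{x'}_*(F_{x'})\bigr)=\iota^{y}_*\bigl(g_*(F_{x'})\bigr)=n\,\iota^{y}_*(F_{y})=n\,\theta_X([y])=\theta_X\bigl(n\,[y]\bigr)=\theta_X\bigl(f_*([x'])\bigr),
\]
which is the desired identity. I do not expect a genuine obstacle in this lemma: the argument is essentially formal bookkeeping that sets up the dimension induction and the covering-space arguments to follow, and the only two points that need care are that the hypothesis on $f$ is precisely what makes both vertical $f_*$'s well defined, and the elementary compatibility $F_{x'}=F_y^{\,n}$ of Frobenius elements under a degree-$n$ extension of finite fields.
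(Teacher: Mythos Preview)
Your proof is correct and follows essentially the same approach as the paper: both reduce to a single generator $[x']$, use functoriality of $\pi^{\rm ab}_1$ along the factorization $f_{\rm reg}\circ\iota^{x'}=\iota^{y}\circ g$, and finish with the elementary fact that under a degree-$n$ extension of finite fields the Frobenius satisfies $g_*(F_{x'})=nF_y$. The paper packages this via a commutative cube whose back face is exactly your Frobenius computation, but the content is identical.
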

\begin{proof}
It is clear from our assumption that $f(X'_{\rm reg}) \subseteq X_{\rm reg}$.
In particular, the vertical arrows in ~\eqref{eqn:Rec-Cov-0} are defined.
Let $x' \in  X'_{\rm reg}$ be a closed point with residue field $k(x')$ and let
$f(x') = x$ with residue field $k(x)$. We then have a diagram
\[
\xymatrix@C1pc{
\sZ_0(k(x')) \ar[rr] \ar[dd] \ar[dr]^{\iota'_*} & & \pi^{\rm ab}_1(k(x')) \ar[dd] 
\ar[dr] & \\
& \sZ_0(X') \ar[dd] \ar[rr] & & \pi^{\rm ab}_1(X') \ar[dd] \\
\sZ_0(k(x)) \ar[rr] \ar[dr] & & \pi^{\rm ab}_1(k(x)) \ar[dr] & \\
& \sZ_0(X) \ar[rr] & & \pi^{\rm ab}_1(X).}
\]  
It suffices to show that $f_* \circ \theta_{X'}(\iota'_*(1)) =
\theta_X \circ f_*(\iota'_*(1))$ in order to prove the lemma.
In other words, we need to show that all faces of the cube commute.

The left and the right faces commute by the covariant functoriality of
the fundamental group and the group of 0-cycles. The top and the bottom
faces commute by the definition of the reciprocity map. It thus suffices to
show that the back face commutes. In other words, we need to show that the
diagram
\[
\xymatrix@C1pc{
\Z \ar[r]^<<<{\theta'} \ar[d]_{d} & {\rm Gal}({\ov{k}}/L) \ar[d]^{f_*} \\
\Z \ar[r]_<<<{\theta} & {\rm Gal}({\ov{k}}/K)}
\]
commutes, where $K = k(x)$, $L = k(x')$, $d = [L:K]$ and
$f: \Spec(L) \to \Spec(K)$. 
The commutativity of this diagram is then equivalent to knowing that
$f_*(F_L) = dF_K$, where $F$ is the Frobenius automorphism of $\ov{k}$.
But this is obvious if we write $K = \F_q$ and $L = \F_{q^d}$.
\end{proof}

\begin{lem}\label{lem:PF-sing}
Let $f: X' \to X$ be a finite morphism of irreducible quasi-projective 
schemes over $k$ of dimension $d \ge 2$ with isolated singularities. 
Assume that $f^{-1}(X_{\rm sing}) \subseteq X'_{\rm sing}$. Then there
is a push-forward map $f_*: \CH_0(X') \to \CH_0(X)$.
\end{lem}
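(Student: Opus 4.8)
The plan is to construct the push-forward $f_*\colon \CH_0(X') \to \CH_0(X)$ at the level of cycle groups and then check that rational equivalences are preserved. On generators, define $f_*[x'] = [k(x'):k(f(x'))]\cdot [f(x')]$ for a closed point $x' \in X'_{\rm reg}$; since $f^{-1}(X_{\rm sing}) \subseteq X'_{\rm sing}$ implies $f(X'_{\rm reg}) \subseteq X_{\rm reg}$, this indeed lands in $\sZ_0(X)$ and gives a homomorphism $f_*\colon \sZ_0(X') \to \sZ_0(X)$. The content of the lemma is that $f_*(\sR_0(X')) \subseteq \sR_0(X)$.

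To verify this, I would start from a rational equivalence on $X'$ given by a function $g \in \sO^{\times}_{C',D'}$ on an irreducible Cartier curve $C' \inj X'$ with $D' = C' \cap X'_{\rm sing}$; by \lemref{lem:RE} we may assume $C'$ is irreducible. Let $C$ denote the scheme-theoretic image $f(C')$, an irreducible curve on $X$. The key point is that the composite $C' \to C$ restricted to the regular locus is finite (or the fibers over the relevant points are finite), so there is a norm map $N_{C'/C}\colon k(C')^{\times} \to k(C)^{\times}$, and one expects the projection formula
\[
f_*\big((g)_{C'}\big) = \big(N_{C'/C}(g)\big)_C
\]
as 0-cycles supported on $X_{\rm reg}$. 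This is the standard compatibility of proper push-forward of divisors with the norm on function fields (as in \cite{Fulton}, Proposition~1.4), applied here on the normalizations of $C'$ and $C$. One must then confirm that $N_{C'/C}(g)$ actually lies in $\sO^{\times}_{C,D}$ with $D = C \cap X_{\rm sing}$ — i.e. that it is a unit at the points of $C$ lying over $X_{\rm sing}$ — which follows because $g$ is a unit along $D'$ and $f^{-1}(X_{\rm sing}) \subseteq X'_{\rm sing}$ forces $f^{-1}(D) \cap C' \subseteq D'$, so the norm is a unit there. Finally one checks that $C \inj X$ is itself a Cartier curve: properness of intersection with $X_{\rm sing}$ is automatic for a curve not contained in the finite set $X_{\rm sing}$, and the local complete intersection condition at points of $C \cap X_{\rm sing}$ needs a short argument — either it can be reduced to the isolated-singularity hypothesis (so that a general such curve is a complete intersection of hyperplane sections), or one observes that for surfaces and the cycle-theoretic definition in use only the divisor class matters, so $C$ may be replaced by a Cartier curve representing the same class.

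The main obstacle I expect is the last point: ensuring that the image curve $C = f(C')$ can be taken to be Cartier on $X$, rather than merely a reduced curve meeting $X_{\rm sing}$ properly. I would handle this by exploiting that $X$ has isolated singularities, so that near each singular point $C$ is a curve through a point of a $d$-dimensional scheme and can be perturbed (or shown directly, using that $\sO_{C',x'}$ is l.c.i.\ and $f$ is finite) to satisfy the l.c.i.\ condition; alternatively, since the target of the argument is the equality of classes in $\CH_0(X)$, it suffices to know that $f_*(g)_{C'}$ is rationally equivalent to zero, and one may pass to a Cartier curve in the same linear system via a moving argument. Modulo this, the projection formula for norms and divisors is routine, and functoriality $f_* = (f'\circ f)_*$ composition is clear from the multiplicativity of degrees of residue field extensions.
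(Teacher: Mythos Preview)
Your overall strategy---define $f_*$ on cycles by degree-weighted push-forward and verify $f_*(\sR_0(X'))\subseteq\sR_0(X)$ via the norm formula $f_*((g)_{C'})=(N_{C'/C}(g))_C$ from \cite{Fulton}---is exactly the paper's. The difference is that you are making the argument harder than necessary, and the obstacle you flag (whether the image curve $C=f(C')$ is Cartier on $X$) simply does not arise in the paper's proof.

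The point you are missing is the full strength of the isolated-singularity hypothesis together with \lemref{lem:RE}. Since $\dim X'\ge 2$ and $X'_{\rm sing}$ is a finite set, the generators of $\sR_0(X')$ may be taken to come from irreducible curves $C'$ with $C'\cap X'_{\rm sing}=\emptyset$ (this is the reduction from \cite{BS-1} behind \lemref{lem:RE}, already used in the proof of \propref{prop:Surface-0}). Once $C'\subset X'_{\rm reg}$, the assumption $f^{-1}(X_{\rm sing})\subseteq X'_{\rm sing}$ forces $C=f(C')\subset X_{\rm reg}$. Then $C$ is trivially a Cartier curve on $X$, the set $D=C\cap X_{\rm sing}$ is empty, and there is nothing to check about units along $D$ or local complete intersection conditions at singular points: the norm formula from \cite{Fulton} gives $f_*({\rm div}(g))={\rm div}(N(g))\in\sR_0(X)$ immediately. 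Your proposed perturbation/moving argument to make $C$ Cartier, and your verification that $N_{C'/C}(g)\in\sO^{\times}_{C,D}$, are therefore unnecessary detours.
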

\begin{proof}
Our assumption implies that there is a push-forward map
$f_*: \sZ_0(X') \to \sZ_0(X)$. Let $C' \subsetneq X'$ be an irreducible curve
let $h \in k(C')^{\times}$. As $\dim(X') \ge 2$ and 
it has only isolated singularities, we must have $C' \cap X'_{\rm sing} =
\emptyset$. Setting $C = f(C')$, we see that $C' \subsetneq X_{\rm reg}$
is an irreducible curve. Let $N: k(C') \to k(C)$ denote the norm map.
It follows now from \cite{Fulton} that 
$f_*({\rm div}(h)) = {\rm div}(N(h)) \in \sR_0(X)$.
In particular, $f_*$ preserves the groups of rational equivalences
and yields the desired map on the Chow groups. 
\end{proof}

\begin{prop}\label{prop:Kill-Rat}
Let $X$ be a integral projective scheme over $k$ of dimension
$d \ge 2$ which is regular in codimension one and let $j: U \inj X$ denote the 
inclusion of the regular locus of $X$. Then the map
$\theta_X: \sZ_0(X) \to \pi^{\rm ab}_1(U)$ kills the group of rational
equivalences to induce a map
\[
\theta_X: \CH_0(X) \to \pi^{\rm ab}_1(U).
\]
which restricts to a map $\theta^0_X: A_0(X) \to \pi^{\rm ab}_1(U)^0$.
\end{prop}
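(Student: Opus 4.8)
The plan is to reduce the statement about rational equivalences on a $d$-dimensional scheme to the already-established surface case (Theorem~\ref{thm:Main-Surf}) by means of the Cartier curve that witnesses the rational equivalence. Concretely, suppose $\alpha = (f)_C \in \sR_0(X)$ for some irreducible Cartier curve $\iota\colon C \inj X$ (by Lemma~\ref{lem:RE} we may assume $C$ is irreducible since $X$ is integral) and some $f \in \sO^\times_{C,D}$ with $D = C \cap X_{\rm sing}$. Because $X$ is regular in codimension one and $C$ meets $X_{\rm sing}$ properly, the curve $C$ passes only through the isolated-codimension-$\ge 2$ part of $X_{\rm sing}$, and away from these finitely many points $C$ already lies in $U$. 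The idea is to find a closed integral surface $Y \subseteq X$ containing $C$ such that $Y$ has only isolated singularities, $C$ remains a Cartier curve on $Y$, and $Y_{\rm sing} \subseteq X_{\rm sing}$; then $(f)_C$ is a rational equivalence on $Y$, so by Theorem~\ref{thm:Main-Surf} the surface reciprocity map $\theta_Y$ kills it, and functoriality (Lemma~\ref{lem:Rec-Cov}) applied to the inclusion $Y \inj X$ gives $\theta_X((f)_C) = (\iota_Y)_*\theta_Y((f)_C) = 0$, where $\iota_Y\colon Y_{\rm reg} \to U$ is the induced map on regular loci.

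First I would make precise the construction of such a surface $Y$. Embedding $X$ in some $\P^N$, one cuts by $d-2$ general hyperplanes through no point of $X_{\rm sing}$ but containing no component forced by $C$; more carefully, using Bertini-type theorems over the finite field $k$ (as announced in the Outline), one arranges that a general complete intersection of $X$ with hyperplanes containing $C$ — or rather a general member of a linear system that contains $C$ and is otherwise generic — is an integral surface $Y$ with $C \subseteq Y$, with $Y$ smooth away from $C \cup X_{\rm sing}$, and with $Y_{\rm sing}$ finite (so that $Y$ has only isolated singularities) and contained in $X_{\rm sing}$. Since the rational function $f$ on $C$ and its divisor $(f)_C$ live intrinsically on $C$, and $C \inj Y$ is again a Cartier curve (local complete intersection at the finitely many singular points, proper intersection with $Y_{\rm sing}$), we get $(f)_C \in \sR_0(Y)$. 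I would also need $Y$ geometrically integral, or at least reduce to that case by a base change argument, in order to invoke Theorem~\ref{thm:Main-Surf} directly; alternatively one checks that the surface case of the reciprocity statement holds for integral (not necessarily geometrically integral) projective surfaces with isolated singularities by the same proof, which only used $\CH_0(Y) \cong H^2(Y,\sK^M_{2,Y})$ and the results of Kato--Saito.

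The restriction to $A_0(X)$ is then formal: $\theta_X$ sends a closed point $[x] \in U$ to $\iota^x_*(F_x)$, whose image in $\wh\Z$ under the projection $\pi^{\rm ab}_1(U) \to \pi^{\rm ab}_1(X) \to \wh\Z$ of \eqref{eqn:Fund} equals $[k(x):k] = \deg_X([x])$; hence $\theta_X$ carries $A_0(X) = \ker(\deg_X)$ into $\ker\big(\pi^{\rm ab}_1(U) \to \wh\Z\big) = \pi^{\rm ab}_1(U)^0$, giving $\theta^0_X$.

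The main obstacle I anticipate is the Bertini step: over a finite field one cannot simply take a general hyperplane section, and one must use the Poonen-type / Gabber-type Bertini theorems with care to simultaneously guarantee that (i) $Y$ is integral, (ii) $Y$ contains the prescribed curve $C$, (iii) $Y$ is regular in codimension $\ge 1$ with only finitely many singular points, all of which lie in $X_{\rm sing}$, and (iv) $C$ stays Cartier on $Y$. Ensuring all four conditions together — especially controlling the singularities of $Y$ along $C$ and at points of $X_{\rm sing}$ — is the delicate part, and is presumably where the hypothesis that $X$ is regular in codimension one is used. A convenient alternative, which I would pursue if the direct Bertini argument proves awkward, is to not move to a surface but instead observe that the group of rational equivalences is already generated by curves $C$ disjoint from $X_{\rm sing}$ entirely (using regularity in codimension one together with a moving argument for $C$ within $X$), reducing to the case $C \subseteq U$ where $\theta_X((f)_C) = 0$ follows from the reciprocity law on the smooth locus $U$ itself, i.e. from the classical fact that the idele-theoretic reciprocity map on a smooth variety kills principal divisors on curves — this is in the spirit of \cite[Proposition~3.6]{KS-2} used in the surface case, now applied on $U$ directly.
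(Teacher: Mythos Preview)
Your overall strategy---reduce to lower dimension via a Bertini-type hypersurface section containing the curve, then invoke Lemma~\ref{lem:Rec-Cov}---matches the paper's, and you rightly flag the Bertini step as the crux. But there is a genuine gap. The paper, like your ``alternative'', begins by reducing to an irreducible curve $C \subset U$ (citing the definition of Cartier curves, regularity in codimension one, and Lemma~\ref{lem:RE}); this is the starting point, not a fallback. The substantive missing ingredient is what comes next: even with $C \subset U$, the curve may be singular, and Poonen's theorem \cite{Poonen-3} on smooth hypersurface sections containing a prescribed closed subscheme requires that subscheme to be smooth inside the smooth ambient locus. Your condition that the section $Y$ be smooth along $C$ therefore cannot be arranged directly, and without it $Y_{\rm sing}$ need not lie in $X_{\rm sing}$. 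The paper resolves this by first performing a finite sequence of point blow-ups $\pi\colon X' \to X$ with centers in $C_{\rm sing} \subset U$, chosen so that the strict transform $C'$ is the normalization of $C$, hence smooth; since $\pi$ is an isomorphism near $X_{\rm sing}$, the scheme $X'$ is still regular in codimension one with $X'_{\rm reg} = \pi^{-1}(U)$, and now Poonen's theorem applies to the smooth curve $C' \subset U'$.

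There is also a structural difference that explains why your conditions (i)--(iv) are hard to secure simultaneously: the paper does not cut all the way to a surface. It takes a \emph{single} hypersurface section $H \subset X'$ of dimension $d-1$ containing $C'$, with $H \cap U'$ smooth, $H$ irreducible (via \cite{Poonen-2}), and $X'_{\rm sing} \not\subset H$; then $H$ is again integral and regular in codimension one with $\dim H_{\rm sing} \le d-3$, and the \emph{inductive hypothesis} applied to $H$ (together with Lemma~\ref{lem:Rec-Cov} for the composite $H \inj X' \to X$) finishes the argument. At no stage is the section required to have isolated singularities---only the standing hypothesis of the proposition---so the difficulty you anticipated of forcing isolated singularities on a surface section simply does not arise.
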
 
\begin{proof}
If $d =2$, then the proposition follows \thmref{thm:Main-Surf}.
We now let $d \ge 3$ and assume that
$\theta_X$ kills rational equivalences if $2 \le \dim(X) \le d-1$.

Let $\iota: C \inj X$ be a closed irreducible curve such that $C \subsetneq U$
and let $f \in k(C)^{\times}$ be a rational function. 
Since $X$ is irreducible and regular in codimension one, it 
suffices to show (see the definition of Cartier curves and 
\lemref{lem:RE}) that $\theta_X \circ \iota_* ({\rm div}(f)) = 0$
in order to show that $\theta_X$ kills rational equivalences.

We assume that $C$ is singular. The other case is easier as we will see.
We claim that there exists a finite sequence of point blow-ups 
$\pi: X' \to X$ with centers in $U$ such that the strict transform $C'$ of $C$ 
is the normalization of $C$. 

To see this, we first observe that if $x \in C$ is a closed point,
then the blow-up of $C$ at $x$ is same as the strict transform of
$C$ in the blow-up of $X$ at $x$. Using this observation and using the
inclusion $C \inj U$, it suffices to show that the normalization
$C^N \to C$ is a composite of point blow-ups of $C$ with centers in 
$C_{\rm sing}$. Let $D$ denote the singular locus of $C$. 

Let us choose a point $x \in D$ and let $\pi_x: C^x \to C$ be the blow-up
of $C$ at $x$. We now notice that $\pi_x$ is a finite birational map
and dominated by the normalization $C^N \to C^x \to C$. We next observe that
$\pi_x$ can not be an isomorphism. This is because of the fact (which one
can easily check) that $\pi_x$ will be an isomorphism if and only if
$x \in C$ is a Cartier divisor. But as $C$ is a curve, this will happen if
and only if $C$ is regular at $x$, which we have assumed it is not.
Since all such blow-ups are dominated by $C^N$ which is finite over $C$, it
follows that this process will end at $C^N$ (otherwise, $\sO_{C^N}$ will not
be a coherent $\sO_C$-module). This proves our claim.

Using the above claim, we choose a composite of point blow-ups
$\pi: X' \to X$ with centers in $U$ such that the strict transform
$C'$ of $C$ is the normalization of $C$.  
Notice that $\pi: X' \to X$ is an isomorphism
over a neighbourhood of $X_{\rm sing}$. In particular, the following hold.
\begin{enumerate}
\item
$X'$ is irreducible and regular in codimension one.
\item
$U' := X'_{\rm reg} = \pi^{-1}(U)$.
\item
$C'$ is smooth and $C' \subsetneq U'$. 
\item
If we let $\nu: C' \to C$ denote the restriction of $\pi$, then we can think
of $f$ as a rational function on $C'$ such that ${\rm div}((f)_C) =
\nu_*({\rm div}((f)_{C'}))$ (see \cite{Fulton}).
\end{enumerate}

We embed $X' \inj \P^N_k$ as a closed subscheme for some $N \gg 1$.
By \cite[Theorem~1.1]{Poonen-3}, we can find a hypersurface section
$H \subsetneq X'$ containing $C'$ such that $H \cap U'$ is smooth
and $X'_{\rm sing} \not\subset H$.
%If $dim(X_{\rm sing} = 0$, then we do not need this condition.
By \cite[Corollary~1.4]{Poonen-2}, we can also assume that $H$ is irreducible.

Since $H$ is a hypersurface section of  $X'$, 
it must be singular along $X'_{\rm sing}$.
In particular, $H_{\rm sing} = X'_{\rm sing} \cap H$. As 
$X'_{\rm sing} \not\subset H$, we see that $\dim(H \cap X'_{\rm sing}) =
\dim(X'_{\rm sing}) - 1 \le d-3$. We conclude that
$H$ is a hypersurface section of $X'$ such that $H_{\rm sing}$ has
dimension at most $d-3$, $H_{\rm reg} = H \cap U'$ and
$C' \subsetneq H_{\rm reg}$. In particular, $H$ is regular in codimension one.
Let $\pi': H \inj X' \to X$ denote the composite map. 
We now apply \lemref{lem:Rec-Cov} to $\pi':H \to X$ to get
\[
\theta_X({\rm div}(f)_C) = \theta_X \circ \pi'_*({\rm div}(f)_{C'}) =
\pi'_* \circ \theta_{H}({\rm div}(f)_{C'}) {=}^{\dagger} 0,
\]
where ${=}^{\dagger}$ follows by the induction hypothesis.
This shows that $\theta_X$ kills rational equivalences.
Finally, it is clear from the construction of $\theta_X$ 
that it induces a map $\theta^0_X: A_0(X) \to \pi^{\rm ab}_1(U)^0$.
This finishes the proof. 
\end{proof}

\subsection{Conclusion of the Proof of  \thmref{thm:CFT-Sing}}
\label{sec:Final}
We now complete the proof of \thmref{thm:CFT-Sing}. We are only left with
showing that $\theta^0_X$ is an isomorphism of finite groups if $X$
has only isolated singularities. It will then follow from
~\eqref{eqn:B-formula-0} that $\theta_X$ is injective with dense image. 
We shall prove the isomorphism of $\theta^0_X$ with the aid of the
following Bertini type theorem over finite fields, which is 
our third main step.

\begin{thm}$($\cite[Theorem~1.3]{Poonen-2}$)$\label{thm:Poonen}
Let $U \inj \P^N_k$ be a smooth and geometrically integral locally closed 
subscheme of dimension at least three.
Let $T_1, T_2 \subsetneq \P^N_k$ be two disjoint finite sets of closed points. 
Then for all sufficiently large $d \gg 1$, there exist hypersurfaces 
$H \subsetneq \P^N_k$ of degree $d$ such that $H \cap T_1 = \emptyset$,
$T_2 \subsetneq H$ and $H \cap U$ is a smooth and geometrically integral
hypersurface section of $U$.
\end{thm}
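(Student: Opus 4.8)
The plan is to prove this by Poonen's closed point sieve, combining three inputs: smoothness of a general hypersurface section, the finitely many (essentially linear) constraints coming from $T_1$ and $T_2$, and geometric integrality of the section. Write $S=\F_q[x_0,\dots,x_N]$, let $S_d\subseteq S$ be the homogeneous part of degree $d$, and for $f\in S_d$ let $H_f\subseteq\P^N_k$ be its zero scheme; for a set $\mathcal P$ of homogeneous polynomials let $\mu(\mathcal P)=\lim_{d\to\infty}|\mathcal P\cap S_d|/|S_d|$ be its density (with the corresponding upper and lower densities when existence of the limit is not yet known). Put $m=\dim U$. It suffices to show that for all $d\gg1$ the set of $f\in S_d$ with $H_f\cap T_1=\emptyset$, $T_2\subseteq H_f$, and $H_f\cap U$ smooth and geometrically integral of dimension $m-1$ has positive density, since it is then in particular non-empty.

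First I would dispose of the conditions at $T:=T_1\cup T_2$, a finite set of closed points. By Serre vanishing there is a $d_1$ with $H^1(\P^N_k,\mathcal I_T(d))=0$ for all $d\ge d_1$, so that the evaluation map $S_d\to\bigoplus_{P\in T}k(P)$ is surjective. Hence for $d\ge d_1$ the requirements $f(P)\neq0$ for $P\in T_1$ and $f(P)=0$ for $P\in T_2$ are satisfied by a fraction $\prod_{P\in T_1}(1-q^{-\deg P})\cdot\prod_{P\in T_2}q^{-\deg P}>0$ of $S_d$, and — the key point — they constrain $f$ only through its image in the fixed finite-dimensional quotient $\bigoplus_{P\in T}k(P)$; that is, they prescribe the $0$-jet of $f$ at each point of $T$ to lie in a fixed non-empty subset.

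The heart of the argument is the smoothness estimate, which is Poonen's. For a closed point $P\in U$, whether $H_f\cap U$ is singular at $P$ depends only on the image of $f$ in $\sO_{U,P}/\fm_P^2$ (equivalently, on its $1$-jet along $U$ at $P$), and the bad images --- namely $0$ --- form a fraction $q^{-(m+1)\deg P}$ of all of them, since $\dim_{k(P)}\sO_{U,P}/\fm_P^2=m+1$. One partitions the closed points of $U$ by degree into a low range ($\deg P<d/(m+1)$), a medium range, and a high range. For the low range, a Chinese-remainder and Taylor-expansion argument shows that the density of $f$ nonsingular on $U$ at every low-degree point converges, as the cutoff grows, to the positive Euler product $\prod_{P\in U\ \mathrm{closed}}(1-q^{-(m+1)\deg P})=\zeta_U(m+1)^{-1}$; for the medium and high ranges one shows the density of $f$ singular on $U$ at some point in that range tends to $0$ as $d\to\infty$, the high range requiring Poonen's device of perturbing $f$ by small multiples of low powers of the coordinates to force the needed independence. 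Since the $T$-conditions of the previous paragraph pin down only $0$-jets at finitely many points, I would run this sieve in its ``prescribed Taylor conditions'' form: a $0$-jet constraint and a $1$-jet constraint at the same point are compatible, so the density of $f$ that simultaneously meet the $T$-conditions and have $H_f\cap U$ smooth of dimension $m-1$ equals the positive number $\zeta_U(m+1)^{-1}\cdot\prod_{P\in T_1}(1-q^{-\deg P})\cdot\prod_{P\in T_2}q^{-\deg P}$.

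It remains to secure geometric integrality. For $d\gg1$ the section $H_f\cap U$ is smooth, hence reduced, and a smooth connected scheme over an algebraically closed field is irreducible; so $H_f\cap U$ is geometrically integral as soon as it is geometrically connected. Fixing a projective closure $\overline U\subseteq\P^N_k$ of $U$, the exact sequence $0\to\sO_{\overline U}(-d)\to\sO_{\overline U}\to\sO_{\overline U\cap H_f}\to0$ together with Serre vanishing $H^1(\overline U,\sO_{\overline U}(-d))=0$ for $d\gg1$ gives $H^0(\overline U\cap H_f,\sO)=k$, so $\overline U\cap H_f$ is geometrically connected; and then the density-zero exclusion of geometrically reducible smooth sections in Poonen's argument --- valid because $\dim U\ge2$ --- shows that the dense open piece $H_f\cap U$ is itself geometrically integral, without changing the density count. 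Putting the three paragraphs together yields positive density for all $d\gg1$, hence the required $H$. The one real obstacle is the sieve itself: the low-degree Euler product together with the vanishing of the density of sections singular at a point of large degree is precisely the technical core of \cite{Poonen-2} and admits no shortcut.
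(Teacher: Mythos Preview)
Your overall strategy coincides with the paper's: impose the conditions at $T_1$ and $T_2$ as prescribed local (Taylor) data, invoke Poonen's closed-point sieve to get smoothness of $H\cap U$ with positive density compatible with those local constraints, and then appeal to the Charles--Poonen irreducibility result to upgrade to geometric integrality. The paper does exactly this, but as a pure black-box application: it packages the $T_1$-condition as $\sU_P=(\wh{\sO}_P)^{\times}$ and the $T_2$-condition as $\sU_Q=\fm_Q$, cites \cite[Theorem~1.3]{Poonen-2} once for smoothness together with these local constraints, and then cites \cite[Corollary~1.4]{Poonen-2} once for geometric integrality. You instead re-sketch the sieve (low/medium/high decomposition, the Euler product $\zeta_U(m+1)^{-1}$, etc.), which is correct but superfluous here since the statement is itself attributed to \cite{Poonen-2}; the paper's two-line citation buys brevity, your version buys self-containedness.

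One genuine slip: the claim ``Serre vanishing $H^1(\overline U,\sO_{\overline U}(-d))=0$ for $d\gg1$'' is not Serre vanishing (which concerns large \emph{positive} twists) and need not hold for an arbitrary projective closure $\overline U$, which can be badly singular and far from arithmetically Cohen--Macaulay. Fortunately this step is not load-bearing: even granting $H^0(\overline U\cap H_f,\sO)=k$, geometric connectedness of the closure does not by itself force geometric connectedness of the open piece $U\cap H_f$, and you correctly fall back on the Charles--Poonen density-zero exclusion of geometrically reducible sections (valid since $\dim U\ge 2$) to finish. I would simply delete the $H^0$ argument and go straight to that citation, exactly as the paper does.
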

\begin{proof}
For a closed point $P \in \P^N_k$, let $\wh{\sO}_P$ denote the completion
of the local ring of $\P^N_k$ at $P$. For $P \in T_1$, let
$\sU_P = ({\wh{\sO}_P})^{\times}$ and for $Q \in T_2$, let
$\sU_Q = \fm_Q$, the maximal ideal of $\wh{\sO}_Q$.
Then for each $P \in T_1 \cup T_2$, the set $\sU_P \subsetneq \wh{\sO}_P$ is
a union of cosets of $\fm_P$. Moreover, $\fm_P$ has finite index in
$\wh{\sO}_P$. We can now apply \cite[Theorem~1.3]{Poonen-2} (see its 
proof) to conclude that for all sufficiently large $d \gg 1$, there exist
hypersurfaces $H = H(f) \subsetneq \P^N_k$ which have the property that
$x_{j_P}^{-e_P}f \in \sU_P$ for each $P \in T_1 \cup T_2$ and $H \cap U$ is 
smooth. Here, $x_{j_P}$ is a non-vanishing coordinate of $\P^N_k$ at $P$ so that
$x_{j_P}^{-e_P}f \in \sO_P$. Our conditions at the points of
$T_1 \cup T_2$ imply that $T_1 \cap H = \emptyset$ and
$T_2 \subsetneq H$.
Using \cite[Corollary~1.4]{Poonen-2}, we can also assume that $H \cap U$
is geometrically integral. This finishes the proof.
\end{proof}

{\sl Proof of \thmref{thm:CFT-Sing}:}
Let $X$ be a geometrically integral projective scheme over $k$ of 
dimension $d \ge 2$ with isolated singularities.
If $d =2$, \thmref{thm:CFT-Sing} follows from \thmref{thm:Main-Surf}
and \propref{prop:Nor-fin}.
We now let $d \ge 3$ and assume that $\theta^0_X$ is
an isomorphism of finite groups if $2 \le  \dim(X) \le d-1$.

Let $p:X' \to X$ denote the normalization map and set $S' = p^{-1}(X_{\rm sing})$,
and $U = X_{\rm reg}$. We can then identify $U$ as 
the open subset $X' \setminus S'$. Notice that $S'$ is a finite set since
$X$ has only isolated singularities. 
%It is easy to see that $p$ induces a pull-back map $p^*: \CH_0(X) \to \CH_0(X')$.

We now embed $X' \inj \P^N_k$ as a closed subscheme for some $N \gg 1$. 
To show that $\theta^0_X$ is injective, let $\alpha \in A_0(X)$ be a 0-cycle 
such that $\theta^0_X(\alpha) = 0$. Since the support of $p^*(\alpha)$ and
$S'$ are two disjoint finite subsets of closed points in $X'$,
we can apply \thmref{thm:Poonen} to get a geometrically integral 
hypersurface section $Y = H \cap X'$ of $X'$ which is disjoint from 
$S'$, contains the support of $p^*(\alpha)$ and $H \cap U$ is smooth.
In particular, $Y \subsetneq U$ is actually a closed subscheme of $X$
and $\alpha$ is a 0-cycle of degree zero on $Y$. 

Applying Lemmas~\ref{lem:Rec-Cov} and ~\ref{lem:PF-sing} 
to the inclusion $\pi:Y \inj U \inj X$, we get a commutative diagram:
\begin{equation}\label{eqn:Final-0}
\xymatrix@C1pc{
A_0(Y) \ar[r]^{\theta^0_Y} \ar[d]_{\pi_*} & \pi^{\rm ab}_1(Y)^0 \ar[d]^{\pi_*} 
\\
A_0(X) \ar[r]_{\theta^0_X} & \pi^{\rm ab}_1(U)^0.}
\end{equation} 
 
We claim that the map $\pi_*: \pi^{\rm ab}_1(Y)^0 \to \pi^{\rm ab}_1(U)^0$
is an isomorphism. Since $X'$ is normal of dimension at least two,
it has depth at least two at each of its closed points.
Since $Y$ is a hypersurface section of $X'$ which is contained in $U$
(which is an open subset of $X'$)
and since $d \ge 3$, we see that $X'$ has depth at least three at each
closed point of $Y$. We conclude from 
\cite[Expos{\'e}~XII, Corollaire~3.5]{SGA-2} that the map
$\pi_1(Y) \to {\underset{W}\varprojlim} \ \pi_1(W)$ is an isomorphism,
where the limit is taken over all open neighborhoods of $Y$ contained
in $U$. It therefore suffices to show that the map $\pi_1(W) \to \pi_1(U)$
is an isomorphism for each such $W$.

Set $Z' = U \setminus W$ and let $Z$ denote its closure in $X'$. Then 
$Z \inj \P^N_k$ is a closed subscheme disjoint from $Y = X' \cap H$ and
hence from $H$. But this can happen only if $\dim(Z) \le 0$.
If $Z = \emptyset$, we are done. If $\dim(Z) = 0$, then 
$W \inj U$ is an open immersion of smooth schemes such that
$U \setminus W$ has codimension $d \ge 2$. It follows from the
Zariski-Nagata theorem (see \cite[Expos{\'e}~X, Th{\'e}or{\`e}me~3.1]{SGA-1})
that the map $\pi_1(W) \to \pi_1(U)$ must be an isomorphism and this
proves the claim.

Since $Y$ is smooth projective and geometrically integral, it follows
from \cite[Theorem~1]{KS} that $\theta^0_Y$ is an isomorphism.
Combining this with the above claim, it follows that the map
$\pi_* \circ \theta^0_Y$ is an isomorphism. It follows that 
$\alpha$ dies as a class in $A_0(Y)$ and hence it dies in $A_0(X)$.
This shows that $\theta^0_X$ is injective. Furthermore, the
isomorphism of $\pi_* \circ \theta^0_Y = \theta^0_X \circ \pi_*$ also
shows that $\theta^0_X$ is surjective as well.

Lastly, the finiteness of $A_0(X)$ and $\pi^{\rm ab}_1(U)^0$ follows because
we have shown that all arrows in ~\eqref{eqn:Final-0} are isomorphisms
and the groups on the top are finite by induction.
This completes the proof of \thmref{thm:CFT-Sing}.
$\hfill \square$

\vskip .4cm

{\sl Proof of \corref{cor:finite-Chow}:}
The case of curve is easy because the maps $A_0(X) \to A_0(X^N) \to 
{\rm Alb}_X(k)$ are surjective maps of finite groups. If $X$ is a
surface with isolated singularities, we can assume it to be normal by
\propref{prop:Surface-0}. If $\wt{X} \to X$ is a resolution of singularities,
we have the maps $A_0(X) \to A_0(\wt{X}) \to {\rm Alb}_X(k)$.
The first map is clearly surjective and the second map is surjective by
\cite[Proposition~9]{KS}. The finiteness of these groups is already shown
before.

If $d \ge 3$, we can find a general hypersurface section $Y = H \cap X$ as
in the proof of \thmref{thm:CFT-Sing}. It is shown in
~\eqref{eqn:Final-0} that the map $A_0(Y) \to A_0(X)$ is an isomorphism.
On the other hand, it follows from \cite[Chapter~8, \S~2, Theorem~5]{Lang-2}
that the map ${\rm Alb}_Y \to {\rm Alb}_X$ is an isomorphism.
The induction hypothesis now finishes the proof.
$\hfill \square$

\vskip .4cm

{\sl Proof of \thmref{thm:B-formula}:}
Let $X$ be as in \thmref{thm:B-formula}. The $d \le 1$ case follows from
\propref{prop:0-cyc-curve}. So we assume $d \ge 2$.
\propref{prop:Milnor-res-*} says that the map
$H^d(X, \sK^M_{d,X}) \to H^d(X, \sK_{d,X})$ is an isomorphism. It also says that
there is a commutative diagram with exact rows:
\begin{equation}\label{eqn:B-formula-0}
\xymatrix@C1pc{
0 \ar[r] & A_0(X) \ar[r] \ar[d] & \CH_0(X) \ar[r] \ar[d] & \Z \ar@{=}[d] 
\ar[r] & 0 \\
0 \ar[r] & H^d(X, \sK^M_{d,X})^0 \ar[r] \ar[d] & H^d(X, \sK^M_{d,X}) \ar[r] 
\ar[d] &  \Z \ar@{^{(}->}[d] \ar[r] & 0 \\
0 \ar[r] & \pi^{\rm ab}_1(U)^0 \ar[r] & \pi^{\rm ab}_1(U) \ar[r] & 
\wh{\Z} \ar[r]
& 0.}
\end{equation}

Note that the maps $\CH_0(X) \to H^d(X, \sK^M_{d,X}) \to \Z$ are the
composite $\CH_0(X) \to H^d(X, \sK^M_{d,X}) \to \CH^F_0(X) \to \Z$
(see \propref{prop:Milnor-res-*}), where the last map is the degree map
induced by the projective push-forward $\CH^F_0(X) \to \CH^F_0(\Spec(k))$.  

\thmref{thm:CFT-Sing} says that the the composite vertical arrow on the left
is an isomorphism. It follows from \corref{cor:Surface-0-*1}
and its obvious higher dimensional analogue that the map
$H^d(X, \sK^M_{d,(X, S)}) \to H^d(X, \sK^M_{d,X})$ is an isomorphism for
all closed subschemes $S \inj X$ supported on $X_{\rm sing}$.
Combining this with \cite[Theorem~6.2]{Raskind}, we see that
the bottom vertical arrow on the left is also an isomorphism.
It follows that the left vertical arrow on the top is an isomorphism
and this implies that the middle vertical arrow on the top must also be
an isomorphism.
$\hfill \square$
 
\vskip .2cm

\noindent\emph{Acknowledgements.}
The author would like to thank N. Fakhruddin for some valuable conversation 
during the preparation of this note.


\begin{thebibliography}{99}
\bibitem{Binda} F. Binda, {\sl Algebraic cycles with modulus and relative
$K$-theory\/}, preprint, (2014). \

\bibitem{BK} F. Binda, A. Krishna, {\sl Algebraic cycles and $K$-theory with 
modulus\/}, preprint, (2014). \

\bibitem{BS-1} J. Biswas, V. Srinivas, {\sl Roitman's theorem for singular
projective varieties\/}, Compositio Math., {\bf 119}, (1999), 213--237. \
\bibitem{Bloch-M} S. Bloch, {\sl Algebraic Cycles and Higher K-theory\/},
Adv. Math., {\bf 61}, (1986), 267-304. \

\bibitem{Poonen-2} F. Charles, B. Poonen, {\sl Bertini irreducibility theorems 
over finite fields\/}, arXiv:1311.4960 (2014). \


\bibitem{EM}  P. Elvaz-Vincent, S. Muller-Stach,  {\sl Milnor $K$-theory
of rings, higher Chow groups and applications\/}, Invent. Math.,
{\bf 148}, (2002), 177-206. \ 

\bibitem {Fulton} W. Fulton, {\sl Intersection theory\/}, Second Edition, 
Ergebnisse der Mathematik und ihrer Grenzgebiete 3,
Folge. A Series of Modern Surveys in Mathematics, 
{\bf 2}, Springer-Verlag, Berlin, 1998. \

\bibitem{SGA-2} A. Grothendieck, {\sl S{\'e}minaire de G{\'e}ometri{\'e} 
Alg{\'e}brique du Bois Marie:
cohomologie locale faisceaux coh{\'e}rents et th{\'e}or{\'e}mes de Lefschetz 
locaux et globaux\/}, Advanced Studies in Pure Mathematics {\bf 3}, 
North-Holland, Amsterdam, (1968). \

\bibitem{SGA-1} A. Grothendieck, {\sl S{\'e}minaire de G{\'e}ometri{\'e} 
Alg{\'e}brique: revetements {\'e}tale et groupe fondamental\/},
SGA~1, Lecture Notes in Mathematics, {\bf 224}, Springer-Verlag, 1971. \

\bibitem{Hart} R. Hartshorne, {\sl Algebraic Geometry\/}, 
Graduate text in Mathematics, {\bf 52}, Springer, (1997). \

\bibitem{Kato} K. Kato, {\sl Milnor $K$-theory and the Chow group of
zero cycles\/}, Applications of Algebraic $K$-Theory to Algebraic Geometry and
Number Theory, Contemp. Math., {\bf 55}, Amer. Math. Soc., Providence, RI,
(1986), 241-254. \ 

\bibitem{KS} K. Kato, S. Saito, {\sl Unramified class field theory of
arithmetic surfaces\/}, Ann. Math., {\bf 118}, No. 2, (1983), 241-275. \

\bibitem{KS-2} K. Kato, S. Saito, {\sl Global class field theory of arithmetic
schemes\/}, Applications of Algebraic $K$-Theory to Algebraic Geometry and
Number Theory, Contemp. Math., {\bf 55}, Amer. Math. Soc., Providence, RI,
(1986), 255-331. \  

\bibitem{KeS} M. Kerz, S. Saito, {\sl Chow group of 0-cycles with modulus
and higher dimensional class field theory\/}, arXiv:1304.4400v2, (2014). \

\bibitem{AK-1} A. Krishna, {\sl Zero-cycles on a threefold with isolated
singularities\/}, J. reine angew. Math., {\bf 594}, (2006), 93-115. \

\bibitem{AK-2} A. Krishna, {\sl Zero-cycles on singular surfaces\/},
J. K-Theory, {\bf 4}, (2009), 101-143. \

\bibitem{KSri} A. Krishna, V. Srinivas, {\sl Zero cycles and $K$-theory
on normal surfaces\/}, Ann. of Math., {\bf 156}, no. 2, (2002),
155--195. \


\bibitem{Lang-1} S. Lang, {\sl Unramified class field theory over
function fields in several variables\/}, Ann. Math., {\bf 64}, No. 2, (1956),
285-325. \

\bibitem{Lang-2} S. Lang, {\sl Abelian Varieties\/}, Interscience Tracts in 
Pure and Applied Math., {\bf 7}, Interscience Publishers, (1959). \ 

\bibitem{LW} M. Levine, C. Weibel, {\sl Zero cycles and complete intersections
on singular varieties\/}, J. Reine Angew. Math., 
\textbf{359}, (1985), 106--120. \

\bibitem{Milnor} J. Milnor,  {\sl Introduction To Algebraic K-Theory\/}, 
Annals of Math Studies, {\bf 72}, Princeton, (1971). \


\bibitem{Nestler} A. Nestler, {\sl $SK_1$ of affine curves over finite
fields\/}, J. Algebra, {\bf 225}, (2000), 943-946. \ 

\bibitem{PW-1} C. Pedrini, C. Weibel, {\sl $K$-theory and Chow groups on
singular varieties\/}, Contemp. Math., {\bf 55}, Part I, (1986), 339-370. \

\bibitem{Poonen-1} B. Poonen, {\sl Bertini theorems over finite fields\/},
Ann. Math., {\bf 160}, (2004), 1099-1127. \

\bibitem{Poonen-3} B. Poonen, {\sl Smooth hypersurface sections containing
a given subscheme over a finite field\/}, Math. Res. Lett., {\bf 15}, No. 2,
(2008), 265-271. \ 

\bibitem{Raskind} W. Raskind, {\sl Abelian class field theory of arithmetic
schemes\/}, Proc. Sym. Pure Math., {\bf 58}, No. 1, (1995), 85-187. \

\bibitem{SS} A. Schmidt, M. Spie\ss, {\sl Singular homology and class field
theory of varieties over finite fields\/}, J. Reine Angew. Math., {\bf 527},
(2000), 13-36. \

\bibitem{Srinivas} V. Srinivas, {\sl Algebraic cycles on singular varieties\/},
Proceedings of the International Congress of Mathematicians, {\bf 2},
Hindustan Book Agency, New Delhi, (2010), 603-623. \ 


\bibitem{TT}  R. Thomason, T. Trobaugh,   {\sl Higher Algebraic
K-Theory Of Schemes And Of Derived Categories\/},
The Grothendieck Festschrift III, Progress in Math., {\bf 88}, Birkhauser. \


\end{thebibliography}
\end{document}